\title{An invitation to multisymplectic geometry}
\author{Leonid Ryvkin\thanks{Fakult\"at f\"ur Mathematik, Ruhr-Universit\"at Bochum, Universit\"atsstr. 150, 44801 Bochum, Germany; Leonid.Ryvkin@rub.de}~ and Tilmann Wurzbacher\thanks{Universit\'{e} de Lorraine, C.N.R.S., IECL, F-57000 Metz, France; tilmann.wurzbacher@univ-lorraine.fr}}
\newtheorem{definition}{Definition}[section]
\newtheorem{remark}[definition]{Remark}
\newtheorem{lemma}[definition]{Lemma}
\newtheorem{theorem}[definition]{Theorem}
\newtheorem{corollary}[definition]{Corollary}
\newtheorem{example}[definition]{Example}
\newtheorem{construction}[definition]{Construction}
\newtheorem{proposition}[definition]{Proposition}
\newtheorem{propdef}[definition]{Proposition / Definition}
\newtheorem*{theorem*}{Theorem}
\newtheorem*{definition*}{Definition}
\newtheorem*{ack*}{Acknowledgements}
\begin{document}
\maketitle

\begin{abstract}
In this article we study multisymplectic geometry, i.e., the geometry of manifolds with a non-degenerate, closed differential form. First we 
describe the transition from Lagrangian to Hamiltonian classical field theories, and then 
we reformulate the latter in ``multisymplectic terms''. 
Furthermore, we investigate basic questions on normal forms of multisymplectic manifolds, notably the questions wether and when 
Darboux-type theorems hold, and ``how many''  diffeomorphisms certain, important classes of multisymplectic manifolds
possess. Finally, we survey recent advances in the area of symmetries and conserved quantities on multisymplectic manifolds. 
\end{abstract}

\noindent {\bf MSC-classification (2010):} 53D05, 70S05, 37C05, 53D20, 37K05.\\

\noindent {\bf Keywords:} multisymplectic geometry, Lagrangian and Hamiltonian field theories, Darboux-type theorems, groups of multisymplectic diffeomorphisms, homotopy comoment

\tableofcontents

\section{Introduction}
The quest for a ``Hamiltonian'' formulation of classical field theory has a long history, going back at least to Volterra's work at the end of the 19th century (see \cite{MR2882772} and \cite{MR2892558} for some stimulating historical remarks). The main advantage of such a formalism, as opposed to the ``Lagrangian'' approach is that it replaces the problem of finding critical points of a real-valued functional on a space of maps by the finite-dimensional problem of finding geometrically defined ``integral curves'' or ``vortex curves'', though in a field-theoretic context the latter objects typically have $n$-dimensional sources. Analogous to the case of mechanics, the study of general ``(multi)phase spaces'' (i.e. symplectic resp. multisymplectic manifolds) is not only crucial to understand the dynamics but is also of great independent interest, e.g. for the quantization of mechanics or field theories.\\

This article has a two-fold goal: First, we want to explain in a few pages the transition from Lagrangian classical field theory to dynamics defined by a real-valued function on a multisymplectic manifold. We are lead by the principle, that the Hamiltonian approach to multisymplectic dynamics should be formulated as simple and universal as it is in the case of Hamiltonian dynamics of symplectic manifolds. The second goal of the paper is to ``invite'' the reader to the study of multisymplectic manifolds via a ``guided tour'' that is, in parts, a survey, but which exhibits also many unpublished results and new examples, elucidating properties discussed in the text.\\

Let us describe now the content of this paper in more detail. As alluded to above the first section reviews the Hamiltonian approach to time-dependent mechanics and the transition from the Lagrangian variational approach to a ``Hamiltonian functional'' $\mathbb H$, whose critical points are the solutions of the given classical field theory. Notably, we explain the relevant ``multiphase spaces'' together with their canonical differential forms rigorously and from scratch. We then give several conditions equivalent to being critical for $\mathbb H$, culminating in the following

\begin{theorem*}[Compare Theorem \ref{fin1thm}]
	Let $(\Sigma,\mathrm{vol}^\Sigma)$ be a manifold of dimension $n$ with a fixed volume form, $\gamma\in\mathfrak {X^n}(\Sigma)$ the multivector field defined by $\iota_{\gamma}(\mathrm{vol}^\Sigma)=1$ and $\Psi:\Sigma\to M(\pi)$ a section of the multiphase space, viewed as a bundle over $\Sigma$. Then $\Psi$ is critival for $\mathbb H$ if and only if $$\forall x\in \Sigma, (\Psi_*)_x(\gamma_x)=X_H(\Psi(x)),$$ where the multivector field $X_{H}$ on $M(\pi)$ fulfills the ``Hamilton-deDonder-Weyl equation'' $\iota_{X_H}\omega=(-1)^ndH$, with $\omega$ being the canonical $(n{+}1)$-form on $M(\pi)$.  
\end{theorem*}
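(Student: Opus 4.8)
\medskip
\noindent\textit{Proof strategy.}\enspace
The plan is to compute the first variation of $\mathbb H$, rewrite its vanishing as a pointwise condition on forms along $\Psi$, and then dualise that condition by means of the volume form. Recall that $\mathbb H(\Psi)=\int_\Sigma\Psi^*\theta$ for sections $\Psi$ of $M(\pi)\to\Sigma$ whose image lies in a fixed regular level set of $H$, say $H^{-1}(0)$. A variation of $\Psi$ through such sections is generated by a vector field $\hat V$ on $M(\pi)$ that is $\pi$-vertical and tangent to $H^{-1}(0)$; by Cartan's formula $\mathcal L_{\hat V}\theta=\iota_{\hat V}\omega+d\,\iota_{\hat V}\theta$, so after pulling back along $\Psi$ and integrating, Stokes' theorem removes the exact term and leaves
\[
\left.\tfrac{d}{ds}\right|_{0}\mathbb H(\Psi_s)=\int_\Sigma\Psi^*\bigl(\iota_{\hat V}\,\omega\bigr).
\]
Since $\Psi^*(\iota_{\hat V}\omega)$ is a top-degree form on $\Sigma$ depending pointwise and $C^\infty(\Sigma)$-linearly on $\hat V$, the fundamental lemma of the calculus of variations gives: $\Psi$ is critical $\iff$ at every $x\in\Sigma$ the form $\Psi^*(\iota_{\hat V}\omega)$ vanishes, for all admissible $\hat V$. (This is presumably one of the equivalences established just before the theorem, so one may start here.)

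Next I would dualise. As $\dim\Sigma=n$, the $n$-vector $\gamma_x$ spans the line $\Lambda^n T_x\Sigma$, so an $n$-form on $T_x\Sigma$ is zero iff its value on $\gamma_x$ is; using $\pi\circ\Psi=\mathrm{id}_\Sigma$ and naturality of the interior product, $\Psi^*(\iota_{\hat V}\omega)(\gamma_x)=(-1)^n(\iota_\chi\omega)(\hat V)$, where $\chi:=(\Psi_*)_x\gamma_x\in\Lambda^n T_{\Psi(x)}M(\pi)$. Two properties of $\chi$ are decisive: it is \emph{decomposable} (the push-forward of the top-degree, hence decomposable, multivector $\gamma_x$), which forces $\iota_\chi\omega$ to vanish on the horizontal subspace $(\Psi_*)_x(T_x\Sigma)$; and it is \emph{normalised}, $\iota_\chi(\pi^*\mathrm{vol}^\Sigma)=\iota_{\gamma_x}\mathrm{vol}^\Sigma=1$. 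Criticality forces $\iota_\chi\omega$ to annihilate every $\pi$-vertical vector lying in $\ker dH$; since $dH$ is non-zero on the $\pi$-fibres and the horizontal subspace is contained in $\ker dH$, these two families of vectors together span all of $\ker dH$, so $\iota_\chi\omega\in\mathrm{Ann}(\ker dH)=\mathbb R\cdot dH$. Evaluating $\iota_\chi\omega=c\cdot dH$ on one vector transverse to $\ker dH$ and using $\iota_\chi(\pi^*\mathrm{vol}^\Sigma)=1$ pins down $c=(-1)^n$. Hence criticality is equivalent to $\iota_\chi\omega=(-1)^n dH$ at every point $\Psi(x)$: $(\Psi_*)_x\gamma_x$ solves the Hamilton--deDonder--Weyl equation there, and being decomposable and $\mathrm{vol}$-normalised it is an admissible value of such an $X_H$ along $\Psi$, which is the claim. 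The converse is the same chain of equivalences read backwards: from $(\Psi_*)_x\gamma_x=X_H(\Psi(x))$ one gets $\iota_\chi\omega=(-1)^n dH$ there, hence $\Psi^*(\iota_{\hat V}\omega)=0$ for admissible $\hat V$, hence the first variation vanishes.

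The step needing most care is this ``assembly'' together with the bookkeeping about $X_H$. For $n\ge 2$ the map $Z\mapsto\iota_Z\omega$ on $n$-vectors is far from injective, so $\iota_{X_H}\omega=(-1)^n dH$ does not determine $X_H$; the statement should be read as: there exists a multivector field $X_H$ with $\iota_{X_H}\omega=(-1)^n dH$ whose restriction along $\Psi$ is $(\Psi_*)\gamma$ (the global existence of one such $X_H$ is a minor matter of choosing a section of a nonempty affine bundle). The two normalising conditions on $\chi$ --- decomposability and $\iota_\chi(\pi^*\mathrm{vol}^\Sigma)=1$ --- are exactly what lets the variational condition, which a priori controls $\iota_\chi\omega$ only in the fibre directions of $M(\pi)\to\Sigma$, propagate to all of $T_{\Psi(x)}M(\pi)$; verifying that $\iota_\chi\omega$ and $(-1)^n dH$ are not merely proportional but equal, and keeping the signs straight through the interior-product manipulations (which is where the $(-1)^n$ comes from), is the genuinely delicate point. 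The first variation, Cartan's formula, Stokes' theorem and the fundamental lemma are all routine.
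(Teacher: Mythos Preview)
Your strategy matches the paper's: establish that criticality is equivalent to $\iota_\chi\omega$ (with $\chi=(\Psi_*)_x\gamma_x$) vanishing on $\ker(dH)_{\Psi(x)}$, hence $\iota_\chi\omega=c\,dH$, and then pin down $c=(-1)^n$ by testing against $\partial/\partial p$ --- your use of $\iota_\chi(\pi^*\mathrm{vol}^\Sigma)=1$ is exactly this computation, since $\iota_{\partial/\partial p}\omega=-d^nx$ and $dH(\partial/\partial p)=-1$. The only cosmetic difference is that the paper obtains vanishing on all of $\ker dH$ in one stroke from its preceding equivalence theorem (which already allows arbitrary, not only vertical, test vector fields), whereas you treat the horizontal directions separately via the decomposability of $\chi$; both arguments are correct and amount to the same thing.
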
 

In Section 2 we start with the definition of a multisymplectic manifold and give a long list of examples, before recalling a fundamental result of Martinet ``explaining'' why there are many multisymplectic manifolds. We conclude this section by generalizing Remark \ref{rem:lag} to an arbitrary multisymplectic manifold (compare Definition \ref{defhddw} and Remark \ref{rem-end-2}):
\begin{definition*}
For a given $n$-plectic manifold $(M,\omega)$ and a $k$-form $H$ on $M$, an $(n{-}k)$-vector field $X_H$ solves the ``Hamilton-DeDonder-Weyl equation'' if $$\iota_X\omega=-dH.$$ Furthermore, if $\Sigma$ is an $n{-}k$-dimensional manifold, $\gamma\in \mathfrak X^{n{-}k}(\Sigma)$ and $\Psi:\Sigma\to M$ satisfies $\forall x\in\Sigma$, $(\Psi_*)_x(\gamma_x)=X_H(\Psi(x))$ we call $(\Sigma,\gamma,\Psi)$ a ``Hamiltonian $(n{-}k)$-curve for $H$''.
\end{definition*}

Section 3 is devoted to the question of normal forms in multisymplectic geometry. We first report on the linear case, recently completed by the first author (cf. \cite{1609.02184}). The main point is here, of course, that typically there exist several different equivalence classes of non-degenerate $(n{+}1)$-forms, called ``linear types'' on a given finite-dimensional real vector space. We then introduce the basic 

\begin{definition*}
	A multisymplectic manifold $(M,\omega)$ is called flat, if for all $p$ in $M$, there exists a local diffeomorphism beween $M$ and  $T_pM$, identifying $\omega$ with the constant-coefficient form $\omega_p$ on $T_pM$.
\end{definition*}

We describe in Subsections \ref{sub:symp}-\ref{sub:lie} imporant classes of multisymplectic manifolds and their respective flatness conditions. Notably the results in Sections \ref{sub:new1} and \ref{sub:new2} (Theorems \ref{prodthm} and \ref{clxthm}) are new. In Section \ref{subs:2pl6} we give an elementary construction to obtain 2-plectic structures on $\mathbb R^6$, elucidating the two-fold obstruction to flatness: non-constancy of the linear type and an ``integrability condition'', whose details depend on the linear type (then assumed to be constant). We conclude with a short subsection, \ref{sub:lie}, on the canonical 3-form on a Lie group $G$. In Section \ref{sec:2} (compare Example \ref{exlie}) we explained why $(G,\omega)$ is 2-plectic if $G$ is a semi-simple Lie group. Here we show 

\begin{theorem*}
[cf. Theorem \ref{liegrp}]
	Let $G$ be a simple real Lie group and $\omega$ its canonical three-form. Then $(G,\omega)$ has constant linear type, and $(G,\omega)$ is flat if and only if $G$ is three-dimensional (and $\omega$ then a volume form).
\end{theorem*}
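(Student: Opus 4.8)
\textit{Strategy.} I would split the statement into three parts: (i) constancy of the linear type, (ii) flatness when $\dim G = 3$, (iii) non-flatness when $\dim G > 3$, the last being the real content. Throughout write $\mathfrak g = \mathrm{Lie}(G)$, $\kappa$ for the Killing form, and recall that the canonical three-form is the left-invariant form with $\omega_e(X,Y,Z) = \kappa([X,Y],Z)$, the Cartan--Koszul cocycle; that $\omega$ is closed and $1$-nondegenerate (hence $2$-plectic) for $G$ simple is the content recalled in Example \ref{exlie}. For (i): since $\omega$ is left-invariant, $(dL_g)_e \colon T_eG \to T_gG$ is a linear isomorphism with $(dL_g)_e^{\,*}\omega_g = \omega_e$, so $\omega_g$ has the same linear type as $\omega_e$ for every $g \in G$; thus $(G,\omega)$ has constant linear type. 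For (ii): if $\dim G = 3$ then $\omega$ is a nowhere-vanishing top form, i.e.\ a volume form, and by the classical Darboux theorem for volume forms (or a Moser-type argument) every volume form is locally diffeomorphic to a constant-coefficient one, so $(G,\omega)$ is flat.

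\textit{Connection-theoretic reformulation of flatness.} For (iii) I would pass, following Section \ref{sec:3}, to linear connections: $(M,\omega)$ is flat near $p$ if and only if there is a torsion-free linear connection $\nabla$ defined near $p$ with $\nabla\omega = 0$ and $R^\nabla = 0$ (in flat coordinates the coordinate fields are $\nabla$-parallel and $\omega$ then has constant coefficients, and conversely a flat torsion-free connection has flat coordinates in which $\nabla\omega = 0$ forces the coefficients of $\omega$ to be constant). Now $(G,\omega)$ carries a canonical candidate: the bi-invariant torsion-free connection $\nabla^{1/2}$ with $\nabla^{1/2}_{X^L}Y^L = \tfrac12[X,Y]^L$. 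Since $\omega$ is $\mathrm{ad}$-invariant and its value on left-invariant fields is constant, one checks $\nabla^{1/2}\omega = 0$; on the other hand the standard computation gives $R^{\nabla^{1/2}}(X^L,Y^L)Z^L = -\tfrac14\,[[X,Y],Z]^L$, which is non-zero because $\mathfrak g$ is non-abelian (indeed $[[\mathfrak g,\mathfrak g],\mathfrak g] = \mathfrak g$). So $(G,\omega)$ is flat near $e$ if and only if there is \emph{some} torsion-free $\omega$-compatible connection near $e$ that is flat, and since $\nabla^{1/2}$ is already a torsion-free $\omega$-compatible connection, it suffices to show that such a connection is \emph{unique}.

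\textit{The main obstacle.} Two torsion-free $\omega$-compatible connections differ by a section of $S^2T^*G \otimes TG$ whose value at each point $p$ lies in the first prolongation $\mathfrak{sp}(\omega_p)^{(1)}$ of the stabiliser $\mathfrak{sp}(\omega_p) = \{A \in \mathfrak{gl}(T_pG) : A\cdot\omega_p = 0\}$; hence uniqueness follows once $\mathfrak{sp}(\omega_e)^{(1)} = 0$. The key step --- and the step I expect to be genuinely hard --- is therefore the purely linear-algebraic claim that for a simple real Lie algebra $\mathfrak g$ with $\dim\mathfrak g > 3$ the Cartan--Koszul three-form has vanishing first prolongation; I would extract this from the classification of linear types of three-forms and their stabilisers (cf.\ \cite{1609.02184}), the structural point being that $\mathfrak{sp}(\omega_e)$ reduces to $\mathrm{ad}(\mathfrak g)$, whereupon the prolongation condition $S(X)Y = S(Y)X$ with $S(X) \in \mathrm{ad}(\mathfrak g)$ forces $S = 0$ by an argument using simplicity (e.g.\ that $\mathrm{ad}_{S(X)} = -\{A,\mathrm{ad}_X\}$-type identities leave no room for $S \neq 0$). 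Granting this, a flattening of $(G,\omega)$ near $e$ would produce a flat torsion-free $\omega$-compatible connection, which by uniqueness must equal $\nabla^{1/2}$, contradicting $R^{\nabla^{1/2}} \neq 0$; hence $(G,\omega)$ is not flat when $\dim G > 3$. Finally, one should note that in dimension three this argument correctly breaks down: there $\mathfrak{sp}(\omega_e) = \mathfrak{sl}(3,\mathbb R)$ has $\mathfrak{sp}(\omega_e)^{(1)} = (T_eG)^* \neq 0$ (the projective prolongation), so the compatible torsion-free connection is not unique and flatness is indeed possible, consistently with part (ii).
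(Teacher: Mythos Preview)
Your strategy for (i) and (ii) matches the paper. For (iii) you take a genuinely different route. The paper argues as follows: citing \cite{MR3054295}, for $\dim G>3$ one has $Aut(\mathfrak g,\omega_e)=Aut(\mathfrak g,[\cdot,\cdot])\subset Aut(\mathfrak g,\kappa)$. A flat chart $\phi$ near $e$ would give, for each $g$, the linear map $\theta^L_g\circ(T_g\phi)^{-1}\in Aut(\mathfrak g,\omega_e)\subset Aut(\mathfrak g,\kappa)$; this forces $\phi$ to be simultaneously a flat chart for the bi-invariant pseudo-Riemannian metric $h$ induced by $\kappa$, which is impossible since simple Lie groups have nonzero curvature with respect to $h$. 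So the paper reduces to Riemannian non-flatness directly, without connections or prolongations.

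Your $G$-structure argument is a valid alternative and in fact rests on the same structural input: the identity $\mathfrak{sp}(\omega_e)=\mathrm{Der}(\mathfrak g)=\mathrm{ad}(\mathfrak g)$ is the infinitesimal version of the paper's key citation. What your approach buys is a cleaner conceptual picture (uniqueness of the torsion-free compatible connection forces $\nabla=\nabla^{1/2}$, whose curvature you compute explicitly); what the paper's approach buys is brevity, since it outsources the curvature computation to the known theory of bi-invariant metrics.

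There is, however, a genuine gap where you write ``forces $S=0$ by an argument using simplicity'': you never give this argument, and the parenthetical hint is too vague to count. Here is the missing step. Write $S(X)=\mathrm{ad}_{s(X)}$ and set $T(X,Y,Z)=\kappa([s(X),Y],Z)$. The prolongation condition $[s(X),Y]=[s(Y),X]$ makes $T$ symmetric in its first two slots, while $\mathrm{ad}$-invariance of $\kappa$ makes $T$ antisymmetric in its last two slots; the standard cyclic shuffle
\[
T(X,Y,Z)=T(Y,X,Z)=-T(Y,Z,X)=-T(Z,Y,X)=T(Z,X,Y)=T(X,Z,Y)=-T(X,Y,Z)
\]
then gives $T=0$, hence $[s(X),Y]=0$ for all $X,Y$ by nondegeneracy of $\kappa$, so $s(X)$ is central and $s=0$. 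With this inserted your proof is complete.
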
 

In Section 5 we study the automorphism group of multisymplectic manifolds. In contrast to the groups of diffeomorphisms preserving a symplectic or volume form, the group of multisymplectomorphisms of $(M,\omega)$ tends to be rather ``small'', even if $(M,\omega)$ is flat. We study here notably the question if this group is $k$-transitive $k=1$, $k=2$ or for all $k\in\mathbb N$. Very little seems to be known on these automorphism groups even for simple classes of multisymplectic manifolds. Most of the results and examples of this section seem to be new, but the Theorems \ref{moser} and \ref{lepage}, as well as the ideas of the proof of Theorem \ref{transC} and Lemma \ref{lemfol} are known. The main feature of the study of automorphism groups of multisymplectic manifolds is the preservation of natural distributions or tensors associated to the multisymplectic form, compare Examples \ref{ediff1} and \ref{ediff2}, Remark \ref{rdiff1} and Proposition \ref{pdiff1}. The presence of these unexpected ``invariants'' reduce the size of the symplectomorphism group in a non-trivial manner, leading to uncharted territory. This is even more the case for non-flat multisymplectic manifolds, as is illustrated by the following \\

\begin{theorem*}[cf. Propositions \ref{exclt} and \ref{nflat2}]
	Let $N=\{(x^1,x^2,x^3,x^4,x^5,x^6)\in \mathbb R^6~|~x^2>0\}$ and $\omega^f=  dx^{135} -dx^{146}-dx^{236} + f(x^2)\cdot dx^{245}$, where $f:\mathbb R^{>0}\to \mathbb R^{>0}$ is smooth, and $dx^{ijk}=dx^i\wedge dx^j\wedge dx^k$. Then $(N,\omega^f)$ is multisymplectic and of constant linear type. Furthermore,
	\begin{enumerate}[(i)]
		\item Let $f(x^2)=x^2$, then $(N,\omega^f)$ is non-flat and its multisymplectic diffeomorphism group acts transitively but not 2-transitively on $N$.
		\item Let $f$ satisfy $f|_{]0,1]}=1$ and $f|_{[2,\infty[}(t)=t$, then there
		are open subsets of $N$ where $\omega^f$ is flat resp. non-flat and therefore the group of multisymplectic diffeomorphisms of $(N,\omega)$ can not act transitively on $N$.  
	\end{enumerate}
\end{theorem*}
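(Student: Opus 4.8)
The plan is to analyze the $6$-form-geometry of $\omega^f$ on $N\subseteq\mathbb R^6$ by first establishing what is claimed in the preamble, namely non-degeneracy and constancy of the linear type, and then to extract from the explicit coefficients the natural invariants that a multisymplectic diffeomorphism must preserve. First I would check non-degeneracy pointwise: given the decomposition $\omega^f_p = dx^{135}-dx^{146}-dx^{236}+f(x^2)dx^{245}$, one computes $\iota_v\omega^f_p$ for $v=\sum a^i\partial_i$ and verifies that it vanishes only for $v=0$ as long as $f(x^2)\neq 0$, which holds on $N$. For the constancy of the linear type, I would exhibit at each point a linear isomorphism of $T_pN$ carrying $\omega^f_p$ to a fixed model $2$-plectic form on $\mathbb R^6$ — e.g. rescaling the $x^4,x^5$ (or $x^2,x^4$) coordinates by a power of $f(x^2)$ to absorb the coefficient $f(x^2)$, thereby reducing $\omega^f_p$ to $dx^{135}-dx^{146}-dx^{236}+dx^{245}$; since $f>0$ this rescaling is a genuine isomorphism, and one invokes the classification of linear types cited from \cite{1609.02184} to conclude the type is the constant one.

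For part (i), with $f(t)=t$, the strategy is two-pronged. To show \emph{transitivity} of the multisymplectic diffeomorphism group, I would produce enough explicit multisymplectomorphisms: translations in $x^1,x^3,x^4,x^5,x^6$ obviously preserve $\omega^f$ since the coefficients only depend on $x^2$; to move in the $x^2$-direction one uses a scaling $x^2\mapsto \lambda x^2$ combined with a compensating rescaling of, say, $x^4$ and $x^5$ so that the factor $f(x^2)=x^2$ transforms correctly and the first three monomials are left invariant — concretely a flow of the type $(x^1,\dots,x^6)\mapsto(x^1,e^t x^2, x^3, e^{-t}x^4, x^5, x^6)$ or a similar choice, checked directly on $\omega^f$. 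Together these generate a transitive action on $N$. To show it is \emph{not} $2$-transitive, I would argue via non-flatness: by the Darboux-type discussion of Section 3, $\omega^f$ being flat at $p$ would force a local model isomorphic to the constant form $\omega^f_p$, and I would show the relevant integrability obstruction (the ``integrability condition'' whose shape depends on the fixed linear type, as described in Subsection \ref{subs:2pl6}) is non-zero precisely because $f'=1\neq 0$; hence $(N,\omega^f)$ is genuinely non-flat. Then the obstruction to $2$-transitivity is that a multisymplectomorphism must preserve a naturally associated tensorial/scalar invariant built from $\omega^f$ and its derivatives — for instance a function measuring the ``size'' of the integrability obstruction at a point — and since this invariant is non-constant on $N$ (it varies with $x^2$ because $f(x^2)=x^2$), no multisymplectomorphism can map a pair $(p,q)$ with distinct invariant-values to a pair with equal values, ruling out $2$-transitivity. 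The key is to identify this invariant cleanly from the structure theory, which is where Examples \ref{ediff1}, \ref{ediff2} and Proposition \ref{pdiff1} of Section 5 are used.

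For part (ii), the idea is a localization argument: since $f|_{]0,1]}\equiv 1$, the restriction of $\omega^f$ to the open set $\{0<x^2<1\}$ is the constant-coefficient form $dx^{135}-dx^{146}-dx^{236}+dx^{245}$, which is manifestly flat (it \emph{is} its own constant model). Since $f|_{[2,\infty)}(t)=t$, on the open set $\{x^2>2\}$ the form is exactly the non-flat form of part (i), so $\omega^f$ is non-flat there by the argument above. Therefore flatness of $\omega^f$ fails at some points and holds at others. A multisymplectic diffeomorphism $\phi$ of $(N,\omega^f)$ must carry flat points to flat points — flatness of $\omega$ at $p$ is plainly invariant under local multisymplectomorphisms — so the subset of $N$ where $\omega^f$ is flat is $\phi$-invariant; since this subset is neither empty nor all of $N$, the group cannot act transitively.

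The main obstacle I expect is pinning down the precise invariant that obstructs $2$-transitivity in part (i): non-flatness alone only shows the group is not ``Darboux-rich'', but to preclude $2$-transitivity one must exhibit a genuine pointwise invariant (a function on $N$, invariant under all multisymplectomorphisms, that is non-constant) and that requires extracting, from the structure theory of $2$-plectic forms of this fixed linear type on $\mathbb R^6$ in Subsection \ref{subs:2pl6}, a concrete scalar built canonically out of $\omega^f$ and its first derivatives whose value at $x^2=c$ depends on $c$. Computing that this scalar is non-constant for $f(t)=t$ — as opposed to showing it is merely sometimes non-zero — is the delicate computational and conceptual step; everything else (non-degeneracy, constant type, the explicit multisymplectomorphisms, the localization in (ii)) is routine.
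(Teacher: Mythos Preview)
Your overall architecture is sound and matches the paper for the preamble, for transitivity in (i), and for all of (ii). The serious problem is your strategy for \emph{non}-$2$-transitivity in part (i).

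You propose to find a scalar invariant: a function on $N$, built canonically from $\omega^f$ and its derivatives, which is preserved by every multisymplectomorphism and is non-constant (varying with $x^2$). But such an invariant would immediately contradict the transitivity you just established: if $F\circ\phi=F$ for all $\phi\in\mathrm{Diff}(N,\omega^f)$ and $F$ is non-constant, then orbits lie in level sets of $F$ and the action cannot be $1$-transitive. Your argument that ``no multisymplectomorphism can map a pair $(p,q)$ with distinct invariant-values to a pair with equal values'' is really an argument against $1$-transitivity, not merely against $2$-transitivity. So this line cannot possibly succeed.

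What the paper does instead is find an invariant \emph{foliation}, not an invariant function. For this linear type the form decomposes uniquely (up to order) as $\omega^f=\omega_1+\omega_2$ with $\omega_1,\omega_2$ decomposable; a multisymplectomorphism therefore preserves or swaps $\omega_1,\omega_2$, hence preserves $\Omega=\omega_1\wedge\omega_2$ and $d\omega_1$ up to sign. Writing $\iota_\xi\Omega=d\omega_1$ defines a bivector $\xi$, and one computes that $\iota_\xi\iota_\xi\Omega=\iota_\xi d\omega_1$ is a closed $2$-form (explicitly a multiple of $dx^1\wedge dx^2$), whose kernel is a rank-$4$ foliation preserved by every multisymplectomorphism. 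A preserved foliation of intermediate rank obstructs $2$-transitivity (two points on a common leaf cannot be sent to two points on distinct leaves) without obstructing $1$-transitivity. This is the mechanism you are missing; the ``concrete scalar'' you are hunting for does not exist, precisely because the group \emph{is} transitive.

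A minor point: your proposed scaling $(x^1,e^tx^2,x^3,e^{-t}x^4,x^5,x^6)$ does not preserve $\omega^f$ (the term $dx^{146}$ picks up a factor $e^{-t}$). The paper uses the vector field $x^1\partial_1+x^2\partial_2-\tfrac12 x^4\partial_4+\tfrac12 x^5\partial_5-x^6\partial_6$, which does work; you will need something of this shape.
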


In Section 6 we discuss the notion of observables and Hamiltonian symmetries on multisymplectic manifolds. It is well-known that both notions lead in field theories to ``algebraic complications'' compared to the case of mechanics. Typically in physical field theories symmetries preserve the Lagrangian density only up to a total divergence and symmetries form Lie algebras ``up to divergences''. Mimicking the Poisson bracket on a multisymplectic manifold, we find a precise ``Hamiltonian'' counterpart of these phenomena: Considering $(n{-}1)$-forms $\alpha,\beta,\gamma$ on an $n$-plectic manifold $(M,\omega)$ possessing vector fields $X_\alpha,X_\beta, X_\gamma$ such that $\iota_{X_\alpha=-d\alpha}$ etc., we define 
\[\{\alpha,\beta\}:= l_2(\alpha,\beta)=\iota_{X_\beta}\iota_{X_\alpha}\omega.\] 
We then find that $l_2$ is a Lie bracket up to exact terms: 

\[
\{\alpha, \{\beta,\gamma\}\}-\{\{\alpha, \beta\},\gamma\}-\{\beta, \{\alpha,\gamma\}\}=-d (\iota_{X_\gamma}\iota_{X_\beta}\iota_{X_\alpha}\omega).
\]
Mathematicians and physicists tended to mod out closed or exact forms in order to get a bona fide Jacobi identity in similar contexts but under the influence of Stasheff and others, more (``higher'') structure became acceptable and these terms were kept in the picture. Baez and Rogers (cf. \cite{MR2892558}) finally uncovered the fact that a natural choice of observables on a multisymplectic manifolds carries the structure of a Lie $\infty$-algebra. In Subsection \ref{obs1} we recall its definition and give several examples of observable algebras. The natural next step was to define a comoment map as an $L_\infty$-morphism from a Lie algebra to the observables (compare \cite{MR3552542}). We review in Subsection \ref{obs2} this concept and the characteristic classes associated to a multisymplectic Lie algebra action obstructing the existence of a comoment. In Subsection \ref{obs3} we report, without giving proofs, on recent results on conserved quantities with respect to a Hamiltonian vector field, fulfilling $\iota_{X_H}\omega=-dH$ for an $(n{-}1)$-form $H$ on an $n$-plectic manifold $(M,\omega)$.

\begin{ack*}
	We would like to thank Daniel Bennequin, Fr\'ed\'eric H\'elein, Frank Kutzschebauch, Camille Laurent-Gengoux and Marco Zambon for discussions related to this article. The first-named author would also like to thank the RUB Research School for financial support.
\end{ack*}

\section{Classical Field Theory}
\label{sec:1}
In this section we explain the Hamiltonian formulation of classical field theories that allows to replace the infinite dimensional (Lagrangian) variational approach by the study of analytic and geometric questions on certain types of finite dimensional manifolds, called multiphase spaces or multimomentum bundles. Since these ideas are a direct, though technically involved generalization of time-dependent Hamiltonian mechanics, we start by reviewing the latter subject in Subsection \ref{subsec1}. In the next subsection, \ref{subsec1-2}, we describe the general set-up, i.e. jet bundles and multiphase spaces in some detail. In Subsection \ref{subsec1-3} we explain the transition from Lagrangian to Hamiltonian classical field theories, and the various equivalent characterizations of solutions of field theories. The content of this section is essentially known if not classic and good references include \cite{MR997295} for the mechanics part and \cite{ MR1244450, MR2370237, MR0341531,MR2105190, MR0334772, MR1098517,1310.7930} and \cite{MR2559661} for the field theory part, but we think that its inclusion is highly useful here. It allows to see how multisymplectic manifolds and the Hamilton-Volterra (and Hamilton-DeDonder-Weyl) equations generalize the well-known ``Hamiltonian picture'' of mechanics to field theory. The main novelties here are our insisting on the question how the dynamics of a classical field theory can be defined by a ``Hamiltonian function'', as opposed to a ``Hamiltonian section'' and the introduction of ``vortex n-planes'' inside multiphase spaces to characterize the solutions of a Hamiltonian field theory. Put together this allows to formulate the condition on a map with $n$-dimensional source to be a solution in a geometric way that generalizes from multiphase spaces to arbitrary ``n-plectic manifolds'' (see the second condition in Theorem \ref{fin1thm}, the remarks following the theorem and Remark \ref{rem-end-2}).

\subsection{Time-dependent classical mechanics revisited}
\label{subsec1}
Since Lagrangian formulations typically allow for an explicit dependence on time or spacetime of the density, we review here thoroughly the Hamiltonian approach to time-dependent mechanics.

\begin{definition}
	\begin{enumerate}	Let $Q$ be a manifold of dimension $N$ and $T^*Q$ its cotangent bundle.
		\item  The 1-form $\theta^{T^*Q}$ on $T^*Q$ defined by $$\theta_{\alpha_{q}}(u_{\alpha_{q}}):=\alpha_q(  (\mathrm{proj}_Q)_* (u_{\alpha_q})),~~~~~~\forall q\in Q, \forall \alpha_q\in T^*_qQ,~~ \forall u_{\alpha_q}\in T_{\alpha_q}(T^*Q),$$ 
		is called the  tautological (or canonical) 1-form on $T^*Q$. The negative of its exterior derivative $\omega^{T^*Q}=-d\theta^{T^*Q}$ is called the canonical 2-form on $T^*Q$. In this context, $Q$ is sometimes called the ``configuration space'' and $T^*Q$ the ``phase space (associated to $Q$)''. 
		\item Given local coordinates $(q^1,...,q^N)$ on $Q$, defined on an open subset $U\subseteq Q$, we can describe an element $\alpha\in T^*U\subseteq T^*Q$ by its basepoint $q\in U\subseteq Q$ and its coefficients relative to the base $\{  (dq^1)_q, ...,(dq^N)_q   \}$ of $T^*_qQ$. I.e., given the coordinates $q^a$ on $U$ the standard coordinates of $\alpha=\sum_{a=1}^{N}p_a(dq^a)_q$ are $(q^1,...,q^N,p_1,...,p_N)$.
		\item Given a symplectic manifold $(M,\omega)$ and a (``Hamiltonian'') function $H\in C^\infty(M,\mathbb R)$, we call the unique vector field $X_H\in \mathfrak X(M)$ fulfilling $\iota_{X_H}\omega=-dH$ the ``Hamiltonian vector field associated to $H$''.  
		\item The equation $X_H(\gamma(t))=\dot \gamma(t)$ for a differentiable curve $\gamma$ in a symplectic manifold $(M,\omega)$ is called the ``Hamiltonian equation (for $\gamma$ with respect to the Hamiltonian function $H$)''. In local coordinates $(q^a,p_a)$ satisfying $\omega=\sum_{a=1}^Ndq^a\wedge dp_a$ one arrives at the traditional Hamiltonian equation
		$$
		\frac{d(q^a\circ \gamma(t))}{dt}=-\frac{\partial H}{\partial p_a}(\gamma(t)), ~~\frac{d(p^a\circ \gamma(t))}{dt}=\frac{\partial H}{\partial q^a}(\gamma(t)) ~~~\forall a\in\{1,...,N\}.
		$$
		\item Let $I\subset \mathbb R$ be an open interval and $\mathcal H:I\times T^*Q\to \mathbb R$ be a smooth function, called a ``time-dependent Hamiltonian function''. Identifying $T^*(I\times Q)$ with $\mathbb R\times( I\times T^*Q)$ and denoting by $p$ the standard coordinate on $\mathbb{R}$, the fiber of $T^*I\to I$, the submanifold $$W:=\{\mathcal H-p=0\}\subset T^*(I\times Q)$$ is a smooth hypersurface, the image of $h:I\times T^*Q\to T^*(I\times Q)=\mathbb R\times (I\times T^*Q),$ where $ h=(\mathcal H, id_{I\times T^*Q})$. The form $\omega^{T^*(I\times Q)}$ on $W$ has one-dimensional kernel $\mathrm{ker}(\omega^{T^*(I\times Q)}|_{TW})\subset TW$, and a leaf of the associated foliation is called a ``vortex line of $\mathcal H$ in $T^*(I\times Q)$''.
		
		 (Note that, e.g. in \cite{MR997295}, one can equivalently consider the isomorphism $h:I\times T^*Q\to W$ and $\omega_h=h^*(\omega^{T^*(I\times Q)})$ and interpret a vortex line as a one-dimensional submanifold of $I\times T^*Q$.)
	\end{enumerate}
\end{definition}

\noindent We can now formulate the folkloric

\begin{theorem}[Equivalent formulations of time-dependent dynamics in Hamiltonian mechanics]
	Let $Q$ be an $N$-dimensional manifold, $I\subset \mathbb R$ an open interval and $\mathcal H:I\times T^ *Q\to \mathbb R$ a smooth function. Then the following are equivalent for a smooth map $\psi:I\to T^ *Q$:
	\begin{enumerate}
		\item The map $\psi$ satisfies the (time-dependent) Hamiltonian equation for $\mathcal H$ on $(T^ *Q, \omega^ {T^*Q})$, i.e. one has $X_{\mathcal H_t}(\psi (t))=\dot{\psi}(t)$, where $\mathcal H_t(y):=\mathcal H(t,y)$ $\forall t\in I, \forall y \in T^ * Q$. Equivalently, in standard coordinates on $T^ *Q$ one has for all $a\in \{1,...,N\}$:
		$$
		\frac{\partial\mathcal H}{\partial q^a}(t,\psi(t))
		=\frac{d(p_a\circ \psi(t))}{dt}, 
		~~-\frac{\partial\mathcal H}{\partial p_a}(t,\psi(t))
		=\frac{d(q^a\circ \psi(t))}{dt}.
		$$
		\item The map $\Psi: I\to T^ *(I\times Q)=\mathbb R \times (I\times T ^ *Q)$, defined by $\Psi(t)=( \mathcal H(t,\psi(t)),t, \psi(t))$, satisfies the Hamiltonian equation for $H=\mathcal H-p$ on $(T^*(I\times Q), \omega^ {T^* (I\times Q)})$, i.e. $ \dot \Psi(t) = X_H(\Psi(t))$ $\forall t$. Writing $x$ for the canonical coordinate on $I$ and $p$ for the canonical coordinate on $T^*_xI$, one has in standard coordinates $\forall a\in \{ 1,...,N\}$:
		$$
		\frac{\partial H}{\partial q^a}(\Psi(t))=	\frac{\partial\mathcal H}{\partial q^a}(t,\psi(t))
		=\frac{d(p_a\circ \psi(t))}{dt},$$
		$$
		-\frac{ H}{\partial p_a}(\Psi(t))=-\frac{\partial\mathcal H}{\partial p_a}(t,\psi(t))
		=\frac{d(q^a\circ \psi(t))}{dt},
		$$		
		as well as
		$$
		 \frac{\partial H}{\partial x}(\Psi(t))=\frac{d(p\circ \Psi(t))}{dt} ~\mathrm{and} ~ -\frac{\partial H}{\partial p}(\Psi(t))=\frac{d(x\circ \Psi(t))}{dt}.
		$$
		\item For the above $\Psi$, $im(\Psi)$ is a vortex line of $\mathcal H$ in $T^ * (I\times Q)$.
		\item For all $X\in \mathfrak X(T^*Q)$ with compact support, considered as ``vertical vector fields'' on $I\times T^ *Q\to I$ (i.e. vector fields in $\mathrm{ker}(proj_I)_*\subset T(I\times T^ *Q)$), we have $(id_I, \psi)^*(\iota_X\omega_h)=0$, where $\omega_h=h^ *(\omega^{T^*(I\times Q)})$. In standard coordinates $\omega_h$ is given by 
		$$
		-d\mathcal H\wedge dx - \sum_{a=1}^Ndp_a \wedge dq^a. 
		$$ 
		\item The section $(id_I, \psi)\in \Gamma_{C^\infty}(I,I\times T^*Q)\cong  {C^\infty}(I,T^*Q)$ is a critical point of the functional 
		\[
		\mathbb H: \Gamma_{C^\infty}(I,I\times T^*Q)\to \mathbb R, ~~\mathbb H[\psi]:=\int_I(id_I,\psi)^*\theta_h,
		\]
		where $\theta_h=h^*(\theta^{T^*(I\times Q)})$.
	\end{enumerate}
\end{theorem}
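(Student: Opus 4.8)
The plan is to split the five-fold equivalence into the two chains $(1)\Leftrightarrow(2)\Leftrightarrow(3)$ and $(1)\Leftrightarrow(4)\Leftrightarrow(5)$, reducing every single equivalence to the coordinate form of the time-dependent Hamilton equations appearing in~(1). The one structural input I would isolate at the outset concerns $H=\mathcal H-p$ on $T^*(I\times Q)$: the Hamiltonian vector field $X_H$ is tangent to the hypersurface $W=\{H=0\}$ and, restricted to $W$, spans the one-dimensional characteristic distribution $\ker\bigl(\omega^{T^*(I\times Q)}|_{TW}\bigr)$. Both assertions follow immediately from $\iota_{X_H}\omega^{T^*(I\times Q)}=-dH$: evaluating on $X_H$ gives $dH(X_H)=-\omega^{T^*(I\times Q)}(X_H,X_H)=0$, so $X_H$ is tangent to the level sets of $H$, in particular to $W$; and for any $Y$ tangent to $W=\ker dH$ one has $\omega^{T^*(I\times Q)}(X_H,Y)=-dH(Y)=0$, so $X_H|_W$ lies in the characteristic distribution, which is $1$-dimensional by hypothesis, and $X_H$ is nowhere zero, hence spans it.

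For $(1)\Leftrightarrow(2)$ I would compute $X_H$ in the standard coordinates $(x,q^a;p,p_a)$ on $T^*(I\times Q)=\mathbb R\times(I\times T^*Q)$, obtaining
\[
X_H=\frac{\partial}{\partial x}+\sum_{a=1}^N\left(-\frac{\partial\mathcal H}{\partial p_a}\frac{\partial}{\partial q^a}+\frac{\partial\mathcal H}{\partial q^a}\frac{\partial}{\partial p_a}\right)+\frac{\partial\mathcal H}{\partial x}\frac{\partial}{\partial p},
\]
and compare with $\dot\Psi(t)$ for $\Psi(t)=(\mathcal H(t,\psi(t)),t,\psi(t))$. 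The $x$-components agree identically; the $q^a$- and $p_a$-components of $\dot\Psi=X_H\circ\Psi$ are precisely the equations of~(1); and the $p$-component reads $\frac{d}{dt}\mathcal H(t,\psi(t))=\frac{\partial\mathcal H}{\partial x}(t,\psi(t))$, which is an automatic consequence of the $q^a$- and $p_a$-equations since the remaining terms cancel in pairs. Hence $\dot\Psi=X_H\circ\Psi$ is equivalent to~(1). For $(2)\Leftrightarrow(3)$ I would note that $\Psi$ takes values in $W$ (by construction $H\circ\Psi\equiv 0$) and is an injective immersion (its $x$-component is $t$); then, by the structural fact above, $\mathrm{im}(\Psi)$ is an integral manifold of the characteristic foliation of $W$ iff $\dot\Psi(t)\in\mathbb R\cdot X_H(\Psi(t))$ for all $t$, and comparing $x$-components forces the proportionality factor to be $1$, i.e.\ (2). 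If ``vortex line'' is understood as a \emph{maximal} leaf, condition~(3) becomes ``$\mathrm{im}(\Psi)$ is contained in a vortex line'' (or one restricts to maximal $\psi$); I would flag this minor point.

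For $(1)\Leftrightarrow(4)$ I would take a vertical vector field $X=\sum_a(A^a\,\partial/\partial q^a+B_a\,\partial/\partial p_a)$ on $I\times T^*Q$, contract with $\omega_h=-d\mathcal H\wedge dx-\sum_a dp_a\wedge dq^a$ (using $dx(X)=0$), and pull back along $s=(\mathrm{id}_I,\psi)$, obtaining after a short computation
\[
s^*(\iota_X\omega_h)=\sum_{a=1}^N\left[A^a\left(\frac{d(p_a\circ\psi)}{dt}-\frac{\partial\mathcal H}{\partial q^a}\right)-B_a\left(\frac{d(q^a\circ\psi)}{dt}+\frac{\partial\mathcal H}{\partial p_a}\right)\right]dt,
\]
with the $\mathcal H$-derivatives evaluated at $(t,\psi(t))$. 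Since $A^a$ and $B_a$ can be prescribed arbitrarily at any given point and then cut off to have compact support, the vanishing of this $1$-form for all such $X$ is equivalent to the equations of~(1). Finally, for $(4)\Leftrightarrow(5)$ I would compute the first variation of $\mathbb H$ along a compactly supported vertical variation with infinitesimal generator $X$: using $\frac{d}{d\epsilon}\big|_0 s_\epsilon^*\theta_h=s^*\mathcal L_X\theta_h$, Cartan's formula, and $d\theta_h=-\omega_h$ (from $\omega_h=-d\theta_h$),
\[
\left.\frac{d}{d\epsilon}\right|_0\mathbb H[\psi_\epsilon]=\int_I s^*\bigl(d\,\iota_X\theta_h+\iota_X d\theta_h\bigr)=\int_I d\bigl(s^*\iota_X\theta_h\bigr)-\int_I s^*(\iota_X\omega_h),
\]
and the first integral is the boundary value of the compactly supported function $s^*(\iota_X\theta_h)$ on $I$, hence zero. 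Thus $s$ is critical for $\mathbb H$ iff $\int_I s^*(\iota_X\omega_h)=0$ for all compactly supported vertical $X$, and by localization this is equivalent to the pointwise vanishing in~(4) (hence, by the previous paragraph, to~(1)).

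I expect no serious obstacle: all the computations are elementary. The only real friction is bookkeeping — keeping mutually consistent the various sign and ordering conventions ($\omega=-d\theta$, the splitting $T^*(I\times Q)\cong\mathbb R\times(I\times T^*Q)$, the pullbacks by $h$ and by $s$), and handling the ``vertical vector field on $I\times T^*Q$ versus section of $I\times T^*Q\to I$'' formalism cleanly, in particular justifying the interchange of $\frac{d}{d\epsilon}$ with $\int_I$ and the identity $\frac{d}{d\epsilon}\big|_0 s_\epsilon^*\theta_h=s^*\mathcal L_X\theta_h$ for the variation of the pulled-back form — this is exactly the place where the compact-support hypothesis is used.
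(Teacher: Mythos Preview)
Your proposal is correct and close in spirit to the paper's proof, but the routing differs at one link. The paper runs the chain $(1)\Leftrightarrow(2)\Leftrightarrow(3)\Leftrightarrow(4)\Leftrightarrow(5)$ linearly: for $(3)\Leftrightarrow(4)$ it pushes the pulled-back form $(\mathrm{id}_I,\psi)^*(\iota_X\omega_h)$ through $h$ to $W\subset T^*(I\times Q)$, observes that $T_{\Psi(t_0)}W$ is spanned by $h_*(V)$ together with $\dot\Psi(t_0)$ (where $V$ is the vertical tangent space), and concludes that vanishing for all vertical $X$ is exactly the statement that $\dot\Psi(t_0)$ lies in the kernel of $\omega|_{TW}$. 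You instead close the diagram via $(1)\Leftrightarrow(4)$, expanding $s^*(\iota_X\omega_h)$ in coordinates and reading off the Hamilton equations directly. Your route is slightly more elementary and avoids the small lemma that $h_*(V)$ and $\dot\Psi(t_0)$ together span $T_{\Psi(t_0)}W$; the paper's route, on the other hand, makes the geometric content of condition~(4) transparent by identifying it with the vortex-line condition without passing through coordinates. The remaining steps---$(1)\Leftrightarrow(2)$ by comparing $\dot\Psi$ with the explicit $X_H$ and checking that the $p$-equation is automatic, $(2)\Leftrightarrow(3)$ via the characteristic distribution with the $x$-component fixing the proportionality constant to~$1$, and $(4)\Leftrightarrow(5)$ via Cartan's formula and the boundary term---are essentially identical to the paper's. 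Your flag about maximality of the leaf in~(3) is fair; the paper silently reads ``vortex line'' as a one-dimensional integral manifold rather than a maximal leaf.
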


\begin{proof}
	Obviously the second assertion implies the first. Assume now that statement 1 is true. Since $x(t)=t$, $\frac{dx}{dt}=1$ and by $-\frac{\partial H}{\partial p}=\frac{\partial p}{\partial p}=1$ the second ``new'' equation in statement 2 is verified. Furthermore, since $H=\mathcal H - p$ is constant on the solution curves and $\psi$ solves the Hamiltonian equation for $\mathcal H$:
	{\small \[
	\frac{dp}{dt}=\frac{d\mathcal H}{dt}=\sum_a\left( \frac{\partial \mathcal H}{\partial q^a}\frac{dq^a}{dt} + \frac{\partial H}{\partial p^a}\frac{dp^a}{dt} \right) +\frac{\partial \mathcal H}{\partial x}\frac{dx}{dt}
	=\sum_a\left( \frac{dp_a}{dt}\frac{dq^a}{dt} - \frac{dq^a}{dt}\frac{dp^a}{dt} \right) +\frac{\partial \mathcal H}{\partial x}=\frac{\partial \mathcal H}{\partial x}.
	\]}
	This implies, that the first new equation is satisfied as well.\\
	
	\noindent Assume now, that the second assertion is satisfied and we are given \[t_0\in I, \Psi(t_0)\in \mathrm{im}(\Psi)\subset W=\{ H=\mathcal H - p=0 \}\subset T^ *(I\times Q).\] Then $\dot \Psi (t_0)$ generates $T_{\Psi(t_0)}\mathrm{im}(\Psi)$ and we have \[\forall u\in T_{\Psi(t_0)}(W)=T_{\Psi(t_0)}(\{H=0\})=\{  v\in T_{\Psi(t_0)}(T^ *(I\times Q)) ~|~(dH)(v)=0 \}:\]
	\[
	\omega (\dot \Psi(t_0),u)=\omega (X_H(\Psi(t_0)), u)=- (dH)(u)=0.
	\]
	If $\mathrm{im}(\Psi)$ (with $\Psi(t)=(\mathcal H(t,\psi(t)),t,\psi(t))~\forall t$) is a vortex line of $\mathcal H$ in $T^ *(I\times Q)$  the equality $\omega(\dot \Psi(t_0),u)=0~\forall u\in \mathrm{ker}(dH)_{\Psi(t_0)}=T_{\Psi(t_0)}W$ shows that $\dot \Psi(t_0)$ is proportional to $X_H(\Psi(t_0))$. Since in standard coordinates the $\frac{\partial}{\partial x}|_{\Psi(t_0)}$-component of both tangent vectors is one, it follows that $\Psi$ solves the Hamiltonian equation for $H$, i.e. the third assertion implies the second. \\
	
	In order to show the equivalence of the assertions 3 and 4 let $X$ be a vector field on $T^*Q$ viewed as a vector field on $I\times T^ *Q$. Then for $t_0\in I$ we have 
	\begin{align*}
	( (id_I,\psi)^*(\iota_X\omega_h))_{t_0} \left(\frac{\partial}{\partial t}\bigg|_{t_0}\right)&=\omega_h\left(X(\psi(t_0)), (id_I,\psi)_*\left(\frac{\partial}{\partial t}\bigg|_{t_0}\right)\right)\\
	&=\omega^{T^ *(I\times Q)} \left(h_*(X(\psi(t_0)) ), h_*\circ (id_I,\psi)_* \left(\frac{\partial}{\partial t}\bigg|_{t_0}\right)\right)\\&=\omega^{T^ *(I\times Q)} \Bigg(h_*(X(\psi(t_0)) ), \underbrace{\Psi_*\left(\frac{\partial}{\partial t}\bigg|_{t_0}\right)}_{\dot \Psi(t_0)}\Bigg).
\end{align*}
Let us observe that $T_{(t_0,\psi(t_0))}(I\times T^*Q)$ is generated by the subspace \newline $V:= \{X(\psi(t_0))~ |~X ~\text{ vector field on }T^*Q\}$ and the tangent vector $\frac{\partial}{\partial x}|_{t_0}$ as well as by $V$ and the derivative of the curve $t\mapsto (t,\psi(t))$ at time $t_0$. Since $h$ is a diffeomorphism from $I\times T^* Q$ to $W=\{H=0\}$,  $T_{\Psi(t_0)}W$ is generated by $h_*(V)$ and $h_*(\frac{d}{dt}|_{t_0}(t,\psi(t)))=\dot \Psi (t_0)$. Given that  $\omega^ {T^*(I\times Q)}(\dot \Psi(t_0), \dot{\Psi}(t_0))=0$ we arrive at the conclusion that $\dot \Psi (t_0)$ generates $\mathrm{ker}(\omega^ {T^*(I\times Q)}|_{T_{\Psi(t_0)}W})$ if and only if $(id_I,\psi)^ *(\iota_X\omega_h)=0$ for all vector fields on $T^ *Q$. (Obviously it is enough to consider vector fields with compact support.) The equivalence of assumptions 3 and 4 is thus shown.\\

Before showing the equivalence of the last two assertions let us recall what a ``variation of a section'' in the given situation is: Let $X$ be a vector field (with compact support) on $T^*Q$, considered as a vertical field on $I\times T^* Q$ over $I$. Integrating $X$ yields a flow $(\sigma_\epsilon^X)_{\epsilon\in \mathbb R}$ on $T^* Q$ (resp. $(id_I\times \sigma_\epsilon^X)$ on $I\times T^*Q$) and $(id_I,\psi)$ is a critical section if and only if 
\[
0=\frac{d}{d\epsilon}\bigg|_0\mathbb H[(id_I, \sigma_\epsilon^X\circ \psi)]~~~~~~\forall X\in \mathfrak X(T^*Q) \text{ with compact support}.
\]
Using the fundamental theorem of differential and integral calculus we obtain
\[
\frac{d}{d\epsilon}\bigg|_0 \mathbb H[(id_I, \sigma_\epsilon^X\circ \psi)]
=\int_I (id_I,\psi)^* \frac{d}{d\epsilon}\bigg |_0(id_I\times \sigma_\epsilon^X)^*\theta_h=\int_I (id_I,\psi)^*(\mathcal L_X\theta_h) 
\]
\[
=-\int_I (id_I,\psi)^*(\iota_X\omega_h) + \int_I(id_I,\psi)^*d(\iota_X\theta_h)=-\int_I(id_I,\psi)^*(\iota_X\omega_h)
,\]
since we can assume that $X$ vanishes near the boundary of $ \mathrm{im}((id_I,\psi))$. It follows that $(id_I, \psi)$ is a critical section if and only if $(id_I,\psi)^*(\iota_X\omega_h)=0$ for all $X\in \mathfrak (T^*Q)$ with compact support, i.e. statements 4 and 5 are equivalent.
\end{proof}

\begin{remark}$ $
	\begin{enumerate}
		\item The history of the equivalence theorem is long, compare e.g. \cite{MR997295} or \cite{MR0334772} for crucial points. Kijowski calls the ``new'' equations in statement 2 relative to 1, ``energy equations''.
		\item The spaces $I\times T^*Q$ resp. $T^ *(I\times Q)=\mathbb R\times (I\times T^*Q)$ are sometimes called the ``(simply) extended phase space'' resp. the ``doubly extended phase space''.
	\end{enumerate}
\end{remark}

\subsection{Jet bundles and their duals}\label{subsec1-2}
The multiphase spaces generalizing cotangent bundles (often called ``phase spaces'') are here described in terms of jet bundles and their duals as well as in terms of restricted multicotangent bundles, both points of view being useful in Subsection \ref{subsec1-3}.
\begin{definition}[First jet bundle.]
	Let $\Sigma$ resp. $Q$ be manifolds with local coordinates is $(x^1,..,x^n)$ resp. $(q^1,...,q^N)$, where $n, N\geq 1$, and $E\overset{\pi}{\to} \Sigma$ a fiber bundle with typical fiber $Q$.
	\begin{enumerate}
		\item Given $x\in\Sigma$ amd $\phi_1,\phi_2$ local sections of $\pi$ near $x$, we call $\phi_1$ and $\phi_2$ ``1-equivalent at $x$'' if $\phi_1(x)=\phi_2(x)$ and in local coordinates $(x^\mu)$ near $x$ and $(x^\mu,q^a)$ near $\phi_1(x)=\phi_2(x)$ we have $\forall \mu,\forall a$:
		\[
		\frac{\partial(q^a\circ \phi_1)}{\partial x^\mu}(x)=	\frac{\partial(q^a\circ \phi_2)}{\partial x^\mu}(x).
		\]
		The 1-equivalence class at $x$ of a local section of $\phi$ of $\pi$ near $x$ is denoted by $j_x^1(\phi)$.
		\item We denote by $J^1\pi$ the ``first jet bundle of $\pi$'', set-theoretically defined as 
		\[
		\bigsqcup_{x\in \Sigma} \{~ j_x^1(\phi)~|~\phi \text{ a local section of $\pi$ near} ~x \},
		\]
		and its projections to $E$ resp. $\Sigma$ by $\pi_{1,0}$ resp. $\pi_1=\pi\circ \pi_{1,0}$. Explicitly, one has 
		\[
		\pi_{1,0}(j_x^1(\phi))=\phi(x) \text{ and } \pi_1(j_x^1(\phi))=x.
		\]
		\item Given coordinates $(x^\mu,q^a)$ on an open set $O\subseteq E$ as above, the induced coordinates on $O^1:=\{j_x^1(\phi)~| ~\phi(x)\in O\}\subseteq J^1\pi$ are defined as $(x^\mu, q^a,v_\mu^a)$ with $x^\mu(j_x^1(\phi))=x^\mu(x)$, $q^a(j_x^1(\phi))=q^a(\phi(x))$ and $v_\mu^a(j_x^1(\phi))=\frac{\partial (q^a\circ \phi)}{\partial x^\mu}(x)$. (Attention to the abuse of notation: $x$ denotes a point in $\Sigma$ and $(x^1,...,x^n)(x)$ its local coordinates!)
	\end{enumerate}
\end{definition}

\begin{remark}$ $
	\begin{enumerate}
		\item See, e.g., \cite{MR989588} for a detailed exposition of jet bundles.
		\item Note that $j_x^1(\phi_1)=j_x^1(\phi_2)$ if and only if $T_x\phi_1=T_x\phi_2$ as a map from $T_x\Sigma$ to $T_yE$, with $\phi_1(x)=y=\phi_2(x)$.
		\item We will always use coordinates on $E$ coming from coordinates $(x^\mu)$ on an open set $U\subseteq \Sigma$ such that $\pi:\pi^{-1}(U)\to U$ is trivializable and $(q^a)$ on an open set $V$ of $Q$ such that, after trivializing $\pi$ over $U$, 
		$U\times V\subset \pi^{-1}(U)\cong U\times Q$.
		\item If $E=\Sigma \times Q$ is a product and $\pi=\mathrm{proj}_{\Sigma}$, $J^1\pi \overset{\pi_{1,0}}{\to}{E}$ is a vector bundle, canonically isomorphic to $\pi^*(T^*\Sigma)\otimes V(\pi)\to E$, where $V(\pi)=\mathrm{ker}(\pi_*)\subset TE$ is the vertical subbundle of $TE$. Observe that in the product situation $V(\pi)=(\mathrm{proj}_Q)^*(TQ)$. In general, $J^1\pi\overset{\pi_{1,0}}{\to}E$ is only an affine bundle, modelled on the vector bundle $\pi^ *(T^*\Sigma)\otimes V(\pi)\to E$. This can be easily seen upon considering two sections $\phi_1,\phi_2$ with $\phi_1(x)=y=\phi_2(x)$. Then $T_x\phi_1-T_x\phi_2=A:T_x\Sigma\to V_y(\pi)=\mathrm{ker}(\pi_*)_y\subset T_yE$ is a linear map, i.e. an element of $(\pi^ *(T^*\Sigma)\otimes V(\pi))_y$. It follows that $E_y$ can be identified with the affine spaces of linear sections of $$0\to V_y(\pi)\to T_yE\overset{(\pi_*)_y}{\to}T_x\Sigma\to 0.$$
		\item If $E=\Sigma\times \mathbb R\overset{\pi}{\to}\Sigma$, one has $J^1\pi=\pi^*(T^ *\Sigma)\otimes (\text{proj}_\mathbb R)^*(T\mathbb R)=\pi^*(T^*\Sigma)=T^ *\Sigma\times \mathbb R$, and for a smooth function $\tilde \phi:\Sigma \to \mathbb R$ with associated section $\phi=(id_I,\tilde \phi)$ of $\pi$, one has $j_x^1(\phi)=((d\phi)_x,\phi(x))$.
	\item In case $\Sigma=I\subset \mathbb R$ is an open interval and $E=I\times Q$ , we have a canonical identification 
	\begin{align*}
J^1\pi=\pi^*(T^*\Sigma)\otimes V(\pi)=( (I\times Q)\times \mathbb R)\otimes (\mathrm{proj}_Q)^*(TQ)&\\=(\mathrm{proj}_Q)^*(TQ)=I\times TQ\to I\times Q&=E.
		\end{align*} 
	Given $\tilde \phi :I\to Q$ and $\phi:=(id_I,\tilde \phi)$, $j_x^1(\phi)$ is identified with $(t,\dot{\tilde{\phi}}(t))\in I\times T_{\tilde \phi(t)}Q\subset I\times TQ$.
	\end{enumerate}
\end{remark}

\subsubsection*{Recap' on affine spaces}
Let $\mathbb K$ be a field of characteristic zero and $V,W$ two $\mathbb K$-vector spaces of finite dimension and $A,B$ affine spaces modelled on $V$ resp. $W$. Then the space of affine maps from $A$ to $B$, $\mathrm{Aff}(A,B)$ is again an affine space, modelled on $W\oplus \mathrm{Hom}_{\mathbb K}(V,W)$. If $B=W$ is a vector space, $\mathrm{Aff}(A,W)$ is a vector space as well and $W\subset \mathrm{Aff}(A,W)$ as constant maps.\\

Given a linear subspace $Z$ of the model space $V$ of an affine space $A$, one can define the quotient space $A/Z$, again an affine space and modelled on $V/Z$. We call the vector space $A^*=\mathrm{Aff}(A,\mathbb K)/\mathbb K$ the ``affine dual of $A$''. More generally, if $D$ is a one-dimensional $\mathbb K$-vector space, we call $\mathrm{Aff}(A,D)/D$ the ``$D$-twisted dual of $A$'', isomorphic to its model $\mathrm{Hom}_\mathbb K(V,D)$.

\begin{lemma}
	Let $W\overset{\pi}{\to}U$ be a surjective linear map of finite-dimensional $\mathbb K$-vector spaces with kernel $V\subset W$. Then 
	\begin{enumerate}
	\item The set $S:=\{ \sigma:U\to W \text{ $\mathbb K$-linear}~|~ \pi\circ \sigma=id_U \}$ is an affine space modelled on $\mathrm{Hom}_\mathbb K(U,V)$, called the ``space of sections of $\pi$''.
	\item For $0\leq k \leq n:=\mathrm{dim}_\mathbb KU$ let $\Lambda^n_kW^*$ be defined as \[\{\eta\in \Lambda^nW^*~|~\forall v_1,..,v_{k+1}\in V, \iota_{v_{k+1}}...\iota_{v_1}\eta=0 \}.\] Then $\Lambda^n_kW^*$ is a linear subspace of $\Lambda^nW^*$ and $\Lambda^n_0W^*=\pi^ *(\Lambda^nU^*)$, $\Lambda^n_nW^*=\Lambda^nW^*$.
	\item The vector space $\mathrm{Aff}(S,\Lambda^nU^*)$ is canonically isomorphic to $\Lambda^n_1W^*$, the isomorphism sending the constant maps $\Lambda^nU^*$ to $\Lambda^n_0W^*$.
	\item The $\Lambda^nU^*$-twisted dual of $S$ is canonically isomorphic to $\Lambda^n_1W^*/\Lambda^n_0W^*$.
	\end{enumerate}
\end{lemma}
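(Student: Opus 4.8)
The plan is to handle the four statements in order; the first two are elementary bookkeeping and the substance lies in the third, with the fourth an immediate consequence.

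For (1) I would observe that for $\sigma_1,\sigma_2\in S$ one has $\pi\circ(\sigma_1-\sigma_2)=\mathrm{id}_U-\mathrm{id}_U=0$, so $\sigma_1-\sigma_2$ takes values in $V=\ker\pi$, i.e.\ lies in $\mathrm{Hom}_{\mathbb K}(U,V)$; conversely $\sigma+A\in S$ whenever $\sigma\in S$ and $A\in\mathrm{Hom}_{\mathbb K}(U,V)$, and $S\neq\emptyset$ because a surjection of finite-dimensional vector spaces admits a linear section. This exhibits $S$ as an affine space modelled on $\mathrm{Hom}_{\mathbb K}(U,V)$. For (2), each $\Lambda^n_kW^*$ is cut out of $\Lambda^nW^*$ by conditions linear in $\eta$, hence is a linear subspace. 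The condition $\iota_v\eta=0$ for all $v\in V$ means exactly that $\eta$ is pulled back from $\Lambda^n(W/V)^*$, and since $\pi$ induces an isomorphism $W/V\cong U$ this gives $\Lambda^n_0W^*=\pi^*(\Lambda^nU^*)$; and contracting an $n$-form with $n+1$ vectors is identically zero, so the defining condition for $k=n$ is vacuous and $\Lambda^n_nW^*=\Lambda^nW^*$.

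The core is (3). I would define $\Phi:\Lambda^n_1W^*\to\mathrm{Aff}(S,\Lambda^nU^*)$ by $\Phi(\eta)(\sigma):=\sigma^*\eta$, which lies in $\Lambda^nU^*$ since $\dim U=n$, and $\Phi$ is manifestly linear. The point that needs the definition of $\Lambda^n_1W^*$ is that $\Phi(\eta)$ is affine: fixing $\sigma_0\in S$ and writing $\sigma=\sigma_0+A$ with $A\in\mathrm{Hom}_{\mathbb K}(U,V)$, expanding $\eta(\sigma u_1,\dots,\sigma u_n)$ for $u_1,\dots,u_n\in U$ multilinearly gives a sum over the subsets of slots into which $A$ is inserted; every term with at least two such insertions feeds two vectors of $V$ into $\eta$ and therefore vanishes, so what remains is a constant term plus a term linear in $A$. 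To see $\Phi$ is bijective I would pass to a splitting $W=V\oplus\sigma_0(U)$ and write $\eta=\eta_0+\eta_1$ with $\eta_0\in\Lambda^n\sigma_0(U)^*$ and $\eta_1\in\Lambda^{n-1}\sigma_0(U)^*\otimes V^*$: if $\sigma^*\eta=0$ for all $\sigma$, evaluating at $\sigma_0$ kills $\eta_0$, and the remaining linear-in-$A$ identity $\sum_j\bar\beta_j\wedge(f^j\circ A)=0$, together with the fact that an $(n-1)$-form $\beta$ on an $n$-dimensional space with $\beta\wedge\mu=0$ for all $\mu$ must vanish, kills $\eta_1$; thus $\Phi$ is injective. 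Bijectivity then follows from the dimension count $\dim\Lambda^n_1W^*=1+n\cdot\dim V=\dim\Lambda^nU^*+\dim\mathrm{Hom}_{\mathbb K}(\mathrm{Hom}_{\mathbb K}(U,V),\Lambda^nU^*)=\dim\mathrm{Aff}(S,\Lambda^nU^*)$. Finally, if $\eta=\pi^*\bar\eta\in\Lambda^n_0W^*$ then $\sigma^*\eta=(\pi\circ\sigma)^*\bar\eta=\bar\eta$ is independent of $\sigma$, so $\Phi$ maps $\Lambda^n_0W^*$ into the constant maps, and since both are one-dimensional, onto them; this is the asserted compatibility.

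Assertion (4) is then immediate: by the definition of the $\Lambda^nU^*$-twisted dual recalled above it equals $\mathrm{Aff}(S,\Lambda^nU^*)/\Lambda^nU^*$, which $\Phi$ identifies with $\Lambda^n_1W^*/\Lambda^n_0W^*$. The main obstacle is the well-definedness and bijectivity in (3): one must use the two-fold contraction condition defining $\Lambda^n_1W^*$ precisely to rule out quadratic (and higher) dependence on $A$, and — rather than inverting $\Phi$ by hand — combine injectivity with the dimension equality to conclude.
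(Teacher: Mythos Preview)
Your proof is correct and follows the same approach as the paper: both define the map $\Phi(\eta)(\sigma)=\sigma^*\eta$ and verify affineness by expanding $\eta(\sigma_0 u_1+Au_1,\dots)$ multilinearly, observing that the terms with two or more $V$-insertions vanish precisely when $\eta\in\Lambda^n_1W^*$. The paper's own proof is in fact sketchier than yours---it establishes only that $\hat\eta$ is affine if and only if $\eta\in\Lambda^n_1W^*$ and then defers to a reference for the remaining details---whereas you supply an explicit injectivity argument via a splitting together with a dimension count, which is a clean way to close the gap.
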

\begin{proof}
	The main point is the construction of a map from $\Lambda^n_1W^*$ to $\mathrm{Aff}(S,\Lambda^nU^*)$. Define, for $\eta\in \Lambda^nW^*$, $\hat \eta:S\to \Lambda^nU^*$ by $\hat \eta(\sigma)=\sigma^*(\eta)$. In order to check when $\hat \eta$ is affine we choose an ordered basis $\{e_1,...,e_n\}$ of $U$ and with $\mathrm{vol}^U:=e_1^*\wedge ...\wedge e_n^*$, we obtain a map $\tilde \eta :S\to \mathbb K$ by $\hat \eta (\sigma)=\sigma^*(\eta)=\tilde \eta(\sigma)\cdot \mathrm{vol}^U$.
	Since $\hat \eta$ is affine if and only if $\tilde \eta$ is affine, we can take $\lambda\in \mathrm{Hom}_{\mathbb K}(U,V)$, the linear model space of $S$ and check that 
	\[\tilde \eta(\sigma+\lambda)-\tilde \eta(\sigma)=\sum_{j=1}^n\eta(\sigma(e_1),...,\lambda (e_j),...,\sigma (e_n)) + \text{`` terms with two or more $\lambda$'s ''}.
	\]
	It follows that $\tilde \eta$ (and thus $\hat \eta$) is affine if and only if $\eta\in \Lambda^n_1W^*$. More details on this construction can be found, e.g., in \cite{MR1244450}.
\end{proof}
\begin{remark}
	If we fix a volume form on $U$, $\Lambda^nU^*$ and $\Lambda^n_0W^*=\pi ^*(\Lambda^nU^*)$ are canonically identified with $\mathbb K$.
\end{remark}

\begin{construction}
	Let $E\overset{\pi}{\to}\Sigma$ a smooth fiber bundle with typical fiber $Q$, and let $n=\mathrm{dim}_\mathbb R\Sigma$, $N=\mathrm{dim}_\mathbb RQ$, as well as 
	\[
	    \xymatrix{
	    	J^1\pi \ar[d]_{\pi_{1,0}} \ar@/^2pc/[dd]^\pi\\
	    	E \ar[d]_{\pi}\\
	    	M}
	\]
	the first jet bundle of $\pi$.  
	Applying fiberwise the constructions of the preceding lemma, we obtain the $\pi^*(\Lambda^nT^*\Sigma)$-twisted dual $P(\pi)$ of $J^1\pi$ and the bundle $M(\pi)$ of fiberwise (over $E$) affine maps from $J^1\pi$ to $\pi^*\Lambda^nT^*\Sigma$. Both spaces are vector bundles over $E$ and ``identified'' by the next proposition. 
\end{construction}

\begin{definition}
	Let $\pi:E\to \Sigma$ be a fiber bundle with typical fiber $Q$ and let $n=dim_\mathbb R\Sigma$. For $0\leq k\leq n$ we set 
	\begin{align*}
	\Lambda^n_kT^*E=\{  \eta\in \Lambda^nT^*E~|~ \eta \text{ lies over } y\in E \text{ and } \forall u_1,..,u_{k+1}\in V_y(\pi)=\mathrm{ker}(\pi_*)_y,\\ \iota_{u_{k+1}}...\iota_{u_1}\eta=0 \}.
		\end{align*}	
\end{definition}

\begin{remark}
	For $0\leq k\leq n$, $\Lambda^n_kT^*E$ is a sub vector bundle of $\Lambda^nT^*E$ and $\Lambda^n_0T^*E$ is canonically isomorphic to $\pi^*(\Lambda^ n T^*\Sigma)$, whereas $\Lambda^n_nT^*E=\Lambda^nT^ *E$.
\end{remark}

\begin{proposition}\label{propj}
	Let $\pi:E\to \Sigma$ be a fiber bundle with typical fiber $Q$ and let $n=\mathrm{dim}_\mathbb R\Sigma$. Then
	\begin{enumerate}
		\item There is an isomorphism  $M(\pi)\cong \Lambda^n_1T^*E$ of vector bundles over $E$ and the latter has coordinates $(x^\mu, q^a, p^\mu_a,p)$, where $(x^1,...,x^n)$ are local coordinates on $\Sigma$, $(q^1,...,q^N)$ are local coordinates on $Q$ and an element $ \eta\in (\Lambda^n_1T^ *E)_{x,q}$ is given by $\eta=pdx^1\wedge ...\wedge dx^n + \sum_{\mu,a} p_a^\mu dq^a\wedge \widehat{d^nx^\mu}$, where $\widehat{d^nx^\mu}
		=\iota_{\frac{\partial}{\partial x^\mu}}(dx^1\wedge ...\wedge dx^n)
		=(-1)^{\mu+1}dx^1\wedge ... \wedge dx^{\mu-1}\wedge dx^{\mu+1}\wedge ...\wedge dx^n$.
		\item The following diagram commutes
			\[
			\xymatrix{
			\Lambda^nT^ *E	&\ar[l]_\supset\Lambda^n_1T^*E\ar[r]^\cong \ar[dd]&M(\pi)\ar[rd] \ar[dd]^\mu&\\
				&&&E\\
				V^*(\pi)\otimes \pi^*(\Lambda^{n-1}T^ *\Sigma)\ar[r]^{~~~~~\cong}&\Lambda^n_1T^*E/\Lambda^n_0T^*E\ar[r]^{~~~~\cong}&P(\pi)\ar[ru]&
				},
			\]
		where the oblique arrows are projections of vector bundles and the vertical arrows are surjective submersions, realising $P(\pi)$ as a quotient bundle of $\Lambda^n_1T^*E\to E$. Furthermore, $P(\pi)$ is isomorphic to $\pi^*(T\Sigma)\otimes V^*(\pi)\otimes \pi^ *(\Lambda^nT^*\Sigma)=V^*(\pi)\otimes \pi^*(\Lambda^{n-1}T^ *\Sigma)$.
		The manifold $P(\pi)$ has local coordinates $(x^\mu,q^a,p_a^\mu)$ analogous to the local coordinates of $M(\pi)$ given by point 1.
	\end{enumerate}
\end{proposition}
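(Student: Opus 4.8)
The plan is to derive both assertions by applying the preceding lemma fiberwise over $E$ and then translating the result into the jet coordinates $(x^\mu,q^a,v^a_\mu)$ on $J^1\pi$; no idea beyond this globalisation is needed, so the work is essentially bookkeeping. Concretely, I would fix $y\in E$ and set $x=\pi(y)$. By the remark on the affine bundle structure of $\pi_{1,0}\colon J^1\pi\to E$, the fiber of $\pi_{1,0}$ over $y$ is canonically the affine space $S_y$ of $\mathbb R$-linear sections of the short exact sequence
\[
0\longrightarrow V_y(\pi)\longrightarrow T_yE\overset{(\pi_*)_y}{\longrightarrow}T_x\Sigma\longrightarrow 0 .
\]
Applying the preceding lemma with $\mathbb K=\mathbb R$, $W=T_yE$, $U=T_x\Sigma$, $V=V_y(\pi)$ and $n=\dim_\mathbb R\Sigma$ yields canonical isomorphisms $\mathrm{Aff}\bigl(S_y,\Lambda^nT_x^*\Sigma\bigr)\cong(\Lambda^n_1T^*E)_y$, carrying the constant maps $\Lambda^nT_x^*\Sigma$ onto $(\Lambda^n_0T^*E)_y$, and $\mathrm{Aff}\bigl(S_y,\Lambda^nT_x^*\Sigma\bigr)/\Lambda^nT_x^*\Sigma\cong(\Lambda^n_1T^*E/\Lambda^n_0T^*E)_y$. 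Since the lemma's constructions are natural and the sequence above is the fiber of the smooth sequence $0\to V(\pi)\to TE\to\pi^*T\Sigma\to 0$, these fiberwise isomorphisms assemble into isomorphisms of vector bundles over $E$, namely $M(\pi)\cong\Lambda^n_1T^*E$ and $P(\pi)\cong\Lambda^n_1T^*E/\Lambda^n_0T^*E$.

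For the coordinate formula in (1): in coordinates $(x^\mu,q^a)$ on an open $O\subseteq E$, a general $\eta\in\Lambda^nT^*_yE$ decomposes as a multiple $p\,dx^1\wedge\cdots\wedge dx^n$ of the top $x$-form, plus terms $\sum_{\mu,a}p^\mu_a\,dq^a\wedge\widehat{d^nx^\mu}$, plus terms containing at least two factors $dq^b$; the defining condition $\iota_{u_2}\iota_{u_1}\eta=0$ for $u_1,u_2\in V_y(\pi)$ annihilates exactly the last group, so $\{dx^1\wedge\cdots\wedge dx^n\}\cup\{dq^a\wedge\widehat{d^nx^\mu}\}$ is a local frame of $\Lambda^n_1T^*E$ and $(x^\mu,q^a,p,p^\mu_a)$ are coordinates. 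To see this frame matches $M(\pi)$, note that the point of $S_y$ with jet coordinates $v^a_\mu$ is the linear section $\sigma\colon\tfrac{\partial}{\partial x^\mu}\big|_x\mapsto\tfrac{\partial}{\partial x^\mu}\big|_y+\sum_a v^a_\mu\tfrac{\partial}{\partial q^a}\big|_y$; a direct computation, using $\sigma^*dx^\nu=dx^\nu$, $\sigma^*dq^a=\sum_\nu v^a_\nu\,dx^\nu$ and the sign built into $\widehat{d^nx^\mu}$, gives
\[
\sigma^*\eta=\Bigl(p+\textstyle\sum_{\mu,a}p^\mu_a\,v^a_\mu\Bigr)\,dx^1\wedge\cdots\wedge dx^n ,
\]
which is affine in the $v^a_\mu$, so $\hat\eta\colon\sigma\mapsto\sigma^*\eta$ is the affine map associated to $\eta$ and the coefficients $(p,p^\mu_a)$ transport correctly under the isomorphism of (1).

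For (2) I would describe the maps of the diagram explicitly. The inclusion $\Lambda^n_1T^*E\hookrightarrow\Lambda^nT^*E$ is tautological; the quotient $\Lambda^n_1T^*E\to\Lambda^n_1T^*E/\Lambda^n_0T^*E$ is $(x^\mu,q^a,p,p^\mu_a)\mapsto(x^\mu,q^a,p^\mu_a)$ in coordinates, and under (1) it is precisely the canonical surjection $\mu\colon M(\pi)\to P(\pi)$ onto the twisted dual; hence the upper square commutes, and the triangles formed with the oblique bundle projections to $E$ commute trivially. To identify the quotient with the asserted tensor bundle, the lemma gives the model of the $\Lambda^nT_x^*\Sigma$-twisted dual of $S_y$ as $\mathrm{Hom}_\mathbb R\bigl(\mathrm{Hom}_\mathbb R(T_x\Sigma,V_y(\pi)),\Lambda^nT_x^*\Sigma\bigr)\cong V_y^*(\pi)\otimes T_x\Sigma\otimes\Lambda^nT_x^*\Sigma$, and the contraction isomorphism $T_x\Sigma\otimes\Lambda^nT_x^*\Sigma\xrightarrow{\sim}\Lambda^{n-1}T_x^*\Sigma$, $v\otimes\mathrm{vol}\mapsto\iota_v\mathrm{vol}$, rewrites it as $V_y^*(\pi)\otimes\Lambda^{n-1}T_x^*\Sigma$. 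Globalising gives $P(\pi)\cong V^*(\pi)\otimes\pi^*(\Lambda^{n-1}T^*\Sigma)\cong\pi^*(T\Sigma)\otimes V^*(\pi)\otimes\pi^*(\Lambda^nT^*\Sigma)$, with the frame $\{dq^a\wedge\widehat{d^nx^\mu}\}$ corresponding to the standard frame of $V^*(\pi)\otimes\pi^*(\Lambda^{n-1}T^*\Sigma)$, so the induced local coordinates on $P(\pi)$ are $(x^\mu,q^a,p^\mu_a)$, compatibly with those on $M(\pi)$.

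The only genuine obstacle is conventional rather than conceptual: one must check that the contraction isomorphism $T\Sigma\otimes\Lambda^nT^*\Sigma\cong\Lambda^{n-1}T^*\Sigma$ and the sign in $\widehat{d^nx^\mu}=(-1)^{\mu+1}dx^1\wedge\cdots\wedge dx^{\mu-1}\wedge dx^{\mu+1}\wedge\cdots\wedge dx^n$ are matched so that the $p^\mu_a$ read off from $\sigma^*\eta$ on $M(\pi)$ coincide, with no stray sign, with the $p^\mu_a$ coming from the $V^*(\pi)\otimes\pi^*(\Lambda^{n-1}T^*\Sigma)$ description of $P(\pi)$; the identity $\sigma^*(dq^a\wedge\widehat{d^nx^\mu})=v^a_\mu\,dx^1\wedge\cdots\wedge dx^n$ used above is exactly this check. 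Beyond this, one need only remark that every fiberwise construction is smooth in $y$, being a natural multilinear-algebra operation applied to the smooth exact sequence $0\to V(\pi)\to TE\to\pi^*T\Sigma\to 0$.
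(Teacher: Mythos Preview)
Your proposal is correct and follows exactly the paper's approach: apply the preceding linear-algebra lemma fiberwise over $E$ and use the contraction isomorphism $T_x\Sigma\otimes\Lambda^nT_x^*\Sigma\cong\Lambda^{n-1}T_x^*\Sigma$ for the final identification of $P(\pi)$. You have simply unpacked the coordinate details and sign checks that the paper leaves implicit.
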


\begin{proof}
	Follows from the preceding linear algebra. For the last identification of $P(\pi)$ we use that, given an $n$-dimensional $\mathbb K$-vector space $V$, the map $V\otimes \Lambda^nV^*\to \Lambda^{n-1}V^*$, $v\otimes \mu\mapsto \iota_v\mu$ is a linear isomorphism.
\end{proof}

\begin{remark}$ $
	\begin{enumerate}
		\item The manifolds associated to the fibration $E\overset{\pi}{\to}\Sigma$ can be resumed by the following diagram of fibrations
		\[
		\xymatrix{
				\Lambda^n_1T^ *E \ar[d] \ar[r]^\cong&M(\pi) \ar[d]_{\mu}&\\
				\Lambda^n_1T^*E/\Lambda^n_0T^*E \ar[r]^{~~~~~\cong}&P(\pi) \ar@/_2pc/[dd]_{\tau} \ar[d]_{\kappa} &J^1\pi \ar[dl]_{\pi_{1,0}} \ar[ddl]^{\pi_1}\\
				&E \ar[d]^Q_\pi&\\
				&\Sigma&
			}		
		\]
		We put here, for later use, $\tau:=\pi\circ\kappa$.
		\item In case $\Sigma=I\subseteq \mathbb R$ is an interval and $E=I\times Q$ is trivialized, we obtain the following canonical identifications: $J^1\pi=I\times TQ$, $P(\pi)=I\times T^ *Q$, $M(\pi)=T^ *(I\times Q)$, i.e. we are in the situation of time-dependent classical mechanics revisited in Subsection \ref{subsec1}. For this analogy, the space $P(\pi)$ is sometimes called the ``(simply) extended multiphase space'' and $M(\pi)$ the ``doubly extended multiphase space''. Further choices of standard ``physical'' terminology include ``restricted (resp. extended) multimomentum bundle'' for $P(\pi)$ resp. $M(\pi)$.
		\item More generally, we can identify all manifolds in 1. in the case $E=\Sigma\times Q$: \[J^1\pi\cong \pi^*(T^ *\Sigma)\otimes (\mathrm{proj}_Q)^*TQ=T^*\Sigma\otimes TQ\to E\] (suppressing pullbacks for shortness here). Furthermore \[P(\pi)=V^ *(\pi)\otimes \pi^ *(\Lambda^{n-1}T^ *\Sigma)=T^*Q\otimes \Lambda^{n-1}T^*\Sigma\to E \]  and finally 
		\[M(\pi)=\Lambda^nT^*\Sigma \oplus (T^ *Q\otimes \Lambda^{n-1}T^*\Sigma) \to E,\] projecting as a vector bundle over $E$ in the obvious way onto $P(\pi)$.
	\end{enumerate}
\end{remark}

\begin{remark}\label{lemmj}The fibration $\mu:M(\pi)\to P(\pi)$ is always an affine real line bundle with associated (linear) real line bundle $\tau^*(\Lambda^n T^ *\Sigma)$.
\end{remark}

\subsection{Hamiltonian approach to classical field theories}\label{subsec1-3}
We  begin by explaining the transition from the Lagrangian to the Hamiltonian formulation of classical field theories. Then we give -in a purely Hamiltonian setting- various equivalent formulations of the condition that a section of a bundle $\pi:E\overset{Q}{\to} \Sigma$ (a ``field'' in physical lingo) is a solution of a given classical field theory. Our efforts culminate in the second condition of Theorem \ref{fin1thm}, that allows to formulate Hamiltonian dynamics on arbitrary multisymplectic manifolds at the end of Section \ref{sec:2}.\\

The analogues of the canonical 1- and 2-form on a cotangent bundle are described by the following easy but fundamental
\begin{propdef}[Tautological forms on multicotangent bundles]~
	\begin{enumerate}
		\item Let $Y$ be a smooth manifold of dimension at least $n\geq 1$. Then the ``multicotangent bundle (or multimomentum space)'' $\Lambda^nT^ *Y$ carries a ``tautological or canonical $n$-form'' $\theta^{\Lambda^nT^ *Y}$ defined by 
		\begin{align*}
		\theta_{\eta_y}^{\Lambda^nT^ *Y} (u_1,...,u_n)=\eta_y( (\mathrm{proj}_Y)_*(u_1),...,(\mathrm{proj}_Y)_*(u_n))\\\forall y\in Y,\forall \eta_y\in \Lambda^nT^*Y, \forall  u_1,...,u_n\in T_{\eta_y}(\Lambda^nT^ *Y),
		\end{align*}
		and a ``canonical $(n{+}1)$-form'' $\omega^{\Lambda^nT^ *Y}:=-d\theta^ {\Lambda^nT ^ *Y}$. If $y^1,...,y^N$ are local coordinates  on $U\subset Y$, then $(p_I, y^i)$ are the local coordinates describing $\sum_Ip_Idy^I$ in $\Lambda^nT^*Y$, where $I=(i_1,...,i_n)$ are strictly ascending multiindices and $dy^I=dy^{i_1}\wedge ...\wedge dy^{i_n}$. With respect to these coordinates we have $\theta^{\Lambda^nT^*Y}=\sum_Ip_Idy^I$, and consequently $\omega^{\Lambda^nT^*Y}=-\sum_Idp_I\wedge dy^I$. 
	
		The form $\omega$ is ``non-degenerate'', i.e. $\forall y\in Y, \forall\eta_y \in \Lambda^nT^*_yY$, 
		\[(\omega_{\eta_y}^{\Lambda^nT^ *Y})^\#:T_{\eta_y}(\Lambda^nT^*Y)\to \Lambda^n T^*_{\eta_y}(\Lambda^nT^*Y), \]
		given by the contraction of a tangent vector with the $(n{+}1)$-form $\omega_{\eta_y}^{\Lambda^nT^*Y}$, is injective.
		\item Let $Y=E\overset{\pi}{\to}\Sigma$ be a fibration over the $n$-dimensional manifold $\Sigma$ with typical fiber $Q$ and $1\leq k\leq n$. Then the pullback of $\omega^{\Lambda^nT^*E}$ to $\Lambda^n_kT^ *E$ is non-degenerate as well.
	\end{enumerate}
\end{propdef}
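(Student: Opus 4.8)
The plan is to keep everything local: choose adapted coordinate charts and reduce both assertions to elementary exterior algebra in a single fibre of a cotangent-type bundle. The non-computational claims in part~1 are then immediate. The map $\mathrm{proj}_Y\colon\Lambda^nT^*Y\to Y$ is smooth, so for $\eta_y\in\Lambda^nT^*Y$ the assignment $(u_1,\dots,u_n)\mapsto\eta_y((\mathrm{proj}_Y)_*u_1,\dots,(\mathrm{proj}_Y)_*u_n)$ is visibly alternating and $\mathbb R$-multilinear in the $u_i$, hence a well-defined element of $\Lambda^nT^*_{\eta_y}(\Lambda^nT^*Y)$; evaluating on the coordinate fields $\partial/\partial y^i$ and $\partial/\partial p_I$ (the projection kills the latter and fixes the former) yields $\theta^{\Lambda^nT^*Y}=\sum_I p_I\,dy^I$, which is smooth, and then $\omega^{\Lambda^nT^*Y}=-d\theta^{\Lambda^nT^*Y}=-\sum_I dp_I\wedge dy^I$ because the $dy^I$ are closed. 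So the substance is the two non-degeneracy statements.

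For part~1 I would take $v=\sum_i a^i\,\partial/\partial y^i+\sum_I b^I\,\partial/\partial p_I$ at $\eta_y$ with $\iota_v\omega^{\Lambda^nT^*Y}=0$ and contract: $\iota_v\omega^{\Lambda^nT^*Y}=-\sum_I b^I\,dy^I+\sum_I dp_I\wedge\iota_{v_0}dy^I$, where $v_0=\sum_i a^i\,\partial/\partial y^i$. The first sum is built from coordinate monomials with no $dp$-factor, the second from monomials with exactly one $dp$-factor, so by linear independence of the standard monomial basis of $\Lambda^\bullet T^*_{\eta_y}(\Lambda^nT^*Y)$ each vanishes separately. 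The first gives all $b^I=0$; grouping the second by the covector $dp_I$ forces $\iota_{v_0}dy^I=\sum_{l=1}^n(-1)^{l-1}a^{i_l}\,dy^{I\setminus\{i_l\}}=0$ for every $I$, and since the $(n-1)$-fold wedges $dy^{I\setminus\{i_l\}}$ are linearly independent, $a^{i_l}=0$ for every $i_l\in I$. Because $\dim Y\geq n$, every index $j$ lies in some strictly ascending $n$-multiindex $I$, so all $a^j=0$ and $v=0$.

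For part~2 I would fix adapted coordinates $(x^\mu,q^a)$ on $E$ (so $V(\pi)=\langle\partial/\partial q^a\rangle$) and observe that, inside $\Lambda^nT^*E$, the sub vector bundle $\Lambda^n_kT^*E$ is cut out by requiring that the coefficient of $dq^A\wedge dx^B$ vanish whenever $|A|>k$; hence it carries fibre coordinates $p^{(j)}_{A,B}$ with $j=|A|\in\{0,\dots,k\}$ and $|B|=n-j$, generalising the coordinates $(p,p^\mu_a)$ of Proposition~\ref{propj}. Writing $\iota\colon\Lambda^n_kT^*E\hookrightarrow\Lambda^nT^*E$ for the inclusion, one gets $\iota^*\theta^{\Lambda^nT^*E}=\sum_{j\le k}\sum_{A,B}p^{(j)}_{A,B}\,dq^A\wedge dx^B$ and, by naturality of $d$, $\iota^*\omega^{\Lambda^nT^*E}=-d(\iota^*\theta^{\Lambda^nT^*E})=-\sum_{j\le k}\sum_{A,B}dp^{(j)}_{A,B}\wedge dq^A\wedge dx^B$. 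Then I would rerun the contraction argument: for $v=\sum\alpha^\mu\,\partial/\partial x^\mu+\sum\beta^a\,\partial/\partial q^a+\sum\gamma_{j,A,B}\,\partial/\partial p^{(j)}_{A,B}$ in the kernel, the $dp$-free part forces all $\gamma_{j,A,B}=0$; grouping the remaining single-$dp$ part by the covector $dp^{(j)}_{A,B}$ gives $\iota_{v_0}(dq^A\wedge dx^B)=0$ for every admissible $(j,A,B)$, where $v_0=\sum\alpha^\mu\,\partial/\partial x^\mu+\sum\beta^a\,\partial/\partial q^a$; expanding and separating $dq$-degree from $dx$-degree forces $\beta^a=0$ for every $a\in A$ and $\alpha^\mu=0$ for every $\mu\in B$. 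Choosing $(j,A,B)=(0,\emptyset,(1,\dots,n))$ kills every $\alpha^\mu$, and choosing $j=1$, $A=\{a\}$ with $B$ any $(n-1)$-multiindex kills every $\beta^a$; the hypothesis $k\geq1$ is exactly what guarantees these $j=1$ coordinates exist.

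The main obstacle is the bookkeeping in part~2: one must verify that when $\iota_{v_0}(dq^A\wedge dx^B)$ is expanded the monomials $dp^{(j)}_{A,B}\wedge dq^{A'}\wedge dx^{B'}$ that occur are genuinely distinct elements of the standard basis of $\Lambda^\bullet T^*_{\eta_y}(\Lambda^n_kT^*E)$ (so that ``every coefficient vanishes'' is legitimate) and that the ``derived-from-$dq$'' and ``derived-from-$dx$'' terms never collide, because they sit in different bidegrees. I would also stress that $k\geq1$ cannot be dropped: for $k=0$ one has $\Lambda^n_0T^*E\cong\pi^*\Lambda^nT^*\Sigma$ with $\iota^*\omega^{\Lambda^nT^*E}=-dp\wedge dx^1\wedge\dots\wedge dx^n$, which annihilates every $\partial/\partial q^a$ and is degenerate — so it is precisely the $j=1$ fibre directions that let $\omega$ detect the vertical bundle. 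No global or analytic input enters anywhere; the whole argument collapses to linear algebra in one chart.
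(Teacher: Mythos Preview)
Your proof is correct and follows essentially the same approach as the paper's: both reduce to local coordinates and then to elementary linear algebra in a single fibre, using the explicit expression $\omega=-\sum_I dp_I\wedge dy^I$ and, for part~2, the restricted index set coming from the $\Lambda^n_k$-condition. The only cosmetic difference is that the paper, given a nonzero $v_0$, directly exhibits vectors $v_1,\dots,v_n$ with $\omega(v_0,\dots,v_n)\neq0$, whereas you compute $\iota_v\omega$ as an $n$-form and argue by linear independence of monomials; these are dual presentations of the same computation, and your added observation that $k=0$ fails (because the $j=1$ fibre coordinates are needed to detect $\partial/\partial q^a$) is a worthwhile sharpening not stated in the paper.
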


\begin{proof}
	To see the non-degeneracy of $\omega^{\Lambda^nT^*Y}$ let $v_0=\sum_ia_i\frac{\partial}{\partial y^i}+ \sum_Ib_I\frac{\partial}{\partial p_I}$ be a non-zero tangent vector to $Y$. If there exists an $I=(i_1,...,i_n)$ such that $b_I\neq 0$, then $v_j=\frac{\partial}{\partial y_{i_j}}$ for $j\in\{1,...,n\}$ satisfy $\omega^{\Lambda^nT^*Y}(v_0,v_1,...,v_n)=b_I\neq 0$. If $b_I$ is zero for all $I$, then there is at least one $i$ such that $a_i\neq 0$. Without loss of generality we assume $i=1$. Then for $v_1=\frac{\partial}{\partial p_{(1,...,n)}}$, and $v_j=\frac{\partial}{\partial y_j}$ for $j\in\{2,...,n\}$ we have $\omega^{\Lambda^nT^*Y}(v_0,v_1,...,v_n)=-a_1\neq 0$. Hence $\iota_{v_0}\omega\neq 0$ for all nonzero $v_0$, i.e. $\omega^{\Lambda^nT^*Y}$ is non-degenerate. For the subbundles we can choose a chart of $Y$ such that $y_1,..,y_n$ is a chart of $\Sigma$. Then $\Lambda^n_kT^*U$ has coordinates $(p_I,y^i)$, where now $I$ runs through all multi-indices which contain at most $k$ of the elements $\{1,...,n\}$. The conditions on $k$ guarantee, that such multiindices exist. Again,  $\omega^{\Lambda^nT^*Y}|_{\Lambda_k^nT^*Y}=-\sum_Idp_I\wedge dy^I$ with the new (restricted) index subset $I$, and non-degeneracy can be shown as for $\omega^{\Lambda^nT^*Y}$.
\end{proof}

\begin{corollary}
	Let $\pi:E\overset{Q}{\to}\Sigma$ be as in part 2 of the preceding definition, $(x^1,...,x^n)$ local coordinates on $\Sigma$, $(q^a)$ local coordinates on $Q$ and for $y\in E$, elements of $\Lambda^n_1T^*_yE$ written as follows 
	\[
	\eta_y=p(d^nx)_y+\sum_{a}(dq^a)_y\wedge (\sum_\mu p^\mu_a(\widehat{d^nx^\mu})_y),
	\]
	where $d^nx$ and $\widehat{d^nx^\mu}$ are as in Proposition \ref{propj} above. Then, near $y$, $\theta:=\theta^{\Lambda^n_1T^*E}=pd^nx + \sum_{a,\mu}p^\mu_a dq^a\wedge \widehat{d^nx^\mu}$ and $\omega:=\omega^{\Lambda^n_1T^*E}=-dp\wedge d^nx - \sum_{a,\mu}dp^\mu_a\wedge dq^a\wedge \widehat{d^nx^\mu}$.
\end{corollary}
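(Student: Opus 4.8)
The plan is to derive both the tautological $n$-form and the canonical $(n{+}1)$-form on $\Lambda^n_1T^*E$ directly from their counterparts on the full multicotangent bundle $\Lambda^nT^*E$ by restriction, combined with the explicit coordinate description of $\Lambda^n_1T^*E$ already obtained in Proposition \ref{propj}. Concretely, recall from the Proposition / Definition that on $\Lambda^nT^*E$ with local coordinates $(p_I,y^i)$ one has $\theta^{\Lambda^nT^*E}=\sum_I p_I dy^I$ and $\omega^{\Lambda^nT^*E}=-\sum_I dp_I\wedge dy^I$, and from the non-degeneracy proof that choosing a chart adapted to $\pi$ (so that $y^1,\dots,y^n=x^1,\dots,x^n$ are coordinates pulled back from $\Sigma$ and $y^{n+1},\dots,y^{n+N}=q^1,\dots,q^N$ are fiber coordinates) the submanifold $\Lambda^n_1T^*E$ is cut out by requiring $p_I=0$ for all strictly ascending multiindices $I$ containing two or more of the "vertical" indices. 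Thus the surviving coordinates are exactly $p:=p_{(1,\dots,n)}$ and, for each $a$ and each $\mu$, the coefficient $p^\mu_a$ of $dq^a\wedge\widehat{d^nx^\mu}$, in agreement with part 1 of Proposition \ref{propj}.

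First I would make precise the identification of the inclusion $\iota:\Lambda^n_1T^*E\hookrightarrow\Lambda^nT^*E$ in these coordinates: it is the map sending $(x^\mu,q^a,p^\mu_a,p)$ to the point of $\Lambda^nT^*E$ whose only nonzero $p_I$-coordinates are $p_{(1,\dots,n)}=p$ and the $p_I$ with $I$ obtained from $(1,\dots,\widehat{\mu},\dots,n)$ together with one vertical index $q^a$ — up to the sign $(-1)^{\mu+1}$ built into $\widehat{d^nx^\mu}$. Then the tautological form pulls back as $\theta^{\Lambda^n_1T^*E}=\iota^*\theta^{\Lambda^nT^*E}$; one can see this either from the intrinsic definition (the defining formula for $\theta$ only refers to the basepoint projection to $E$ and to the tautological pairing, both of which are compatible with $\iota$, since a point of $\Lambda^n_1T^*_yE$ viewed inside $\Lambda^nT^*_yE$ pairs with tangent vectors of $\Lambda^n_1T^*E$ via their pushforward to $E$ exactly as prescribed), or simply by substituting the coordinate expression of $\iota$ into $\sum_I p_I dy^I$ and collecting terms, which yields $p\,d^nx+\sum_{a,\mu}p^\mu_a\,dq^a\wedge\widehat{d^nx^\mu}$. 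Finally, since exterior derivative commutes with pullback, $\omega^{\Lambda^n_1T^*E}=-d\theta^{\Lambda^n_1T^*E}=-d\bigl(p\,d^nx+\sum_{a,\mu}p^\mu_a\,dq^a\wedge\widehat{d^nx^\mu}\bigr)=-dp\wedge d^nx-\sum_{a,\mu}dp^\mu_a\wedge dq^a\wedge\widehat{d^nx^\mu}$, using that $d^nx$ and each $\widehat{d^nx^\mu}$ are closed.

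The only genuinely delicate point is bookkeeping with the signs and with the identification of which $p_I$ on $\Lambda^nT^*E$ corresponds to $p^\mu_a$ on $\Lambda^n_1T^*E$: the convention $\widehat{d^nx^\mu}=(-1)^{\mu+1}dx^1\wedge\cdots\wedge\widehat{dx^\mu}\wedge\cdots\wedge dx^n$ must be tracked so that the coefficient of $dq^a\wedge\widehat{d^nx^\mu}$ matches, up to the correct sign, the coefficient $p_I$ for the ascending reordering of the multiindex $\{1,\dots,n\}\setminus\{\mu\}\cup\{\text{position of }q^a\}$. I expect this to be the main obstacle — not conceptually, but because it is easy to drop a sign — and I would handle it by verifying the $n=1$ case (where $\Lambda^1_1T^*E=T^*E$ and the formula reduces to the familiar $\theta=p\,dx+p_a\,dq^a$) and then one higher-dimensional example, e.g. $n=2$, before asserting the general coordinate formula. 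Everything else reduces to the linear-algebra statement of Proposition \ref{propj} (that $\Lambda^n_1T^*E$ is exactly the span of $d^nx$ and the $dq^a\wedge\widehat{d^nx^\mu}$) together with the naturality of $\theta$ and $\omega$ under the inclusion, so no further computation is needed.
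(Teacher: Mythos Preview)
Your approach is correct and matches the paper's own reasoning: the paper gives no separate proof of this Corollary because it regards it as an immediate specialization to $k=1$ of the coordinate computation already carried out in the proof of the preceding Proposition/Definition (where it is noted that $\omega^{\Lambda^nT^*Y}|_{\Lambda_k^nT^*Y}=-\sum_I dp_I\wedge dy^I$ with the restricted index set), combined with the coordinate labels $(x^\mu,q^a,p^\mu_a,p)$ introduced in Proposition \ref{propj}. Your pullback-by-inclusion argument is exactly this, spelled out in more detail; the sign bookkeeping you flag is real but routine, and your suggestion to check $n=1,2$ by hand is a sensible way to confirm the conventions.
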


Let us from now to the end of this section fix a $Q$-fiber bundle $\pi:E\to \Sigma$ with $n$-dimensional base. Recall that $M(\pi)=\mathrm{Aff}(J^1\pi,\pi^*\Lambda^nT^ *\Sigma)$ is isomorphic to $\Lambda^n_1T^ *E$ and $P(\pi) =\mathrm{Aff}(J^1\pi,$ $\pi^ *\Lambda^n T^ *\Sigma)/\pi^ *(\Lambda^nT^ *\Sigma)$ is isomorphic to $\Lambda^n_1T^ *E/\Lambda^n_0T^ *E$, the bundle $M(\pi)\overset{\mu}{\to}P(\pi)$ being an affine real line bundle (cf. Remark \ref{lemmj}).\\

Before discussing classical field theory in the Hamiltonian approach, let us rapidly review the more standard Lagrangian approach.

\begin{definition}
Let $\mathcal L:J^1\pi\to \pi^*\Lambda^nT^ *\Sigma$ be a smooth map over $E$. We call $\mathcal L$ a ``Lagrangian density''.
\begin{enumerate}
	\item The ``Legendre transformation (associated to $\mathcal L$)'' is the smooth map $\mathbb F\mathcal L:J^1\pi\to \mathrm{Aff}(J^1\pi,$ $\pi^ *\Lambda^nT^ *\Sigma)$ given by $((\mathbb F\mathcal{L})(v))(w)=\mathcal{L}(v)+\frac{d}{d\epsilon}\big|_0\mathcal L(v+\epsilon(w-v))$, where $v,w\in (J^1\pi)_y$ for $y\in E$ and $w-v\in \mathrm{Hom}_\mathbb R(T_{\pi(y)}\Sigma, V_y(\pi))$, the vector space model of the affine space $(J^1\pi)_y$.
	\item We call a Lagrangian density $\mathcal L$ ``regular resp. hyper-regular'' if the map $\mu\circ \mathbb F\mathcal L:J^1\pi\to P(\pi)$ is a local diffeomorphism resp. a diffeomorphism. In these cases we call $h:=(\mathbb F \mathcal L)\circ (\mu \circ \mathbb F\mathcal L)^{-1} $ the ``(local) Hamiltonian section associated to $\mathcal L$''.
\end{enumerate}	
\end{definition}

\begin{remark}$ $
	\begin{enumerate}
		\item Often the map $\mu \circ \mathbb F\mathcal L$ is called the Legendre transformation associated to $\mathcal L$. We stick to the convention that $\mathbb F\mathcal L$ is the Legendre transformation.
		\item In case $\mu$ is trivialized, $h$ is an $\mathbb R$-valued function. This is the typical case in the classical field theories considered in physics (compare Remark \ref{voltref} below). 
		\item In local coordinates on $J^1\pi$ and $M(\pi)=\mathrm{Aff}(J^1\pi,\pi^ *\Lambda^n T^ *\Sigma)$ we can assume that $\mathcal L=Ldx^1\wedge ...\wedge dx^n$ and we have $(x^\mu, q^a,p^\mu_a,p)(x^\mu,q^a,v^a_\mu)=(x^\mu,q^a,p+\sum_{a,\mu}p^\mu_av^a_\mu)$, if we consider $(x^\mu, q^a,p^\mu_a,p)$ as an affine map. Furthermore,\newline $(\mathbb F\mathcal L)(x^\mu,q^a,v^a_\mu)=(x^\mu,q^a,\frac{\partial L}{\partial v^a_\mu},L-\sum_{a,\mu}\frac{\partial L}{\partial v^a_\mu}\cdot v^a_\mu)$ and $\mathcal{ L}$ is regular if $p^\mu_a=\frac{\partial L}{\partial v^a_\mu}(x^\nu,q^b,v^b_\nu)$ is locally solvable to obtain $v^b_\nu=G_\nu^b(x^\mu,q^a,p^\mu_a)$. (Observe that for $n=1,\mu=1$ and we have found the standard expression for $H$ in terms of $L$, well-known from classical mechanics $H=L-\sum_a \frac{\partial L}{\partial v^a}v^a$.)
	\end{enumerate}
\end{remark}

\begin{lemma}
	Let $\mathcal L$ be a Lagrangian density and $\phi$ a section of $\pi:E\to \Sigma$, then $\mathcal L\circ j^1\phi=(j^1\phi)^*(\theta_\mathcal L)$, where $\theta_\mathcal L=(\mathbb F\mathcal{L})^*\theta$, and the following are equivalent:
	\begin{enumerate}
		\item $\phi$ is critical for the functional $\mathbb L[\phi]:=\int_{\Sigma} \mathcal L(j^1\phi)$ on sections of $E\overset{\pi}{\to}\Sigma$,
		\item in local coordinates $\phi$ satisfies the following ``Euler-Lagrange equations'':
		\[
		\forall a\in \{1,...,N\},~~ \frac{\partial L}{\partial q^a}((j^1\phi)(x))=\sum_{\mu} \frac{\partial}{\partial x^\mu}\left( \frac{\partial L}{\partial v^a_\mu}\circ (j^1\phi)(x) \right).
		\]
	\end{enumerate}
\end{lemma}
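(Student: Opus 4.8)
The plan is to treat the two assertions separately: first the pointwise identity $\mathcal L\circ j^1\phi=(j^1\phi)^*(\theta_\mathcal L)$, which is essentially a bookkeeping statement about the identifications already in place, and then the equivalence $(1)\Leftrightarrow(2)$, which is the classical first-variation computation.

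For the identity I would argue as follows. Write $(j^1\phi)^*\theta_\mathcal L=(j^1\phi)^*(\mathbb F\mathcal L)^*\theta=s^*\theta$, where $\theta=\theta^{\Lambda^n_1T^*E}$ is the canonical $n$-form and $s:=\mathbb F\mathcal L\circ j^1\phi:\Sigma\to M(\pi)\cong\Lambda^n_1T^*E$ is a map covering $\phi:\Sigma\to E$. The same computation that proves $\alpha^*\theta^{\Lambda^nT^*Y}=\alpha$ for an $n$-form $\alpha$ viewed as a section (i.e.\ the tautological property used already in the Proposition/Definition above) shows that $(s^*\theta)_x=(T_x\phi)^*\bigl(s(x)\bigr)$, the pullback through $T_x\phi$ of the element $s(x)\in\Lambda^n_1T^*_{\phi(x)}E$. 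Under the identification of Proposition \ref{propj} and the affine-duality lemma, $s(x)=(\mathbb F\mathcal L)(j^1_x\phi)$ is the affine map on $(J^1\pi)_{\phi(x)}$ that, evaluated on the linear section $T_x\phi$ — that is, on $j^1_x\phi$ itself — returns exactly $(T_x\phi)^*\bigl(s(x)\bigr)$; and by the very definition of the Legendre transformation one has $\bigl((\mathbb F\mathcal L)(v)\bigr)(v)=\mathcal L(v)$, since the derivative term $\tfrac{d}{d\epsilon}\big|_0\mathcal L(v+\epsilon(w-v))$ vanishes for $w=v$. Hence $(j^1\phi)^*\theta_\mathcal L|_x=\mathcal L(j^1_x\phi)$. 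A quicker route for the reader is the coordinate computation: with $\theta=p\,d^nx+\sum_{a,\mu}p^\mu_a\,dq^a\wedge\widehat{d^nx^\mu}$ from the Corollary, $(\mathbb F\mathcal L)^*p=L-\sum_{b,\nu}\tfrac{\partial L}{\partial v^b_\nu}v^b_\nu$ and $(\mathbb F\mathcal L)^*p^\mu_a=\tfrac{\partial L}{\partial v^a_\mu}$ from the Remark, together with $(j^1\phi)^*(dq^a)=\sum_\nu\tfrac{\partial(q^a\circ\phi)}{\partial x^\nu}\,dx^\nu$ and $(j^1\phi)^*(dq^a\wedge\widehat{d^nx^\mu})=\tfrac{\partial(q^a\circ\phi)}{\partial x^\mu}\,d^nx$; the two $\tfrac{\partial L}{\partial v^a_\mu}v^a_\mu$-terms cancel and one is left with $(j^1\phi)^*\theta_\mathcal L=L(j^1\phi)\,d^nx=\mathcal L\circ j^1\phi$.

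For the equivalence I would first reduce everything to a local statement: $\phi$ is critical iff $\tfrac{d}{d\epsilon}\big|_0\mathbb L[\sigma_\epsilon^X\circ\phi]=0$ for every compactly supported vertical vector field $X$ on $E$, and by a partition-of-unity argument it suffices to test with $X$ supported over a single coordinate chart; the Euler--Lagrange equations are local as well. In such a chart, with $\mathcal L=L\,d^nx$, set $\phi^a_\epsilon:=q^a\circ\sigma_\epsilon^X\circ\phi$, so that the $\xi^a:=\tfrac{d}{d\epsilon}\big|_0\phi^a_\epsilon$ are arbitrary compactly supported functions on the chart and $\tfrac{d}{d\epsilon}\big|_0\partial_\mu\phi^a_\epsilon=\partial_\mu\xi^a$. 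Differentiating $\mathbb L[\sigma_\epsilon^X\circ\phi]=\int L(x^\mu,\phi^a_\epsilon,\partial_\mu\phi^a_\epsilon)\,d^nx$ under the integral sign, then integrating the $\tfrac{\partial L}{\partial v^a_\mu}\partial_\mu\xi^a$ terms by parts (no boundary contribution, since $\xi^a$ has compact support), yields $\tfrac{d}{d\epsilon}\big|_0\mathbb L=\int\sum_a\bigl(\tfrac{\partial L}{\partial q^a}\circ j^1\phi-\sum_\mu\tfrac{\partial}{\partial x^\mu}(\tfrac{\partial L}{\partial v^a_\mu}\circ j^1\phi)\bigr)\xi^a\,d^nx$, and the fundamental lemma of the calculus of variations makes this vanish for all admissible $\xi^a$ precisely when the bracket vanishes identically, which is $(2)$. (Invariantly one may also write $\tfrac{d}{d\epsilon}\big|_0\mathbb L=\int_\Sigma(j^1\phi)^*\mathcal L_{\tilde X}\theta_\mathcal L=-\int_\Sigma(j^1\phi)^*\iota_{\tilde X}\,\omega_\mathcal L$ with $\tilde X$ the $1$-jet prolongation of $X$ and $\omega_\mathcal L:=-d\theta_\mathcal L$, the term $\int(j^1\phi)^*d\iota_{\tilde X}\theta_\mathcal L$ dropping by Stokes; but one still needs the chart computation to read off the Euler--Lagrange form.)

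I do not expect a serious obstacle, as the content is classical. The one place that genuinely deserves care is the first identity, where one must correctly match the three identifications in play — $M(\pi)=\mathrm{Aff}(J^1\pi,\pi^*\Lambda^nT^*\Sigma)$, its realization as $\Lambda^n_1T^*E$ from Proposition \ref{propj}, and the tautological property of $\theta$ — and, crucially, observe that evaluating the affine map $(\mathbb F\mathcal L)(j^1_x\phi)$ on the jet $j^1_x\phi$ itself recovers $\mathcal L(j^1_x\phi)$ by the precise definition of the Legendre transformation. In the variational part the only subtlety is the reduction to chart-supported variations and keeping the compact-support hypothesis so that the exact/boundary terms disappear.
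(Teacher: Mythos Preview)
Your proposal is correct and follows essentially the same route as the paper: the coordinate computation for the identity $(j^1\phi)^*\theta_{\mathcal L}=\mathcal L\circ j^1\phi$ (with the cancellation of the $\tfrac{\partial L}{\partial v^a_\mu}v^a_\mu$ terms) and the first-variation/integration-by-parts argument for $(1)\Leftrightarrow(2)$ match the paper's proof almost verbatim. Your additional intrinsic argument for the identity---via the tautological property of $\theta$ and the observation $(\mathbb F\mathcal L)(v)(v)=\mathcal L(v)$---is a genuine conceptual improvement over the paper, which only gives the coordinate check.
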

\begin{proof}
	Using Proposition \ref{propj} and the above remarks we obtain in local coordinates
	$
	\theta=p\cdot d^nx+\sum_{a,\mu}p^\mu_adq^a\wedge \widehat{d^nx^\mu},
	$, $L=\mathcal L d^nx$ and $$(\mathbb F\mathcal L\circ j^1\phi)(x)=\left(x^\mu, \phi^a(x),\frac{\partial L}{\partial v^a_\mu}((j^1\phi)(x)),
	L((j^1\phi)(x))-\sum_{a,\mu}\frac{\partial L}{\partial v^a_\mu}((j^1\phi)(x))\cdot \frac{\partial \phi^a}{\partial x^\mu}(x)\right).$$
	Thus 
	\begin{align*}
	(j^1\phi)^*\theta_\mathcal L=(\mathbb F\mathcal L\circ j^1\phi)^ *\theta = L((j^1\phi)(x))d^nx-\sum_{a,\mu}\frac{\partial L}{\partial v^a_\mu}((j^1\phi)(x))\frac{\partial \phi^a}{\partial x^\mu}(x)d^nx\\+\sum_{a,\mu}\frac{\partial L}{\partial v^a_\mu}((j^1\phi)(x))\frac{\partial \phi^a}{\partial x^\mu}(x)dx^\mu\wedge \widehat{d^nx^\mu}=L((j^1\phi)(x))d^nx=(\mathcal{L}\circ j^1\phi)(x).
	\end{align*}
	Let now $X$ be a vertical vector field with compact support on $E$, in local coordinates $X=\sum_aX_a\frac{\partial }{\partial q^a}$ with $X_a=X_a(x^\mu, q^a)$, and let $(\sigma_\epsilon^X)_{\epsilon\in \mathbb R}$ be its flow on $E$. It is well-known how to lift (``prolong'') $\pi$-vertical vector fields on $E$ to vertical vector fields of $ \pi_1=\pi\circ \pi_{1,0}:J^1\pi\to \Sigma$ on the total space $J^1\pi$ (cf., e.g., \cite[Section 4.4]{MR989588}). In local coordinates $X$ prolongs to $\Sigma_aX_a\frac{\partial}{\partial q^a}+\sum_{a,\mu}\frac{\partial X_a}{\partial x^\mu}\frac{\partial }{\partial v^a_\mu}$. It follows that 
	\begin{align*}
	\frac{d}{d\epsilon}\bigg |_0\mathbb L[\sigma^X_\epsilon\circ \phi]
	&=\frac{d}{d\epsilon}\bigg |_0\int_{\Sigma}L((j^1\phi)(x))d^nx\\
	&=\int_{\Sigma}\frac{d}{d\epsilon}\bigg |_0L( x,(\sigma_\epsilon^X\circ \phi)(x),\frac{\partial}{\partial x^\mu}(\sigma^X_\epsilon\circ\phi)^a(x) )d^nx\\
	&=\int_ \Sigma \left(\sum_a \frac{\partial L}{\partial q^a}(\phi(x))X_a+\sum_{\mu,a}\frac{\partial L}{\partial v^a_\mu}( (j^1\phi) (x))\frac{\partial X_a}{\partial x^\mu}\right)d^nx\\
	&=\int_ \Sigma \left(
	\sum_a 
	X_a\left(
	\frac{\partial L}{\partial q^a}-\sum_{\mu}\frac{\partial}{\partial x^\mu}\frac{\partial L}{\partial v^a_\mu}
	\right)\right)d^nx
	\end{align*}
by partial integration. It follows that 
$\frac{d}{d\epsilon}\big|_0\mathbb L[\sigma^X_\epsilon\circ \phi]=0,~~~\forall X\in \Gamma(E,V(\pi))$ if and only if $\forall a\in \{1,...,n\}$
\[
\left(\frac{\partial L}{\partial q^a}\right)\circ (j^1\phi)(x)-\sum_{\mu}\left( \frac{\partial L}{\partial v^a_\mu}\circ (j^1\phi)(x) \right)=0.
\]
\end{proof}

\noindent The preceding lemma allows for an important generalisation.
\begin{proposition}
	Let $\pi:E\overset{Q}{\to} \Sigma$ a fiber bundle, $\mathcal L$ a Lagrangian density on $J^1\pi$ and $s$ a section of $\pi_1:J^1\pi\to \Sigma$. Then the following are equivalent:
	\begin{enumerate}
		\item $s$ is a critical section for $\mathbb L [s]:=\int_{\Sigma} \mathcal L\circ s=\int_{\Sigma}s^*\theta_{\mathcal L}$.
		\item $s^*(\iota_X\omega_{\mathcal L})=0$ for all $X\in\mathfrak X(J^1\pi)$ that are $\pi_1$-vertical, where $\omega_\mathcal L=-d\theta_\mathcal L=(\mathbb F\mathcal L)^*\omega$.
		\item $s^*(\iota_X\omega_\mathcal L)=0$ for all $X\in \mathfrak X(J^1\pi)$.
		\item $s=j^1\phi$ with $\phi$ a section of $\pi:E\to \Sigma$ and $\phi$ is critical for $\mathbb L[\phi]=\int_{\Sigma }\mathcal L\circ j^1\phi$.
	\end{enumerate}
\end{proposition}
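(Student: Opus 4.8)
The plan is to establish the cycle $4 \Rightarrow 3 \Rightarrow 2 \Rightarrow 1$ and then close the loop by showing $1 \Rightarrow 4$, where the hard direction is the last one. The first three implications are essentially formal. For $4 \Rightarrow 3$: if $s = j^1\phi$ then $s^*\theta_{\mathcal L} = \mathcal L\circ j^1\phi$ by the identity proved in the preceding Lemma, so $\mathbb L[s]$ reduces to $\mathbb L[\phi]$; moreover the prolongation map $\phi \mapsto j^1\phi$ identifies $\pi_1$-vertical variations of $s$ with prolonged variations of $\phi$, so criticality transfers, and the computation of $\frac{d}{d\epsilon}|_0 \mathbb L[\sigma_\epsilon^X \circ s] = -\int_\Sigma s^*(\iota_X \omega_{\mathcal L}) + \int_\Sigma s^* d(\iota_X\theta_{\mathcal L})$ via Cartan's formula $\mathcal L_X = d\iota_X + \iota_X d$, with the boundary term killed by compact support, gives $s^*(\iota_X\omega_{\mathcal L}) = 0$ for \emph{all} $X$ (not just vertical ones) since $\omega_{\mathcal L}$ is $(\mathbb F\mathcal L)^*\omega$ and the fibers over $E$ are contracted; this is exactly statement $3$. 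The implication $3 \Rightarrow 2$ is trivial (restrict the quantifier). And $2 \Rightarrow 1$ is the same variational computation run backwards: since $s$ being critical is tested precisely against $\pi_1$-vertical fields, $s^*(\iota_X\omega_{\mathcal L})=0$ for such $X$ forces $\frac{d}{d\epsilon}|_0\mathbb L[s] = 0$.

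The substantive step is $1 \Rightarrow 4$, i.e. showing that an arbitrary critical section $s$ of $\pi_1 : J^1\pi \to \Sigma$ must in fact be holonomic, $s = j^1\phi$ for $\phi = \pi_{1,0}\circ s$, after which its criticality for $\mathbb L[\phi]$ follows from $4 \Leftrightarrow 1$ applied in the holonomic case (or directly from the Lemma). The way I would do this is to work in the local coordinates $(x^\mu, q^a, v^a_\mu)$ on $J^1\pi$ and write $s(x) = (x^\mu, \phi^a(x), s^a_\mu(x))$, so that $\phi = \pi_{1,0}\circ s$ has components $\phi^a$, and one must show $s^a_\mu = \partial\phi^a/\partial x^\mu$. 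The point is to test the criticality condition $s^*(\iota_X \omega_{\mathcal L}) = 0$ against a cleverly chosen family of $\pi_1$-vertical vector fields on $J^1\pi$ — not only the prolongations $\sum_a X_a \partial_{q^a} + \sum_{a,\mu}\frac{\partial X_a}{\partial x^\mu}\partial_{v^a_\mu}$ that appeared in the Lemma, but also the ``purely fiber'' fields $\sum_{a,\mu} Y^a_\mu \partial_{v^a_\mu}$ that move only in the jet-fiber directions. Using $\omega_{\mathcal L} = (\mathbb F\mathcal L)^*\omega = -d(\frac{\partial L}{\partial v^a_\mu}) \wedge dq^a \wedge \widehat{d^nx^\mu} - \ldots$ and contracting with such a $Y$, the pullback $s^*(\iota_Y\omega_{\mathcal L})$ picks out, in the regular case where $\partial^2 L/\partial v^a_\mu \partial v^b_\nu$ is invertible, exactly the combination $\frac{\partial^2 L}{\partial v^a_\mu \partial v^b_\nu}\bigl(s^b_\nu - \frac{\partial \phi^b}{\partial x^\nu}\bigr)$ times $d^nx$; setting this to zero for all $Y$ yields $s^b_\nu = \partial\phi^b/\partial x^\nu$, i.e. $s = j^1\phi$.

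I expect the main obstacle to be precisely this holonomy argument, and in particular the dependence on regularity of $\mathcal L$: in the hyper-regular or regular case the Hessian $\partial^2 L/\partial v\partial v$ is (locally) invertible and the argument above goes through cleanly, but for a general (possibly degenerate) $\mathcal L$ the statement $1 \Leftrightarrow 4$ as written would need to be qualified, or one restricts attention to those $X$ and exploits the structure of $\omega_{\mathcal L}$ more carefully — presumably the intended reading here is that $\mathcal L$ is (hyper-)regular, matching the running assumptions of the subsection. The remaining care is bookkeeping: making sure the compactly supported vertical fields used in the variational computations are genuine vertical fields for $\pi_1$ (so that $dx^\mu$-components vanish and the integration by parts over $\Sigma$ produces no boundary contribution), and checking that the identity $\mathcal L\circ s = s^*\theta_{\mathcal L}$ — which the Lemma proved for $s = j^1\phi$ — is only needed on the holonomic locus, so it is legitimate to invoke it after the holonomy has been established. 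Once holonomy is in hand, the equivalence with statement $1$ for $\phi$ is exactly the content of the preceding Lemma, closing the chain.
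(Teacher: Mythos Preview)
The paper does not prove this proposition at all; it simply refers the reader to Garc\'ia \cite{MR0341531} and the survey of Rom\'an-Roy \cite{MR2559661}. So there is no in-paper argument to compare against, and your sketch is in fact \emph{more} than what the paper provides. Your overall architecture---the variational identity $\frac{d}{d\epsilon}\big|_0\mathbb L[\sigma^X_\epsilon\circ s]=-\int_\Sigma s^*(\iota_X\omega_{\mathcal L})$ giving $1\Leftrightarrow 2$, the tangent-plus-vertical splitting of $T(J^1\pi)$ along $\mathrm{im}(s)$ giving $2\Leftrightarrow 3$, and the test against the purely jet-fiber fields $\partial/\partial v^a_\mu$ to extract holonomy---is exactly the standard route taken in those references, and your regularity caveat for $1\Rightarrow 4$ is correct and well placed: without invertibility of the Hessian $\partial^2 L/\partial v\,\partial v$ the conclusion $s=j^1\phi$ does not follow, and the cited sources indeed state this implication under a regularity hypothesis.

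One place where your write-up is muddled is the justification of $4\Rightarrow 3$. The variational computation you invoke only tests against $\pi_1$-vertical fields, so it delivers statement $2$, not $3$; the extension to arbitrary $X$ has nothing to do with ``$\omega_{\mathcal L}=(\mathbb F\mathcal L)^*\omega$ and the fibers over $E$ being contracted''. The clean reason is purely linear-algebraic: since $s$ is a section of $\pi_1$, along $\mathrm{im}(s)$ one has $T(J^1\pi)=s_*(T\Sigma)\oplus V(\pi_1)$, and for any $X$ tangent to $\mathrm{im}(s)$ the form $s^*(\iota_X\omega_{\mathcal L})$ is a contraction of the $(n{+}1)$-form $s^*\omega_{\mathcal L}$ on the $n$-manifold $\Sigma$, hence vanishes automatically. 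This is what gives $2\Rightarrow 3$ (and hence $4\Rightarrow 3$ once you have $4\Rightarrow 2$). A second minor point: the equality $\mathcal L\circ s=s^*\theta_{\mathcal L}$ asserted in the statement of item 1 holds only for holonomic $s$; for general $s$ the two differ by $\sum \frac{\partial L}{\partial v^a_\mu}(\partial_\mu\phi^a - s^a_\mu)\,d^nx$, so the functional one actually varies is $\int_\Sigma s^*\theta_{\mathcal L}$, and the other expression becomes valid only after holonomy is established.
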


\begin{proof}
	See, e.g., \cite{MR0341531} or the survey \cite{MR2559661}.
\end{proof}

\begin{corollary}
	Let $\mathcal L$ be a hyper-regular Lagrangian density on $J^1\pi$, s a section of $\pi_1:J^1\pi\to \Sigma$ and $\tilde \Psi$ a section of $\tau:P(\pi)\to \Sigma$ such that $\tilde \Psi=(\mu \circ \mathbb F\mathcal L)\circ s$ (or equvalently $s=(\mu\circ \mathbb F\mathcal L)^{-1}\circ \tilde \Psi$). Then $s$ is critical for $\mathbb L[s]=\int_{\Sigma} s^*\theta_\mathcal L$ if and only if $\tilde \Psi$ is critical for the ``Hamilton functional'' $\mathbb H[\tilde \Psi]=\int _ \Sigma \tilde \Psi^ *\theta_h$, where $\theta_h=h^ *\theta$.
\end{corollary}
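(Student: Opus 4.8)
The plan is to reduce the statement to a change-of-variables along the diffeomorphism furnished by hyper-regularity. Set $F:=\mu\circ\mathbb F\mathcal L\colon J^1\pi\to P(\pi)$; by hyper-regularity $F$ is a diffeomorphism, and since both $\mathbb F\mathcal L$ and $\mu$ are maps over $E$, $F$ covers $\mathrm{id}_E$ and hence $\tau\circ F=\pi\circ\kappa\circ F=\pi\circ\pi_{1,0}=\pi_1$. Therefore $\psi\mapsto F\circ\psi$ is a bijection from sections of $\pi_1$ onto sections of $\tau$, which by hypothesis carries $s$ to $\tilde\Psi$. The first step is to show that under this bijection the two functionals literally agree. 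Since $h=(\mathbb F\mathcal L)\circ F^{-1}$ is a genuinely global object (this is exactly what hyper-regularity buys us over mere regularity), and $\theta_{\mathcal L}=(\mathbb F\mathcal L)^*\theta$, we get
\[
\theta_h=h^*\theta=(F^{-1})^*(\mathbb F\mathcal L)^*\theta=(F^{-1})^*\theta_{\mathcal L},
\]
so for any section $\psi$ of $\pi_1$ one has $(F\circ\psi)^*\theta_h=\psi^*F^*(F^{-1})^*\theta_{\mathcal L}=\psi^*\theta_{\mathcal L}$ and consequently $\mathbb H[F\circ\psi]=\int_\Sigma(F\circ\psi)^*\theta_h=\int_\Sigma\psi^*\theta_{\mathcal L}=\mathbb L[\psi]$; in particular $\mathbb H[\tilde\Psi]=\mathbb L[s]$.

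It then remains to see that $F$ transports variations to variations. The point is that, $F$ being a diffeomorphism over $\Sigma$, pushforward along $F$ is a bijection between compactly supported $\pi_1$-vertical vector fields on $J^1\pi$ and compactly supported $\tau$-vertical vector fields on $P(\pi)$, and it intertwines the associated flows: if $X$ is $\pi_1$-vertical with flow $(\sigma_\epsilon^X)$, then $F_*X$ is $\tau$-vertical with flow $(F\circ\sigma_\epsilon^X\circ F^{-1})$, and $F\circ(\sigma_\epsilon^X\circ s)=(F\circ\sigma_\epsilon^X\circ F^{-1})\circ\tilde\Psi$. Combining this with the identity $\mathbb H[F\circ\psi]=\mathbb L[\psi]$ from the first step gives
\[
\frac{d}{d\epsilon}\Big|_0\mathbb H[(F\circ\sigma_\epsilon^X\circ F^{-1})\circ\tilde\Psi]=\frac{d}{d\epsilon}\Big|_0\mathbb L[\sigma_\epsilon^X\circ s],
\]
so the left-hand side vanishes for all compactly supported $\tau$-vertical vector fields precisely when the right-hand side vanishes for all compactly supported $\pi_1$-vertical vector fields; that is, $\tilde\Psi$ is critical for $\mathbb H$ if and only if $s$ is critical for $\mathbb L$.

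The only genuinely non-formal ingredient — and the place where I would be careful — is the verification that $F=\mu\circ\mathbb F\mathcal L$ is indeed a bundle diffeomorphism over $\Sigma$ and hence induces the claimed bijections on sections and on compactly supported vertical vector fields with matching flows; granting this, the corollary is a tautology via the above change of variables. One may finally recall, to close the circle with the preceding Proposition and the Euler--Lagrange picture, that criticality of $s$ for $\mathbb L$ is itself equivalent to $s=j^1\phi$ for a critical section $\phi$ of $\pi$, so that the Hamiltonian variational problem for $\mathbb H$ on sections of $\tau\colon P(\pi)\to\Sigma$ is identified with the original Lagrangian field theory.
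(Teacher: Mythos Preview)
Your proof is correct and follows essentially the same route as the paper: both reduce to the identity $\tilde\Psi^*\theta_h=s^*\theta_{\mathcal L}$ obtained from $h=(\mathbb F\mathcal L)\circ F^{-1}$ and $\theta_{\mathcal L}=(\mathbb F\mathcal L)^*\theta$. The paper stops after this one-line computation, leaving the passage from ``equal integrands under the bijection of sections'' to ``equivalent critical points'' implicit; your second step, transporting compactly supported vertical vector fields and their flows along the bundle diffeomorphism $F$ over $\Sigma$, makes this passage explicit and is a welcome clarification rather than a different argument.
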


\begin{proof}
Recall first the relevant diagram:
			\[
			\xymatrix{
				M(\pi)\ar[d]_\mu&&\\
				P(\pi)\ar[d] \ar@/_2pc/[dd]_{\tau}&& J^1\pi \ar[ll]_{\mu\circ \mathbb F\mathcal L} \ar[ull]_{\mathbb F\mathcal L} \ar[ddll]  \\
				E\ar[d]_\pi&&\\
				\Sigma&&
			}		
			\]		
One has $\tilde \Psi^ *\theta_h=s^ *\circ (\mu\circ \mathbb F\mathcal L)^*\circ ((\mu\circ \mathbb F\mathcal L)^{-1})^*\circ (\mathbb F\mathcal L)^*\theta=s^*\circ (\mathbb F\mathcal L)^ *\theta=s^*\theta_\mathcal L$.
\end{proof}

\begin{definition}
	Let $h$ be a smooth map from $P(\pi)$ to $M(\pi)$ such that $\mu\circ h=\mathrm{id}_{P(\pi)}$. Then $h$ is called a ``Hamiltonian section (of $\mu$)'' and we denote $\mathrm{im}(h)\subset M(\pi)$ as $W$. The image $\Psi(U)$ of a (local) section $\Psi:U\to M(\pi)$ of $\tau\circ \mu:M(\pi)\to \Sigma$, defined on an open subset $U\subset \Sigma$, is called a ``(local) vortex $n$-plane (for $h$)'' if 
	\begin{enumerate}
		\item $\Psi(U)\subset W$, i.e. with $\tilde \Psi:=\mu\circ \Psi$ one has $\Psi=h\circ \tilde \Psi$, and
		\item $\forall x\in U$, $\forall \gamma_x\in \Lambda^nT_x\Sigma$ one has $\iota_{(\Psi_*)_x(\gamma_x)}\omega_{\Psi(x)}=0$ as a functional on $T_{\Psi(x)}W$.
	\end{enumerate}
\end{definition}
\begin{remark}
	Let $(x^\mu,q^a,p^\mu_a,p)$ resp. $(x^\mu,q^a,p^\mu_a)$ be standard coordinates on $M(\pi)$ resp. $P(\pi)$. Then $\mu(x^\mu,q^a,p^\mu_a,p)=(x^\mu,q^a,p^\mu_a)$ and $h(x^\mu,q^a,p^\mu_a)=(x^\mu,q^a,p^\mu_a,\mathcal H(x^\mu,q^a,p^\mu_a))$, i.e. locally 
	\[
	W=\mathrm{im}(h)=\{(x^\mu,q^a,p^\mu_a,p)~|~ \mathcal(x^\mu,q^a,p^\mu_a)=p\}
	\]
	and, putting $H(x^\mu,q^a,p^\mu_a,p)=\mathcal H(x^\mu,q^a,p^\mu_a)-p$, $W=\{H=0\}\subset M(\pi)$. Thus for a (local) vortex $n$-plane, we have $\forall x\in U \subset \Sigma$, $\iota_{(\Psi_*)_x(\gamma_x)}\omega_{\Psi(x)}$ is proportional to $(dH)_{\Psi(x)}$ for all $\gamma_x\in \Lambda^nT_x\Sigma$. We obtain then that $\omega_h=h^*\omega=-d\mathcal H\wedge d^nx-\sum_{a,\mu}dp_a^\mu\wedge dq^a\wedge \widehat{d^nx^\mu}$ in these coordinates.
	\end{remark}

\begin{theorem}[Equivalent formulations of Hamiltonian field theories, I]
	Let $\pi:E\overset{Q}{\to}\Sigma$ be a fiber bundle with $n$-dimensional base and $N$-dimensional fiber and $h:P(\pi)\to M(\pi)$ be a Hamiltonian section. Then for a (local) section $\tilde\Psi$ of $\tau:P(\pi)\to \Sigma$ defined on $U$, open in $\Sigma$, the following are equivalent:
	\begin{enumerate}
		\item $\tilde \Psi$ is critical for $\mathbb H[\tilde \Psi]=\int_U \tilde \Psi^*\theta_h$.
		\item $\tilde \Psi^ *(\iota_X\omega_h)=0$ for all $\tau$-vertical $X$ in $\mathfrak X(P(\pi))$.
		\item $\tilde \Psi^ *(\iota_X\omega_h)=0$ for all $X$ in $\mathfrak X(P(\pi))$.
		\item in local coordinates as in the preceding remark, $\tilde \Psi$ fulfills the following ``Hamilton-Volterra'' equations: $\forall \mu \in \{1,...,n\}$ and $\forall a\in \{1,...,N\}$
		\begin{align*}
			\frac{\partial \mathcal H}{\partial q^a}(\tilde \Psi(x))=\sum_{\mu=1}^{n}\frac{\partial(p^\mu_a\circ \tilde \Psi)(x)}{\partial x^\mu},\\
			-\frac{\partial \mathcal H}{\partial p_a^\mu}(\tilde \Psi(x))=\frac{\partial(q^a\circ \tilde \Psi)(x)}{\partial x^\mu}.
		\end{align*}
	 \item 	The image of $\Psi:=h\circ \tilde \Psi:U\to M(\pi)$ is a (local) vortex $n$-plane.
	\end{enumerate}
\end{theorem}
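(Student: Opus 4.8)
The plan is to establish the circle of implications $(2)\Rightarrow(1)$, $(1)\Rightarrow(2)$, $(2)\Leftrightarrow(3)$, $(2)\Leftrightarrow(4)$ and $(3)\Leftrightarrow(5)$, following closely the proof of the analogous theorem in time-dependent mechanics. Throughout I would use that $\omega_h=-d\theta_h$ and that $h$ is a diffeomorphism from $P(\pi)$ onto the hypersurface $W=\{H=0\}\subset M(\pi)$, where $H=\mathcal H-p$ and $dH$ vanishes nowhere (since $\partial H/\partial p=-1$); in particular $T_{\Psi(x)}W=h_*\bigl(T_{\tilde\Psi(x)}P(\pi)\bigr)$ is a hyperplane.

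For $(1)\Leftrightarrow(2)$ I would run the standard first-variation computation. Given a $\tau$-vertical vector field $X$ on $P(\pi)$ with compact support and flow $(\sigma^X_\epsilon)_\epsilon$, each $\sigma^X_\epsilon\circ\tilde\Psi$ is again a local section of $\tau$ over $U$ (the flow of a $\tau$-vertical field preserves the fibers of $\tau$), and Cartan's formula together with $\omega_h=-d\theta_h$ gives
\begin{align*}
\frac{d}{d\epsilon}\bigg|_0\mathbb H[\sigma^X_\epsilon\circ\tilde\Psi]
&=\int_U\tilde\Psi^*(\mathcal L_X\theta_h)
=\int_U\tilde\Psi^*\bigl(d\,\iota_X\theta_h\bigr)-\int_U\tilde\Psi^*(\iota_X\omega_h)\\
&=-\int_U\tilde\Psi^*(\iota_X\omega_h),
\end{align*}
the exact term vanishing by Stokes since we may assume $X$ vanishes near the boundary of $\tilde\Psi(U)$. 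Hence $\tilde\Psi$ is critical iff $\int_U\tilde\Psi^*(\iota_X\omega_h)=0$ for all such $X$; since $\iota_X\omega_h$ at a point depends only on the value of $X$ there and $\tilde\Psi^*(\iota_X\omega_h)$ is a top-degree form on $U$, a bump-function argument (the fundamental lemma of the calculus of variations) upgrades this to the pointwise statement $(2)$.

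The implication $(3)\Rightarrow(2)$ is trivial; for $(2)\Rightarrow(3)$, note that since $\tilde\Psi$ is a section of $\tau$ it is an immersion transverse to the fibers, so along $\tilde\Psi(U)$ every tangent vector of $P(\pi)$ splits uniquely into a $\tau$-vertical vector and a vector of the form $\tilde\Psi_*Z$ with $Z\in\mathfrak X(U)$. Since $\tilde\Psi^*(\iota_{\tilde\Psi_*Z}\omega_h)=\iota_Z(\tilde\Psi^*\omega_h)=0$ --- $\tilde\Psi^*\omega_h$ being an $(n{+}1)$-form on the $n$-manifold $U$ --- only the vertical part of $X$ contributes, and that part is killed by $(2)$. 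For $(2)\Leftrightarrow(4)$ I would test $(2)$ against the coordinate vertical fields $\partial/\partial q^a$ and $\partial/\partial p^\mu_a$, which span the $\tau$-vertical fields over $C^\infty(P(\pi))$, using the expression $\omega_h=-d\mathcal H\wedge d^nx-\sum_{a,\mu}dp^\mu_a\wedge dq^a\wedge\widehat{d^nx^\mu}$ and the identity $dx^\nu\wedge\widehat{d^nx^\mu}=\delta^\nu_\mu\,d^nx$; contracting, pulling back by $\tilde\Psi$ and reading off the coefficient of $d^nx$ produces exactly the two Hamilton--Volterra equations.

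The remaining equivalence $(3)\Leftrightarrow(5)$ is the conceptual core and the step I expect to cost the most care. The idea is to transport the condition from $P(\pi)$ to $W\subset M(\pi)$: for $Y\in\mathfrak X(P(\pi))$ one has
\[
\tilde\Psi^*(\iota_Y\omega_h)=\tilde\Psi^*(\iota_Y h^*\omega)=\Psi^*(\iota_{h_*Y}\omega),
\]
and evaluating both sides on a generator $\gamma_x=\partial_{x^1}\wedge\dots\wedge\partial_{x^n}$ of the one-dimensional space $\Lambda^nT_x\Sigma$ gives, up to the sign $(-1)^n$, the number
\[
\omega_{\Psi(x)}\bigl((\Psi_*)_x\partial_{x^1},\dots,(\Psi_*)_x\partial_{x^n},(h_*Y)(\tilde\Psi(x))\bigr).
\]
As $Y$ varies the vectors $(h_*Y)(\tilde\Psi(x))$ exhaust $T_{\Psi(x)}W$, so the vanishing of $\tilde\Psi^*(\iota_Y\omega_h)$ for all $Y$ is equivalent to $\iota_{(\Psi_*)_x(\gamma_x)}\omega_{\Psi(x)}$ vanishing on $T_{\Psi(x)}W$, and this holds for the generator $\gamma_x$ iff it holds for every $\gamma_x\in\Lambda^nT_x\Sigma$. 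Combined with $\mathrm{im}(\Psi)\subset W$, which is automatic from $\Psi=h\circ\tilde\Psi$ (and which also shows that $\Psi$ is a section of $\tau\circ\mu$), this is precisely the definition of $\mathrm{im}(\Psi)$ being a vortex $n$-plane. The main obstacle is the bookkeeping: keeping the two ambient forms $\omega$ on $M(\pi)$ and $\omega_h$ on $P(\pi)$ apart, tracking the sign $(-1)^n$ from moving $h_*Y$ past the $(\Psi_*)_x\partial_{x^\mu}$, and justifying that it suffices to contract with the single decomposable pushed-forward $n$-vector $(\Psi_*)_x(\gamma_x)$ because $\Lambda^nT_x\Sigma$ is one-dimensional.
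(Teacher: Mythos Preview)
Your proof is correct and follows essentially the same route as the paper. The paper actually defers the equivalence of the first four conditions to the literature (\cite{MR0341531,MR2559661,MR2370237}) and only writes out $(3)\Leftrightarrow(5)$, for which your argument---transporting via $h$ to $W$ and using that $(h_*Y)(\tilde\Psi(x))$ ranges over $T_{\Psi(x)}W$ while $\Lambda^nT_x\Sigma$ is one-dimensional---is exactly the paper's computation (the paper phrases it on $P(\pi)$ with $\omega_h$ rather than on $W$ with $\omega|_{TW}$, but this is the same after noting $h:(P(\pi),\omega_h)\to(W,\omega|_{TW})$ is a diffeomorphism). Your treatments of $(1)\Leftrightarrow(2)$ and $(2)\Leftrightarrow(3)$ reproduce the standard arguments the paper gave explicitly in the mechanics case and implicitly invokes here via the references.
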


\begin{proof}
	For a proof of the equivalence of the first four conditions we refer again to \cite{MR0341531,MR2559661}, compare also \cite{MR2370237}. Here, we only show that 3. and 5. are equivalent, since the notion of a vortex $n$-plane seems to be less standard.
	Obviously $W$ together with the form $\omega|_{TW}$ is diffeomorphic to $P(\pi)$ with the form $\omega_h=h^*\omega$ via $\mu|_W$ with inverse $h$. For $X\in \mathfrak X(P(\pi))$ and $x\in U$, $\gamma_x\in \Lambda^nT_x\Sigma$ we have 
	\begin{align*}
	(\tilde \Psi ^ *(\iota_X\omega_h))_x(\gamma_x)
	&=(\omega_h)_{\tilde \Psi(x)}(X_{\tilde{\Psi(x)}}, (\tilde \Psi_*)_x(\gamma_x) )\\
	&=(-1)^n ( \iota_{(\tilde \Psi_*)_x(\gamma_x)}(\omega_h)_{\tilde \Psi(x)}  )(X_{\tilde \Psi(x)})
	\end{align*}
	and thus $\tilde \Psi ^*(\iota_X\omega_h)=0$, $\forall X\in \mathfrak X(P(\pi))$ is equivalent to 
	$ ( \iota_{ (\tilde \Psi_*)_x(\gamma_x)}\omega_{\tilde \Psi(x)}  )(v_{\Psi(x)})=0 $, $\forall x\in U\subset\Sigma$, $\forall \gamma_x\in \Lambda^nT_x\Sigma$ and $\forall v_{\Psi(x)}\in T_{\psi(x)}P(\pi)$.
\end{proof}

\begin{remark}~\label{voltref}
	\begin{enumerate}
		\item The Hamilton-Volterra equations go back, at least, to work of Volterra in the end of the 19th century (compare \cite{volt1,volt2}).
		\item When a section $h$ of the affine $\mathbb{R}$-bundle $\mu:M(\pi)\to P(\pi)$ is given, we get a linear structure on $\mu$ by the following isomorphism of affine $\mathbb R$-bundles
		\[
		\xymatrix{
		\tau^*(\Lambda^n T^*\Sigma)\ar[rr] \ar [dr]&&M(\pi)\ar [dl]_\mu\\
		&P(\pi)&			
			},
		\] 
		$\Omega_z\mapsto h(z)+\Omega_z$, $\forall z\in P(\pi)$, $\forall \Omega_z\in \tau^ *(\Lambda^nT^ *\Sigma)_z=\Lambda^nT^*_{\tau(z)}\Sigma$. If, furthermore, a volume form $\mathrm{vol}^\Sigma$ is given on $\Sigma$, the induced section $\tau^ *(\mathrm{vol}^\Sigma)$ yields a trivialization 
\[
	\xymatrix{
			P(\pi)\times \mathbb R \ar[rr] \ar [rd]&&\tau^*(\Lambda^n T^*\Sigma)\ar [dl]\\
			&P(\pi)&			
			},
\] 
$(z,u)\mapsto u\cdot (\tau^ *\mathrm{vol}^\Sigma)_z=u\cdot \mathrm{vol}^\Sigma_{\tau(z)}$. 	
Combining the two maps, we can trivialize $\mu$ by:
\[
\xymatrix{
	P(\pi)\times \mathbb R \ar[rr]^\chi \ar [rd]_{\mathrm{proj}_{P(\pi)}}&&M(\pi)\ar [dl]^\mu\\
		&P(\pi)&			
	},
	\]		
	$\chi(z,u)=h(z)+u\cdot (\tau ^ *\mathrm{vol}^\Sigma)_z$. The ``Hamiltonian section'' then translates to a ``Hamiltonian function'' $H:=-\mathrm{proj}_{\mathbb R}\circ \chi{-1}:M(\pi)\to \mathbb R$. In local standard coordinates such that $\mathrm{vol}^\Sigma=d^nx$, we get: $\chi^{-1}(x^\mu,q^a,p^\mu_a,p)=(x^\mu,q^a,p^\mu_a,p-\mathcal H(x^\mu,q^a,p^\mu_a))$ and thus $H(x^\mu,q^a,p^\mu_a,p)=\mathcal H(x^\mu,q^a,p^\mu_a)-p$.
	\end{enumerate}
\end{remark}

\begin{theorem}[Equivalent formulations of Hamiltonian field theories, II]\label{fin1thm}
	Let $\pi:E\overset{Q}{\to}\Sigma$ be a fiber bundle with $n$-dimensional base and $h:P(\pi)\to M(\pi)$ be a Hamiltonian section. Assume furthermore that a volume form $\mathrm{vol}^\Sigma$ is given on $\Sigma$ and let $\gamma\in\mathfrak X^n(\Sigma)$ be the unique $n$-vector field satisfying $\iota_\gamma\mathrm{vol}^\Sigma=1$. Then for a (local) section $\tilde \Psi$ of $\tau: P(\pi)\to \Sigma$ defined on $U$, open in $\Sigma$, the following are equivalent:
	\begin{enumerate}[(i)]
		\item $\tilde \Psi$ is a critical section for the functional $\mathbb H[\tilde \Psi]=\int _ U\tilde \Psi^ *\theta_h$, where $\theta_h=h^* \theta$ and $\theta=\theta^{M(\pi)}$,
		\item $\iota_{(\Psi_*)_x(\gamma_x)}\omega_{\Psi(x)}=(-1)^n(dH)_{\Psi(x)}$ $\forall x\in U$, where $\Psi=h\circ \tilde \Psi:U\to M(\pi)$.
	\end{enumerate}
\end{theorem}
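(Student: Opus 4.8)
The plan is to connect condition (i) to the previously established Theorem (``Equivalent formulations of Hamiltonian field theories, I''), whose list of equivalent conditions already includes criticality of $\tilde\Psi$ for $\mathbb H$ (condition 1 there) and the assertion that $\operatorname{im}(\Psi)$ is a vortex $n$-plane (condition 5 there). So the real content is to show that, once a volume form $\mathrm{vol}^\Sigma$ and the dual $n$-vector field $\gamma$ with $\iota_\gamma\mathrm{vol}^\Sigma=1$ are fixed, the vortex $n$-plane condition on $\operatorname{im}(\Psi)$ is equivalent to the pointwise equation $\iota_{(\Psi_*)_x(\gamma_x)}\omega_{\Psi(x)}=(-1)^n(dH)_{\Psi(x)}$. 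First I would recall from the remark preceding the previous theorem that in standard coordinates with $\mathrm{vol}^\Sigma=d^nx$ one has $W=\{H=0\}$ with $H=\mathcal H-p$, and that $\omega=-dp\wedge d^nx-\sum_{a,\mu}dp^\mu_a\wedge dq^a\wedge\widehat{d^nx^\mu}$. The vortex $n$-plane condition says: $\Psi(U)\subset W$ and, for all $x$, $\iota_{(\Psi_*)_x(\gamma_x)}\omega_{\Psi(x)}$ vanishes as a functional on $T_{\Psi(x)}W=\ker(dH)_{\Psi(x)}$.

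Next I would run the standard ``kernel of $dH$'' argument. Since $\omega$ is non-degenerate on $M(\pi)$ (Proposition/Definition on tautological forms), and $\Psi_*(\gamma)$ is a decomposable $n$-vector (being the pushforward of a decomposable $n$-vector on $\Sigma$), the $1$-form $\iota_{\Psi_*(\gamma)}\omega$ is nonzero wherever $\Psi_*(\gamma)$ is nonzero, which holds because $\tilde\Psi=\mu\circ\Psi$ is a section of $\tau$ and hence $\tau_*\Psi_*(\gamma)=\gamma\neq 0$. A $1$-form that annihilates the hyperplane $\ker(dH)$ must be a scalar multiple of $dH$; so the vortex condition is equivalent to $\iota_{\Psi_*(\gamma)}\omega = \lambda\cdot dH$ for some function $\lambda$ on $U$ (together with $\Psi(U)\subset W$). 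It then remains to pin down $\lambda=(-1)^n$. For this I would evaluate both sides on the coordinate vector field $\partial/\partial p$ (the fiber coordinate of $T^*I$ in the mechanics picture, i.e. the coordinate $p$ dual to the energy): on the one hand $(dH)(\partial/\partial p)=-1$; on the other hand, contracting $\Psi_*(\gamma)$ — whose image under $\tau\circ\mu$ is $\gamma$, so that in coordinates $\Psi_*(\gamma)$ has the form $\partial/\partial x^1\wedge\cdots\wedge\partial/\partial x^n$ plus terms involving $\partial/\partial q^a$ and $\partial/\partial p^\mu_a$ — into $\omega=-dp\wedge d^nx-\cdots$ and then into $\partial/\partial p$ picks out exactly the term $-dp\wedge d^nx$, giving $(-1)^n$ after accounting for the sign of moving $\iota_{\partial/\partial p}$ past $dp$ and the orientation of $d^nx$. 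Matching, $\lambda\cdot(-1)=(-1)^{n}\cdot(-1)$, whence $\lambda=(-1)^n$. (The precise sign bookkeeping is the one place requiring care; it is a finite computation with the explicit coordinate expression for $\omega$ and $\omega_h$ already recorded in the excerpt.)

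I expect the main obstacle to be purely this sign/normalization step — verifying that the constant is exactly $(-1)^n$ rather than $\pm 1$ with the opposite sign — and, relatedly, checking that the condition $\Psi(U)\subset W$ is not lost: the pointwise equation $\iota_{\Psi_*(\gamma)}\omega=(-1)^n dH$ does not obviously force $\Psi$ to land in $W$, so one must argue that criticality of $\tilde\Psi$ (equivalently, that $\Psi=h\circ\tilde\Psi$) already guarantees $H\circ\Psi=0$ by construction of $\Psi$ from the Hamiltonian section $h$. Concretely, since we define $\Psi:=h\circ\tilde\Psi$ in condition (ii) exactly as in the previous theorem's condition 5, the inclusion $\Psi(U)\subset W=\operatorname{im}(h)$ is automatic, and the equivalence reduces cleanly to the linear-algebra statement above. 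Assembling: (i) $\Leftrightarrow$ ``$\operatorname{im}(\Psi)$ is a vortex $n$-plane'' by the previous theorem, and the latter $\Leftrightarrow$ (ii) by the $\ker(dH)$ argument plus the sign computation, completing the proof.
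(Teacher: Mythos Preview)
Your proposal is correct and follows essentially the same route as the paper's proof: reduce (i) to the vortex $n$-plane condition via the previous theorem, observe that vanishing on $\ker(dH)$ forces $\iota_{\Psi_*(\gamma)}\omega$ to be a scalar multiple of $dH$, and pin down the scalar as $(-1)^n$ by evaluating on $\partial/\partial p$. Your additional remarks about non-degeneracy and about $\Psi(U)\subset W$ being automatic (since $\Psi=h\circ\tilde\Psi$ by definition) are not needed for the argument but are harmless clarifications.
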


\begin{proof}
	Let us stress, that we invert the order, when we define contractions between multivector fields and differential forms, i.e. if $V$ is a vector space of dimension at least two and $\eta\in \Lambda^kV^*$ with $2\leq k\leq dim_{\mathbb R}V$ and $u,v\in V$, then $\iota_{u\wedge v}\eta=\iota_v\iota_u\eta$.\\
	
	Recall from the first theorem on equivalent formulations of Hamiltonian field theories that the first condition is equivalent to $\Psi$ being a vortex $n$-plane, i.e. $\iota_{(\Psi_*)_x(\gamma_x)}\omega_{\Psi(x)}=0$ as functionals on $T_{\Psi(x)}W=\mathrm{ker}(dH)_{\Psi(x)}\subset T_{\Psi(x)}M(\pi)$ $\forall x\in U$. (Here $H$ is the Hamiltonian function on $M(\pi)$, associated to $h$ and $\mathrm{vol}^\Sigma$.) 
	Thus $\iota_{(\Psi_*)_x(\gamma_x)}\omega_{\Psi(x)}=g(x)(dH)_{\Psi(x)}$ as functionals on $T_{\Psi(x)}M(\pi)$, for all $x\in U$, where $g:U\to \mathbb R$ is a smooth function. Since in local coordinates $\mathrm{vol}^\Sigma=d^nx=dx^1\wedge ... \wedge dx^n$, $\gamma=\frac{\partial}{\partial x^1}\wedge ... \wedge \frac{\partial}{\partial x^n}$,
	$\omega=-dp\wedge d^nx-\sum_{ a,\mu} dp^\mu_a\wedge dq^a\wedge \widehat{d^nx^\mu}$ and $H=\mathcal H-p$ an immediate calculation shows that $\iota_{\frac{\partial}{\partial p}|_{\Psi(x)}}g(x)(dH)_{\Psi(x)}=-g(x)$ and $\iota_{\frac{\partial}{\partial p}|_{\Psi(x)}}\iota_{(\Psi_*)_x(\gamma_x)}\omega_{\Psi(x)}=-(-1)^n \iota_{\frac{\partial}{\partial p}|_{\Psi(x)}}(dp)_{\Psi(x)}=-(-1) ^ n$, i.e. $g(x)=(-1)^n$, proving that the first assertion implies the second. On the other hand, the second condition immediately implies that $\Psi$ is a vortex $n$-plane for $h$, since for $w\in W=\{H=0\}$, $T_wW=\mathrm{ker}(dH)_w$.
\end{proof}

\begin{remark}\label{rem:lag}
	Assume that the Lagrangian $\mathcal L:J^1\pi\to \pi^*\Lambda^nT^ *\Sigma$ is (hyper-)regular with induced Hamiltonian section $h:P(\pi)\to M(\pi)$. If, furthermore, a volume form $\mathrm{vol}^\Sigma$ on $\Sigma$ is fixed, the problem of finding a section $\phi$ of $\pi:E\to \Sigma$ fulfilling the Euler-Lagrange equations is, by the preceding results, equivalent to finding a section $\tilde \Psi$ of $\tau:P(\pi)\to \Sigma$ such that $\forall x$
	\[
	\iota_{(\Psi_*)_x(\gamma_x)}\omega_{\Psi(x)}=(-1)^n (dH)_{\Psi(x)},
	\]
where $\Psi=h\circ \tilde \Psi$, $H$ is the Hamiltonian function associated to $h$ and $\mathrm{vol}^\Sigma$, and $\gamma\in \mathfrak X^n(\Sigma)=\Gamma(\Sigma,\Lambda^nT\Sigma)$ is uniquely determined by $\iota_\gamma(\mathrm{vol}^\Sigma)=1$. The Lagrangian variational problem is thus equivalent to the following ``Hamiltonian'' two-step problem:
\begin{itemize}
	\item Find $X_H\in \mathfrak X^n(M(\pi))$ such that $\iota_{X_H}\omega=(-1)^ndH$
	\item Find a section $\Psi$ of $M(\pi)\to \Sigma$ such that $(\Psi_*)_x(\gamma_x)=X_H(\Psi(x))$ for all $x$.
\end{itemize} 
(Note that $\Psi$ then automatically has values in $\{H=C\}\subset M(\pi)$ for an appropriate $C\in \mathbb R$, i.e. $\Psi$ factorizes through $h+C\cdot \tau^ *(\mathrm{vol}^\Sigma)$.) The equation $\iota_{X_H}\omega=(-1)^ndH$ can easily be generalized to the following ``{Hamilton-DeDonder-Weyl (or HDW) equations}''
\[\iota_{X_H}=-dH\]
for a couple $(H,X_H)\in \Omega^k(M(\pi))\times \mathfrak X^{n-k}(M(\pi))$. Typically a ``Hamiltonian $k$-form'' $H$ is given and the ``Hamiltonian $(n{-}k)$-vector field'' $X_H$ is considered to be the unknown. Solutions of the second equation $\Psi_*(\gamma)=X_H\circ \Psi$ for $H$ a 0-form are also called ``Hamiltonian $n$-curves'' cf. \cite{MR2105190}. These ideas generalize to the context of multisymplectic manifolds, which we will introduce in the following section.
\end{remark}

\section{Multisymplectic manifolds}
\label{sec:2}
Multisymplectic manifolds generalize the multiphase spaces crucial to the formulation of Hamiltonian classical field theories in Section \ref{sec:1}. Our definition is rather general but seems to be the most natural one, and is widely used by now in the mathematical literature. More restricted definitions amount to impose the existence of a global potential of the multisymplectic form and/or the existence of standard coordinate systems. The issue of normal forms will be treated by us in Section \ref{sec:3}. The main body of this section consists of examples showing that interesting classes of multisymplectic manifolds abound. We end this section by describing the Hamilton-DeDonder-Weyl equations on a general multisymplectic manifold.
 
\begin{definition}
	A ``multisymplectic'' manifold $(M,\omega)$ is a pair, where $M$ is a manifold, $k\geq 1$ and 
	$\omega\in \Omega^{k+1}_{cl}(M)$ is a closed differential form satisfying the following non-degeneracy condition: The map
	
\[\iota_\bullet\omega: TM\to \Lambda^{k}T^*M,~~v\mapsto \iota_v\omega\]
is injective. For fixed degree $k+1$ of the form such manifolds are also called ``$k$-plectic''. 
Such a form is sometimes simply called a ``multisymplectic form'' or a ``multisymplectic structure''.
\end{definition}

\begin{example}[The classical cases]$ $
	\begin{itemize}
		\item A symplectic manifold is, by definition, a 1-plectic manifold.
		\item An $n$-dimensional manifold equipped with a volume form is an $(n{-}1)$-plectic manifold.
	\end{itemize}
\end{example}

\begin{example}[Sums and products]
	As in the symplectic case, given two $k$-plectic manifolds $(M,\omega)$ and  $(\tilde M,\tilde \omega)$, there is a natural $k$-plectic structure $\pi_M^*\omega + \pi_{\tilde M}^*\tilde\omega$ on $M\times \tilde M$. Additionally $M\times \tilde M$ carries the multisymplectic structure given by $\pi_M^*\omega \wedge \pi_{\tilde M}^*\tilde\omega$, which is a multisymplectic manifold, even when $\omega$ and $\tilde \omega$ have different degrees.
\end{example}

\begin{example}[Multicotangent bundles and their subbundles]\label{exmulticotangent}
	Given a manifold $Y$ or a fibration $Y\overset{\pi}{\to} \Sigma$ the manifold $\Lambda^nT^ *Y$ resp. the manifolds $\Lambda^n_kT^*Y$ are multisymplectic. More generally we can consider an $(N{-}n)$-dimensional integrable distribution $V\subset TY$ instead of a fibration. Then $V$ would play the role of $ker(\pi_*)\subset TY$.
\end{example}

\begin{example}[Complex manifolds with holomorphic volumes] Let $(M,J)$ be a complex  manifold of dimension $m$, interpreted as a 2m-dimensional real manifold with an integrable almost-complex structure $J$. Let $\omega=\omega^{\mathbb R}+i\omega^{\mathbb I}\in \Omega^{m,0}(M)\subset \Omega^m(M)\otimes_{\mathbb R}\mathbb C$ be a holomorphic volume form, i.e. $\omega$ is a $\mathbb C$-valued smooth $m$-form, such that $\iota_{J(v)\omega}=i\iota_v\omega$ and $\partial \omega=\bar{\partial}\omega=0$. Then $\omega^{\mathbb R}$ and $\omega^{\mathbb I}$ are multisymplectic structures on $M$.
\end{example}

\begin{example}[Semisimple Lie groups]\label{exlie}
	Let $G$ be a real semi-simple Lie group. We construct a 2-plectic form on $G$ using the following facts:\label{lie}
	\begin{itemize}
		\item The Lie bracket is $Ad_g$-equivariant for all $g\in G$. As $G$ is semi-simple, we have $[\mathfrak g,\mathfrak g]=\mathfrak g$.
		\item The (symmetric) Killing-form $\langle\cdot,\cdot\rangle:\mathfrak g\times \mathfrak g\to \mathbb R$ is $Ad_g$-invariant for all $g\in G$ and $ad_X$ is a skew-adjoint linear map for all $X\in \mathfrak g$. It is non-degenerate for semi-simple Lie groups.
		\item The Maurer-Cartan 1-form $\theta^L\in \Omega^1(G,\mathfrak g)$ defined by $\theta^L_g=T_g( L_{g^{-1}}):T_gG\to T_eG=\mathfrak g$, where $L_g:G\to G$ is the left multiplication by $g$, is by construction left-invariant.
	\end{itemize}
	We define $\omega\in \Omega^3(G)$ by $\omega(u,v,w)=\langle\theta^L_g(u),[\theta_g^L(v),\theta^L_g(w)]\rangle$ $\forall g\in G, \forall u,v,w\in T_gG$. Non-degeneracy follows from $[\mathfrak g,\mathfrak g]=\mathfrak g$ and the non-degeneracy of the Killing form. The left-invariance of $\theta^L$ implies that $\omega$, too, is left-invariant. Using the description $Ad_g=T(L_g)\circ T(R_{g^{-1}})$, the $Ad_g$-invariance of the Killing form and the Ad-equivariance of the Lie bracket one can also show that $\omega$ is right-invariant. Any bi-invariant form on a Lie group is automatically closed, so $\omega$ is in $ \Omega_{cl}^3(G)$ and non-degenerate and thus defines a 2-plectic structure on $G$.
\end{example}

\begin{example}[$G_2$-structures]
	A closed $G_2$-structure for a seven-dimensional manifold $M$ is a closed differential 3-form $\omega$,such that for all $p\in M$, there exists a basis $e^1,...,e^7$ of $T^*_pM$ such that
	\[\omega_p=  e^{123}+e^{145}-e^{167}+e^{246}+e^{257}+e^{347}-e^{356},\] 
	where $e^{ijk}$ denotes $e^i\wedge e^j\wedge e^k$. Especially, for a closed $G_2$-structure $\omega$, the pair $(M,\omega)$ is a 2-plectic manifold. 
\end{example}

\begin{example}[Exact 2-plectic structure on $S^6$]\label{ex2pl00}
	We regard the standard closed $G_2$-structure on $\mathbb R^7$, given by 
	$$\omega=dx^{123}+dx^{145}-dx^{167}+dx^{246}+dx^{257}+dx^{347}-dx^{356}$$
	and pull it back to $S^6$ by the canonical inclusion $\rho:S^6\to \mathbb R^7$. This form $\rho^ *\omega$ is still closed, so for 2-plecticity we only need to verify its non-degeneracy. 
	
	Since the linear action of $G_2$ on $\mathbb R^7$ preserves $\omega$ and restricts to a transitive action on $S^6$ (in fact $Aut_{Lin}(\mathbb R^7,\omega)=G_2$ cf. eg. \cite{MR2253159}), it suffices to show non-degeneracy at one point. We regard the point $p=(0,0,0,0,0,0,1)\in S^6\subset \mathbb R^7$ and see 
	$$(\rho^*\omega)_p=\left(dx^{123}+dx^{145}+dx^{246}-dx^{356}\right)\big|_{T_pS^6}.$$
	This form is non-degenerate, as one can see, e.g., by applying Theorem \ref{thm:2pl6} from the next section or by direct verification. It follows that $(S^6,\rho^ *\omega)$ is a 2-plectic manifold, with a homogenous 2-plectic structure. As $H^3_{dR}(S^6)=0$, $\rho^ *\omega$ is exact.
\end{example}

\begin{remark}
	A more general construction for generating multisymplectic manifolds is described in \cite{MR2880222}. Their method recovers all homogenous strictly nearly K\"ahler 6-manifolds (especially $S^6$) as 2-plectic manifolds.
\end{remark}

\begin{example}[Exact 3-plectic structure on $S^6$]
	Let $R$ be the radial vector field $\sum x^i\frac{\partial}{\partial x^i}$ on $\mathbb R^7$. The differential 2-form $\tau=\rho ^ *(\iota_R\omega)$ with $\omega$ as in Example \ref{ex2pl00} is non-degenerate and $G_2$-invariant. However it is not symplectic, in fact $d\tau=3 (\rho ^*{\omega})$. (As one can see, upon using Section 4.1 of \cite{MR2253159}.) We have $\tau_p=\left(-dx^{16}+dx^{25}+dx^{34}\right)\big|_{T_pS^6}$, especially $\tau_p\wedge (\rho^*\omega)_p=0$. As both $\tau$ and $\rho ^ *\omega$ are $G_2$-invariant it follows
	\[
	d(\tau\wedge \tau)=2 d\tau\wedge\tau=6 (\rho^ *\omega)\wedge \tau=0.
	\]
	Thus $(S^6,\tau\wedge \tau)$ is a 3-plectic manifold, with a $G_2$-homogenous 3-plectic form. As $H^4_{dR}(S^6)=0$, this form is also exact.
\end{example}

As the above examples indicate, multisymplectic structures on closed manifolds do not, in general, give rise to cohomology classes. This is part of a very general phenomenon. In many degrees, multisymplectic structures exist in all cohomology classes (especially in the zero class).

\begin{theorem}[Genericity, Theorem 2.2 of \cite{MR0286119}] For $n\geq 7$ and $3\leq k\leq n-2$ an $n$-dimensional manifold has a $(k{-}1)$-plectic structure in every class in $H_{dR}^k(M)$. For such degrees the non-degenerate forms are $C^1$-open and dense in the (closed) forms.
\end{theorem}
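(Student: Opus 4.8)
The plan is to reduce the statement to a codimension estimate for a ``degenerate locus'' and then to run a Thom-type transversality argument carried out entirely through \emph{exact} perturbations, so that the prescribed de Rham class is preserved throughout.

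First I would analyse non-degeneracy pointwise. Fix an $n$-dimensional real vector space $V$ and let $D\subset\Lambda^k V^*$ be the set of $k$-forms $\eta$ for which there is a nonzero $v\in V$ with $\iota_v\eta=0$. To bound $\operatorname{codim}D$ I would use the incidence variety $\Sigma=\{([v],\eta)\in\mathbb{P}(V)\times\Lambda^k V^*:\iota_v\eta=0\}$: picking a basis with $e_1=v$ shows that $\{\eta:\iota_v\eta=0\}$ is the linear subspace $\Lambda^k\langle e^2,\dots,e^n\rangle$, of dimension $\binom{n-1}{k}$, and that $\iota_v$ has constant rank, so $\Sigma$ is a smooth vector bundle over $\mathbb{P}(V)$ with $\dim\Sigma=(n-1)+\binom{n-1}{k}$; since $D$ is the image of the smooth projection $\Sigma\to\Lambda^k V^*$, one gets
\[
\operatorname{codim}_{\Lambda^k V^*}D \;\ge\; \binom{n}{k}-(n-1)-\binom{n-1}{k} \;=\; \binom{n-1}{k-1}-(n-1).
\]
A direct check shows this quantity exceeds $n$ precisely under the stated hypotheses: on $3\le k\le n-2$ the binomial $\binom{n-1}{k-1}$ is, by unimodality, smallest at the two endpoints, where it equals $\binom{n-1}{2}=\tfrac{(n-1)(n-2)}{2}$, and $\binom{n-1}{2}>2n-1$ is equivalent to $n^2-7n+4>0$, i.e. to $n\ge 7$. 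Performing the same construction fiberwise over $M$ produces a smooth manifold $\widetilde\Sigma$ of dimension $2n-1+\binom{n-1}{k}$ together with a smooth map onto the ``bad locus'' $\mathcal{D}$ inside the total space of $\Lambda^k T^*M$, so that $\operatorname{codim}\mathcal{D}\ge\binom{n-1}{k-1}-(n-1)>n$.

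Next I would do the transversality argument inside a fixed cohomology class. Given a closed representative $\omega_0$, perturb by exact forms $\omega_0\mapsto\omega_0+d\beta$, $\beta\in\Omega^{k-1}(M)$; these are automatically closed and cohomologous to $\omega_0$. The essential flexibility input is that for each $p\in M$ and each $\alpha\in\Lambda^k T_p^*M$ there is a compactly supported $\beta$ with $(d\beta)_p=\alpha$ (one may even prescribe higher jets at $p$): in local coordinates centred at $p$, write $\alpha=\sum_I c_I\,dx^I$ and take $\beta=\chi(x)\sum_I c_I\,x^{i_1}\,dx^{i_2}\wedge\cdots\wedge dx^{i_k}$ with $\chi\equiv 1$ near $0$ — the Poincar\'e-lemma homotopy made explicit. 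It follows that the evaluation map $(\beta,p)\mapsto(\omega_0+d\beta)_p$ into $\Lambda^k T^*M$ is a submersion, hence transverse to the map $\widetilde\Sigma\to\Lambda^k T^*M$; parametric transversality (Thom's theorem, or the Sard-Smale theorem applied to the Banach spaces of $C^r$ forms, or finite-dimensional perturbation families when $M$ is compact) then yields a residual, in particular dense, set of $\beta$ — and these may be taken arbitrarily small in $C^2$, hence $d\beta$ arbitrarily small in $C^1$ — for which the section $x\mapsto(\omega_0+d\beta)_x$ is transverse to $\mathcal{D}$. Since $\operatorname{codim}\mathcal{D}>\dim M$, transversality forces this section to be disjoint from $\mathcal{D}$, i.e. $\omega_0+d\beta$ is non-degenerate. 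This gives $C^1$-density of non-degenerate closed forms within each class (hence a non-degenerate representative in every class, including the zero class), and a fortiori within all closed forms.

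Finally, openness is elementary: non-degeneracy of $\omega$ at $p$ is exactly the requirement that $\iota_\bullet\omega_p:T_pM\to\Lambda^k T_p^*M$ be injective, and injectivity (full rank) is an open condition on linear maps, so any form $C^0$-close — a fortiori $C^1$-close — to a non-degenerate one is still non-degenerate on any compact subset; on non-compact $M$ one passes to the fine Whitney topology, and density is globalised by the standard exhaustion argument, fixing up non-degeneracy on an increasing sequence of compacta by successive small exact perturbations supported near the ``shells'' and small enough that openness protects what has already been achieved. The main obstacle — the point genuinely specific to this theorem rather than to ordinary Thom transversality — is the conflict between the cohomological constraint and pointwise genericity: one is not allowed to perturb $\omega$ by arbitrary sections of $\Lambda^k T^*M$, only by exact forms, so everything rests on the flexibility lemma above, which shows that exact perturbations are nonetheless ``pointwise onto on jets'' and hence rich enough to force transversality without leaving the class; the numerical thresholds $n\ge 7$ and $3\le k\le n-2$ are then exactly what the codimension inequality demands.
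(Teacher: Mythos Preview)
The paper does not supply its own proof of this theorem; it is stated with a citation to Martinet (Theorem~2.2 of \cite{MR0286119}) and used as a black box. So there is no in-paper argument to compare against.

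Your proposal is essentially the standard Martinet argument, and it is correct. The codimension computation via the incidence variety $\Sigma\subset\mathbb{P}(V)\times\Lambda^kV^*$ is the right mechanism, and your numerical check $\binom{n-1}{2}>2n-1\Leftrightarrow n\ge 7$ correctly identifies the threshold. The key point you emphasize---that exact perturbations suffice to make the evaluation map submersive at each point, so that transversality can be achieved without leaving the given de~Rham class---is exactly the observation that distinguishes this from a naive transversality statement, and your local formula for $\beta$ does the job. Working transverse to the \emph{map} $\widetilde\Sigma\to\Lambda^kT^*M$ rather than to its image is the right move, since the degenerate locus need not be a submanifold. The openness and non-compact globalisation remarks are routine but correctly stated.
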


Though in this article we do not study ``Hamiltonian dynamics'' on general multisymplectic manifolds in detail, we would like to give (and use) the following fundamental

\begin{definition}\label{defhddw}
	Let $(M,\omega)$ be an $n$-plectic manifold. A couple $(X,H)\in \Omega^k(M)\times \mathfrak X^{n{-}k}(M)$ with $0\leq k\leq n{-}1$ is called a solution of the ``Hamilton-DeDonder-Weyl (or HDW)'' equation if $$\iota_X\omega=-dH.$$
\end{definition}
\begin{remark}(Compare Remark \ref{rem:lag}.)\label{rem-end-2}
	\begin{enumerate}
		\item If either $H$ or $X$ is fixed in advance, the other element of a solution couple is also called a solution of the HDW equation $\iota_X\omega=-dH$.
		\item If $(X,H)$ is a solution of the HDW equation with $X$ an $m$-vector field (with $1\leq m\leq n$) and $\Sigma$ an $m$-dimensional manifold with a section $\gamma$ of $\Lambda^mT\Sigma$, then a smooth map $\Psi:\Sigma\to M$ is called a ``Hamiltonian $m$-curve'' (with respect to the Hamiltonian form $H$ on $M$) if $\forall x\in   \Sigma, (\Psi_*)_x(\gamma_x)=X_H(\Psi(x))$. Typically $\mathrm{vol}^\Sigma$ is a non-vanishing $m$-form on $\Sigma$ and $\gamma$ is the unique $m$-vector field satisfying $\iota_\gamma\mathrm{vol}^\Sigma=1$. Then there are local coordinates on $\Sigma$ such that $ \mathrm{vol}^\Sigma=dx^1\wedge ...\wedge dx^m$ and $\gamma=\frac{\partial}{\partial x^1}\wedge ....\wedge \frac{\partial}{\partial x^m}$, facilitating local computations.
	\end{enumerate}
\end{remark}

\section{Darboux type theorems}\label{sec:3}
A very important tool in symplectic geometry is the Darboux theorem stating that, given a point $p$ in a symplectic manifold $(M,\omega)$, there exist local coordinates $(x^1,...,x^{2m})$ near $p$ such that in these coordinates $\omega=dx^1\wedge dx^2+...+dx^{2m-1}\wedge dx^{2m}$. The existence of such coordinates relies on the two facts that all $2m$-dimensional symplectic vector spaces are linearly isomorphic and that locally a symplectic manifold $(M,\omega)$ is diffeomorphic to the linear symplectic manifold $(T_pM,\omega_p)$ for any $p\in M$. Neither of these results pertain to a general multisymplectic manifold $(M,\omega)$ and ``flatness'' of such a manifold, i.e. the existence of local coordinates such that $(M,\omega)$ can locally be identified with $(T_pM,\omega_p)$ for $p\in M$ turns out to be a rather special situation. We report in Subsection \ref{sub:31} on the linear multisymplectic case, without giving any proofs. After recalling the advantageous cases of symplectic and volume forms, and of certain ``multicotangent type manifolds'' in Subsections \ref{sub:symp} and \ref{sub:martin}, we give new results in Subsections \ref{sub:new1} and \ref{sub:new2}. In Subsection \ref{subs:2pl6} we recall the three possible cases for flat 2-plectic manifolds before giving an elementary construction to obtain 2-plectic forms on $\mathbb R^6$ that show that the ``linear type'' can change in a multisymplectic manifold, as well as that flatness may fail even when the linear type is constant throughout the manifold. Though similar examples exist in the literature, we included our constructions for their extreme simplicity.\\
In the last subsection, \ref{sub:lie}, we show that the canonical 2-plectic structure on a real simple Lie group is not flat unless the dimension of the Lie group is three. Though the linear type of these 2-plectic structures is constant, the result is rather natural but we were not aware of a proof of it in the literature.

\subsection{Linear types of multisymplectic manifolds}\label{sub:31}
In this subsection we will briefly discuss results concerning the linear types of multisymplectic manifolds. 
\begin{definition}
	A ``$k$-plectic vector space'' (over $\mathbb R$) is a pair $(V,\eta)$, where $V$ is a finite-dimensional $\mathbb R$-vector space and $\eta\in \Lambda^{k+1}V^*$ is non-degenerate, i.e. $\iota_\bullet\eta:V\to \Lambda^kV^*,~~v\mapsto \iota_v\omega$ is injective. A ``linear multisymplectomorphism'' $L$ between $(V,\eta)$ and $(\tilde V,\tilde \eta)$ is a linear isomorphism $L:V\to \tilde V$ satisfying $L^*\tilde \eta=\eta$. A ``$k$-plectic linear type''  is an isomorphism class of such pairs $(V,\eta)$. 
\end{definition}

\noindent Multisymplectomorphic vector spaces have equal dimensions, so we can ask: ``How many $(k{+}1)$-plectic linear types are there in dimension $n$?'' An answer is given by the following theorem:

\begin{theorem}Let $\Sigma^k_n$ denote the number of $(k{+}1)$-plectic linear types in dimension $n$. Then we have 
\begin{itemize}
	\item $\Sigma_n^n=1$ for all $n$, and $\Sigma^1_n$ as well as $\Sigma^{n{-}1}_n$ are zero for $n>1$.
	\item $\Sigma^2_n$ is 0 for $n$ odd and one for $n$ even.
	\item $\Sigma_{n}^{n-2}=\lfloor \frac{n}{2}\rfloor-1$, when $(n \mod 4)\neq 2$ (for $n\geq 4$) and
	$\Sigma_{n}^{n-2}= \frac{n}{2}$, when $(n \mod 4)=2$ (for $n\geq 4$).
	\item $\Sigma_6^3=3$, $\Sigma_7^3=8$  ,  $\Sigma_8^3=21$,  $\Sigma_7^4=15$ and $\Sigma_8^5=31$ .
 \item $\Sigma_n^k=\infty$ in all other cases.
\end{itemize}
\end{theorem}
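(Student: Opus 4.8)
The plan is to read the statement as a catalogue of classification results for $GL(n,\mathbb{R})$-orbits of non-degenerate $k$-forms on $\mathbb{R}^n$ (the linear types enumerated by $\Sigma^k_n$) and to prove it block by block, in increasing order of difficulty: an elementary part, a ``codimension two'' part that I reduce to bivectors via Hodge duality, a short list of exceptional values taken from the classification of alternating $3$-forms, and a dimension count that accounts for every remaining entry being $\infty$.

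First I would dispose of the elementary cases. For $\Sigma^n_n$: two nonzero $n$-forms on $\mathbb{R}^n$ differ by a nonzero scalar, and a diagonal element of $GL(n,\mathbb{R})$ scaling one coordinate realises any prescribed determinant, so all such forms are linearly equivalent; hence $\Sigma^n_n=1$. For $\Sigma^1_n$ with $n>1$: a non-degenerate $1$-form would make $\iota_\bullet\omega\colon\mathbb{R}^n\to\mathbb{R}$ injective, which is impossible. For $\Sigma^{n-1}_n$ with $n>1$: after fixing a volume form $\mu$, every $(n-1)$-form equals $\iota_v\mu$ for a unique $v$, and $v$ then lies in the kernel of $\iota_\bullet\omega$, so no non-degenerate $(n-1)$-form exists. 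For $\Sigma^2_n$: the rank of an alternating bilinear form is even, so a non-degenerate $2$-form forces $n$ even, and the standard symplectic-basis argument then gives exactly one orbit.

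Next, for $\Sigma^{n-2}_n$ I would fix a volume form $\mu$ and use the isomorphism $\Lambda^{n-2}(\mathbb{R}^n)^*\to\Lambda^2\mathbb{R}^n$, $\omega\mapsto\beta$, determined by $\omega=\iota_\beta\mu$. The identity $\iota_v\omega=\iota_{v\wedge\beta}\mu$ yields $\iota_v\omega=0\Leftrightarrow v\wedge\beta=0$, so $\omega$ is non-degenerate exactly when $\{v\in\mathbb{R}^n:v\wedge\beta=0\}=0$, which --- by a brief computation in a symplectic basis of $\beta$ --- happens precisely when $\operatorname{rank}\beta\ge4$. Bivectors are classified by rank, but the correspondence intertwines $GL(n,\mathbb{R})$ with its action $\beta\mapsto(\det L)^{-1}L_*\beta$ on $\Lambda^2\mathbb{R}^n$, so one must keep track of the scalar $(\det L)^{-1}$. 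If $\operatorname{rank}\beta=2r<n$, the stabiliser of the rank-$2r$ orbit contains elements of every determinant, the scalar is absorbed, and each admissible rank $4,6,\dots$ contributes one type. If $\operatorname{rank}\beta=n$ (so $n$ is even), the stabiliser is $Sp(n,\mathbb{R})\subset SL(n,\mathbb{R})$, which pins $\det L$ down; tracking it --- equivalently, tracking the class of $\omega^{\wedge n/2}$ --- shows the full-rank type is a single orbit when $n\equiv0\pmod4$ and splits into the two classes of $\pm\beta$ when $n\equiv2\pmod4$. Summing the $\lfloor n/2\rfloor-1$ admissible ranks and this extra $0$ or $1$ produces the stated formula.

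Finally, the exceptional values $\Sigma^3_6,\Sigma^3_7,\Sigma^3_8$ --- and, via the isomorphism $\Lambda^{n-k}(\mathbb{R}^n)^*\cong\Lambda^k\mathbb{R}^n\otimes\Lambda^n(\mathbb{R}^n)^*$, also $\Sigma^4_7$ and $\Sigma^5_8$, which amount to non-degeneracy questions for $3$-vectors on $\mathbb{R}^7$ and $\mathbb{R}^8$ --- I would obtain by invoking the classical (finite) classification of alternating $3$-forms over $\mathbb{C}$ in dimensions at most $8$, passing to real forms, and then reading off which real orbits make $\iota_\bullet\omega$ injective and which are pairwise non-isomorphic; this last step is essentially the content of \cite{1609.02184}. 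For every remaining $(n,k)$ the non-degenerate forms constitute a non-empty $GL(n,\mathbb{R})$-invariant open subset of $\Lambda^k(\mathbb{R}^n)^*$ (non-emptiness by the genericity result of \cite{MR0286119}); if there were only finitely many orbits, one of them would be open, hence of dimension $\binom{n}{k}\le\dim GL(n,\mathbb{R})=n^2$, and one checks $\binom{n}{k}>n^2$ in every remaining case (already at $n=8,\,k=4$, and a fortiori whenever $n\ge9$). The hard part is the exceptional block: the passage from the complex classification of $3$-forms to the real one in dimensions $6,7,8$ is delicate --- a single complex orbit can break into several real ones, non-degeneracy has to be decided orbit by orbit, and verifying that the resulting real list is complete is where the real work lies.
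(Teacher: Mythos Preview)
Your proposal is correct and considerably more explicit than the paper's own proof, which consists entirely of citations: Martinet \cite{MR0286119} for ``most cases'', the classical references \cite{MR0461545, MR630147, MR691457} for $3$-forms in dimensions $6$, $7$, $8$, and Ryvkin \cite{1609.02184} for the remainder. Your block-by-block strategy --- the elementary bullets by direct linear algebra, $\Sigma^{n-2}_n$ via the Hodge-type isomorphism to bivectors with a careful analysis of the determinant twist on the full-rank orbit, the exceptional values by appeal to the real classification of trilinear alternating forms, and the infinite cases by the open-orbit dimension bound $\binom{n}{k}\le n^2$ combined with Martinet's genericity for non-emptiness --- is exactly the content one finds upon unpacking those references. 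In particular, your treatment of $\Sigma^{n-2}_n$ (distinguishing the full-rank orbit splitting when $n\equiv 2\pmod 4$ via the sign of $\beta^{n/2}$) and your reduction of $\Sigma^4_7,\Sigma^5_8$ to questions about $3$-vectors are precisely the arguments carried out in \cite{1609.02184}. The paper simply cites; you have reconstructed the argument.
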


\begin{proof}
	Most cases have been settled in \cite{MR0286119}. Three-forms in dimensions six, seven and eight have  been handled by \cite{MR0461545, MR630147, MR691457} and the remaining cases are settled in \cite{1609.02184}.
\end{proof}

\noindent For dimensions up to 10 the numbers look as follows, where the rows range from 0-forms (the ``$-$'' in the table) to $n$-forms:

\begin{align*}
\begin{array}{c|ccccccccccccccccccccc}
n=0&&&&&&&&&&&-\\[1.5em]
n=1&&&&&&&&&&-&&1\\[1.5em]
n=2&&&&&&&&&-&&0&&1\\[1.5em]
n=3&&&&&&&&-&&0&&0&&1\\[1.5em]
n=4&&&&&&&-&&0&&1&&0&&1\\[1.5em]
n=5&&&&&&-&&0&&0&&1&&0&&1\\[1.5em]
n=6&&&&&-&&0&&1&&3&& 3&&0&&1\\[1.5em]
n=7&&&&-&&0&&0&& 8&& 15&&2&&0&&1\\[1.5em]
n=8&&&-&&0&&1&& 21&&\infty&&31&&3&&0&&1\\[1.5em]
n=9&&-&&0&&0&&\infty&&\infty&&\infty&&\infty&&3&&0&&1\\[1.5em]
n=10&-&&0&&1&&\infty&&\infty&&\infty&&\infty&&\infty&& 5&&0&&1\\[1.5em]
\end{array}
\end{align*}
\subsection{Symplectic and volume forms}\label{sub:symp}
The next few subsections are motivated by these two classical theorems (\cite{MR0182927, MR0286137,MR997295}):
\begin{theorem}
	Let $(M,\omega)$ be a 1-plectic (i.e. symplectic) manifold of dimension $n=2m$ and $p\in M$. Then there exists a chart near $p$ $M\supset U\overset{\phi}\to \mathbb R^{2m}$ such that 
	\[\omega=\phi^*(dx^1\wedge dx^2+....+dx^{2m-1}\wedge dx^{2m}).\]
\end{theorem}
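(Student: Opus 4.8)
The plan is to reduce the statement to a linear normal form at $p$ and then to interpolate between that model and the standard form on a neighborhood of $p$ by \emph{Moser's trick}. First I would establish the linear statement: every finite-dimensional symplectic vector space $(V,\eta)$ admits a basis $e_1,\dots,e_m,f_1,\dots,f_m$ with $\eta(e_i,f_j)=\delta_{ij}$ and $\eta(e_i,e_j)=\eta(f_i,f_j)=0$, a \emph{symplectic basis}. This follows by an inductive, Gram--Schmidt-type argument: pick any $e_1\neq 0$, use non-degeneracy to find $f_1$ with $\eta(e_1,f_1)=1$, note that $\eta$ is again non-degenerate on the $\eta$-orthogonal complement of $\mathrm{span}(e_1,f_1)$, and repeat. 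Applying this to $(T_pM,\omega_p)$ and passing to the corresponding linear coordinates, we may assume from the outset that $M=U$ is an open neighborhood of the origin in $\mathbb R^{2m}$, that $\omega_0:=dx^1\wedge dx^2+\dots+dx^{2m-1}\wedge dx^{2m}$ is the standard constant-coefficient form, and that $\omega$ and $\omega_0$ agree at the origin.

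Next comes the deformation step. Set $\omega_t:=\omega_0+t(\omega-\omega_0)$ for $t\in[0,1]$. Each $\omega_t$ equals $\omega_0$ at the origin, hence is non-degenerate there; since non-degeneracy is an open condition on $2$-forms and $\{0\}\times[0,1]$ is compact, after shrinking $U$ every $\omega_t$ is symplectic on $U$. As $\omega-\omega_0$ is closed and $U$ may be taken contractible, the Poincar\'e lemma yields a $1$-form $\sigma$ on $U$ with $d\sigma=\omega-\omega_0$; subtracting from $\sigma$ the differential of a suitable linear function, we may also assume that $\sigma$ vanishes at the origin. Define a time-dependent vector field $X_t$ on $U$ by $\iota_{X_t}\omega_t=-\sigma$, which is well defined and smooth because each $\omega_t$ is non-degenerate. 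Since $\sigma$ vanishes at the origin, $X_t(0)=0$ for every $t$, so the constant path at $0$ is an integral curve; by compactness of $[0,1]$ and continuous dependence on initial data, the flow $\phi_t$ of $X_t$ is defined on a (possibly smaller) neighborhood of $0$ for all $t\in[0,1]$, and fixes $0$.

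It then remains to compute, using Cartan's formula, closedness of the $\omega_t$, and $\frac{d\omega_t}{dt}=\omega-\omega_0=d\sigma$:
\[
\frac{d}{dt}\,\phi_t^*\omega_t=\phi_t^*\!\left(\mathcal L_{X_t}\omega_t+\frac{d\omega_t}{dt}\right)=\phi_t^*\!\left(d\,\iota_{X_t}\omega_t+d\sigma\right)=\phi_t^*\!\left(-d\sigma+d\sigma\right)=0 .
\]
Hence $\phi_1^*\omega=\phi_1^*\omega_1=\phi_0^*\omega_0=\omega_0$ near $p$; combined with the linear coordinates of the first step, this produces the desired Darboux chart near $p$.

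The purely linear classification is routine, so the part of the argument that demands real care is the passage from the linear to the local statement, i.e.\ the Moser step. There one must watch, first, that $\omega_t$ stays non-degenerate throughout (ensured by openness of non-degeneracy together with compactness of the parameter interval), and second, that the flow $\phi_t$ is simultaneously defined on one fixed neighborhood of $p$ for all $t\in[0,1]$ (ensured by $X_t$ vanishing at $p$, which also makes $\phi_t$ fix $p$). Conceptually, the trick hinges on a single point: $\omega-\omega_0$ is not merely closed but \emph{exact} near $p$, and it is this exactness that lets the deformation be undone by a flow --- exactly the type of input (linear homogeneity of the model and local exactness) that, as the surrounding text stresses, generally fails for multisymplectic manifolds.
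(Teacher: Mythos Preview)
Your argument is correct and is the standard Moser-trick proof of the Darboux theorem. Note, however, that the paper does not actually give its own proof of this statement: it merely records the theorem as classical and cites the original references (Moser, Weinstein, and a standard textbook). Your proposal therefore supplies what the paper omits, and it does so along the lines of the cited sources --- the Moser deformation argument is precisely the approach of \cite{MR0286137} and is the modern textbook proof. The care you take with the two genuine analytic points (uniform non-degeneracy of $\omega_t$ on a neighborhood, and existence of the flow up to $t=1$ on a fixed neighborhood via $X_t(0)=0$) is appropriate and complete.
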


\begin{theorem}\label{voldarboux}
	Let $n\geq 2$ and $(M,\omega)$ be a $({n{-}1})$-plectic manifold of dimension $n$ (i.e. a manifold with a volume form), and $p\in M$. Then there exists a chart near $p$ $M\supset U\overset{\phi}\to \mathbb R^{n}$ such that 
	\[\omega=\phi^*(dx^1\wedge ....\wedge dx^{n}).\]
\end{theorem}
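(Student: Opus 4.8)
The plan is to use Moser's path method, following the classical proof of the volume-form Darboux theorem. First I would take an arbitrary chart $\psi\colon U\to B$ of $M$ near $p$, with $B\subset\mathbb R^{n}$ an open ball and $\psi(p)=0$, and push $\omega$ forward: $\psi_{*}\omega=f\,dx^{1}\wedge\cdots\wedge dx^{n}=:\omega_{1}$, where $f$ is a smooth nowhere-vanishing function on $B$. Composing $\psi$ with a reflection in one coordinate if necessary we may assume $f>0$, and after a linear rescaling of the coordinates also $f(0)=1$. Writing $\omega_{0}:=dx^{1}\wedge\cdots\wedge dx^{n}$, the task reduces to producing a diffeomorphism $\Phi$ of a neighbourhood of $0$ in $\mathbb R^{n}$ onto its image with $\Phi^{*}\omega_{1}=\omega_{0}$; then $\phi:=\Phi^{-1}\circ\psi$ is the sought chart near $p$, since $(\psi^{-1}\circ\Phi)^{*}\omega=\Phi^{*}(\psi_{*}\omega)=\Phi^{*}\omega_{1}=\omega_{0}$, which is exactly $\omega=\phi^{*}(dx^{1}\wedge\cdots\wedge dx^{n})$.

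To build $\Phi$ I would interpolate, setting $\omega_{t}:=(1-t)\omega_{0}+t\omega_{1}=\big((1-t)+tf\big)\,dx^{1}\wedge\cdots\wedge dx^{n}$ for $t\in[0,1]$. Since $(1-t)+tf(0)=1$ and $f$ is continuous, after shrinking $B$ each $\omega_{t}$ is a volume form, hence $(n{-}1)$-plectic, so the bundle map $v\mapsto\iota_{v}\omega_{t}$ is a fibrewise isomorphism $TB\to\Lambda^{n-1}T^{*}B$. The $n$-form $\dot\omega_{t}=\omega_{1}-\omega_{0}$ is closed (being of top degree) on the ball $B$, so by the Poincaré lemma $\omega_{1}-\omega_{0}=d\beta$ for some $(n{-}1)$-form $\beta$, which, using the standard homotopy operator, I would take with $\beta(0)=0$. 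I then define the time-dependent vector field $X_{t}$ as the unique solution of $\iota_{X_{t}}\omega_{t}=-\beta$ (this uses the non-degeneracy of $\omega_{t}$); it depends smoothly on $(t,x)$ and satisfies $X_{t}(0)=0$ for every $t$, because $\iota_{X_{t}(0)}(\omega_{t})_{0}=-\beta(0)=0$ and $(\omega_{t})_{0}$ is non-degenerate.

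Finally I would integrate $X_{t}$. Since $0$ is a common zero of the $X_{t}$, it is a fixed point of their time-dependent flow; as the flow domain is open and $[0,1]$ is compact, a tube-lemma argument gives a neighbourhood $W$ of $0$ on which the flow $\phi_{t}$ exists for all $t\in[0,1]$, with $\phi_{t}(0)=0$. From $d\omega_{t}=0$ and Cartan's formula, $\mathcal L_{X_{t}}\omega_{t}=d\,\iota_{X_{t}}\omega_{t}=-d\beta=-(\omega_{1}-\omega_{0})=-\dot\omega_{t}$, so $\tfrac{d}{dt}\big(\phi_{t}^{*}\omega_{t}\big)=\phi_{t}^{*}\big(\mathcal L_{X_{t}}\omega_{t}+\dot\omega_{t}\big)=0$, hence $\phi_{1}^{*}\omega_{1}=\phi_{0}^{*}\omega_{0}=\omega_{0}$ on $W$ (shrink $W$ so that $\phi_{1}(W)\subset B$); take $\Phi:=\phi_{1}|_{W}$. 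The one delicate point, and the main obstacle, is guaranteeing that the flow is defined on the whole interval $[0,1]$ over a fixed neighbourhood of $p$ rather than merely for short times: this is exactly what the choice $\beta(0)=0$, which forces $X_{t}(0)=0$, arranges. The remaining ingredients — the Poincaré lemma, the inversion of $\iota_{\bullet}\omega_{t}$ furnished by non-degeneracy, and the differentiation under the pullback — are routine.
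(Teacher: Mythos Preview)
Your proof is correct and is precisely the classical Moser path argument. The paper does not give its own proof of this theorem but merely cites it as a classical result (the references include Moser's original 1965 paper), so your argument is essentially the one being invoked.
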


\noindent Each of these theorems can be decomposed into two statements:
\begin{enumerate}[(i)]
	\item Any symplectic form (resp. volume form) has the linear type of $dx^1\wedge dx^2+....+dx^{2m-1}\wedge dx^{2m}$ (resp. $dx^1\wedge ....\wedge dx^{n}$).
	\item Around any point $p$ the symplectic resp. $(n{-}1)$-plectic manifold $(M,\omega)$ is locally isomorphic to $(T_pM,\omega_p)$. (I.e. around $p$ there exists a chart $\phi:M\supset U\to T_pM$ such that $\phi(p)=0$ and $\phi^*\omega_p=\omega$.)
\end{enumerate}
As we have seen in the last subsection, there is no hope for (i) to hold for $k$ other than $1,n{-}1$. In the sequel we will investigate conditions for (ii) to hold. For this we will formulate the following property:

\begin{definition}
	A multisymplectic manifold $(M,\omega)$ is called ``flat near $p$'' for $p\in M$, if there exists a chart $\phi:U\to T_pM$ such that $\phi(p)=0$ and $\phi^*\omega_p=\omega$. It is called ``flat'' if it is flat near all $p$. Of course, $\omega_p$ is here interpreted as a constant-coefficient differential form on the manifold $T_pM$.
\end{definition}

\subsection{Multicotangent bundles}\label{sub:martin}
In this subsection we recall the situation for  multisymplectic manifolds, whose linear types correspond to that of a multicotangent bundle $(\Lambda^nT^*Y, \omega=-d\theta)$ from Example \ref{exmulticotangent}. 

\begin{definition}
	 A real $n$-plectic vector space $(V,\omega)$ is called ``standard'' if there exists a linear subspace $W\subset V$ such that $\forall u,v\in W$, $\iota_{u\wedge v}\omega=0$ and $$\omega^\#:W\to \Lambda^ n(V/W)^*, ~~~w\mapsto \left( 
	 (v_1+W,...,v_n+W)\mapsto \omega(w,v_1,...,v_n)
	 \right)$$
	 is an isomorphism.
\end{definition}

\begin{remark}
	In the above situation $W$ is unique if $n\geq 2$ and then often denoted $W_\omega$.
\end{remark}

\noindent From \cite{MR962194, MR1694063} the following result can easily be derived: 

\begin{theorem}\label{multithm} Let $n\geq 2$ and $(M,\omega)$ be a standard $n$-plectic manifold, i.e. $(M,\omega)$ has as constant linear type a fixed standard $n$-plectic vector space. Then $\mathcal W_\omega=\bigsqcup_{p\in M}W_{\omega_p}\subset \bigsqcup_{p\in M}T_pM=TM$ is a smooth distribution.
Furthermore, $(M,\omega)$ is flat if and only if $\mathcal W_\omega$ is integrable.
\end{theorem}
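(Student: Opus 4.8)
The plan is to prove the two assertions in turn, the smoothness of $\mathcal W_\omega$ first and the flatness criterion afterwards, separating in the latter the ``easy'' implication (flat $\Rightarrow$ integrable) from the ``hard'' one (integrable $\Rightarrow$ flat). Throughout, the crucial elementary input is that for $n\geq 2$ the subspace $W_\eta$ of a standard $n$-plectic vector space $(V,\eta)$ is \emph{unique}, hence \emph{natural}: any linear multisymplectomorphism $L\colon(V,\eta)\to(V',\eta')$ carries $W_\eta$ onto $W_{\eta'}$, since $L(W_\eta)$ visibly satisfies the two defining conditions for $\eta'$ and uniqueness does the rest.

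\emph{Smoothness of $\mathcal W_\omega$.} Fix $p_0\in M$, choose a linear isomorphism $T_{p_0}M\cong V$ sending $\omega_{p_0}$ to the fixed standard model $\eta_0$, and extend it to a smooth trivialisation of $TM$ over a neighbourhood of $p_0$; then $\omega$ becomes a smooth map into $\Lambda^{n+1}V^*$ whose image lies in the $GL(V)$-orbit $\mathcal O$ of $\eta_0$. As $\mathcal O$ is a homogeneous $GL(V)$-space and the projection $GL(V)\to\mathcal O$ admits local smooth sections, after shrinking we obtain a smooth map $p\mapsto g_p\in GL(V)$ with $g_p^*\eta_0=\omega_p$, whence by naturality $W_{\omega_p}=g_p^{-1}(W_{\eta_0})$ depends smoothly on $p$. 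Its dimension is constant (equal to that of $W_{\eta_0}$, by constancy of the linear type), so transporting the picture back through the trivialisation exhibits $\mathcal W_\omega$ as a smooth subbundle of $TM$, i.e.\ a smooth distribution.

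\emph{Flat $\Rightarrow$ integrable.} Given a flattening chart $\phi\colon U\to T_pM$ with $\phi(p)=0$ and $\phi^*\omega_p=\omega$, naturality of $W$ under the linear multisymplectomorphisms $(d\phi)_q$ shows that $\phi$ maps $\mathcal W_\omega|_U$ onto $\mathcal W_{\omega_p}$ over the corresponding open subset of $T_pM$; but $\mathcal W_{\omega_p}$ is the \emph{constant} distribution $q\mapsto W_{\omega_p}$, whose leaves are the affine translates of $W_{\omega_p}$, hence integrable. Since integrability is a local property and $\phi$ is a diffeomorphism, $\mathcal W_\omega$ is integrable near $p$, hence globally.

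\emph{Integrable $\Rightarrow$ flat -- the main obstacle.} Assume $\mathcal W=\mathcal W_\omega$ is integrable. By the Frobenius theorem there is near any $p$ a submersion $\pi\colon U\to\Sigma$ with $\ker\pi_*=\mathcal W|_U$. By the definition of $W_{\omega_q}$ one has $\iota_u\iota_v\omega=0$ for all $\pi$-vertical $u,v$ (so $\omega$ has ``vertical degree $\leq 1$''), while the isomorphism $\omega^\#$ identifies $\mathcal W$ fibrewise with $\pi^*\Lambda^nT^*\Sigma$, fixes the fibre dimension, and makes each fibre isotropic of the model dimension. What remains is a \emph{relative} Darboux theorem: a closed, non-degenerate $(n{+}1)$-form with these properties relative to a fibration is, locally and after a fibre-preserving diffeomorphism, the canonical form $\omega^{\Lambda^nT^*\Sigma}=-\sum_I dp_I\wedge dx^I$ on an open subset of $\Lambda^nT^*\Sigma$; this is exactly what one derives from \cite{MR962194, MR1694063}, and it is here that essentially all the difficulty lies. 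I expect its proof to proceed either by explicitly constructing the momentum coordinates $p_I$ -- using $\omega^\#$ to produce the requisite closed one-forms and integrating them via a Poincar\'e-lemma-with-parameters argument, the obstruction vanishing because $d\omega=0$ -- or by a Moser-type homotopy $\omega_t=(1-t)\omega+t\,\omega^{\mathrm{lin}}$ in adapted coordinates, which succeeds in this standard/fibred situation although the naive Moser method fails for general multisymplectic forms, precisely because here $\iota_\bullet\omega$ is surjective onto the forms of the relevant vertical degree. Finally, since $-\sum_I dp_I\wedge dx^I$ is a constant-coefficient form it agrees with its own value $\omega_p$ in suitable coordinates, so this normal form yields flatness of $(M,\omega)$ near $p$.
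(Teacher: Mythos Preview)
The paper does not actually give a proof of this theorem: it merely states that the result ``can easily be derived'' from \cite{MR962194, MR1694063} and moves on. Your proposal therefore goes well beyond what the paper offers, supplying a genuine argument for smoothness of $\mathcal W_\omega$ (via a local section of $GL(V)\to\mathcal O$ and naturality of $W$) and for the easy implication (flat $\Rightarrow$ integrable), both of which are correct and cleanly done.

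For the hard implication (integrable $\Rightarrow$ flat) you correctly identify the reduction: Frobenius gives a local fibration $\pi$ with $\ker\pi_*=\mathcal W$, the two defining properties of $W_{\omega_q}$ translate into $\omega$ being of vertical degree $\leq 1$ and $\omega^\#$ providing a bundle isomorphism $\mathcal W\cong\pi^*\Lambda^nT^*\Sigma$, and what remains is precisely Martin's normal-form theorem for such forms, which is the content of \cite{MR962194}. Your two suggested strategies (building the $p_I$ by hand, or a fibred Moser argument) are both viable and indeed both appear in the literature; the point that $\iota_\bullet\omega$ surjects onto the relevant space of forms is exactly what makes the Moser path work here where it fails in general. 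So your outline is sound, though in the end you too defer the core computation to the same references the paper cites --- which is entirely appropriate, since that is where the work is actually done.
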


\subsection{Multisymplectic manifolds of product type}
In this subsection we study the local normal form for multisymplectic structures having as (constant) linear type the sum of $k$ $m$-dimensional vector spaces, each supplied with a volume form. It turns out that flatness arises exactly if all elements in a certain intrinsically defined collection of $m$-forms are closed.

\label{sub:new1}
\begin{theorem}\label{prodthm}
	Let $k\geq 2$, $m>2$ and $U\subset \mathbb R^{km}$ be open and $\omega\in\Omega^m_{cl}(U)$ be of linear type $dx^{1,2,...,m}+dx^{m+1,...,2m}+...+dx^{(k-1)m+1,...,km}$. Then there is a decomposition $\omega=\omega_1+...+\omega_k$, where $\omega_1,...,\omega_k\in \Omega^m(U)$ such that $rank(\omega_i)=m$. The forms $\omega_i$ are unique up to permutation.\\ 
	
	\noindent Furthermore, $(U,\omega)$ is flat if and only if $d\omega_i=0$ for all $i\in\{1,...,k\}$.
\end{theorem}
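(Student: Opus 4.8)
The plan is to establish the decomposition and its uniqueness first, and then the flatness criterion, the hypothesis $m\geq 3$ entering decisively in the pointwise linear algebra. Write $\eta_0:=dx^{1,\dots,m}+\dots+dx^{(k-1)m+1,\dots,km}$ for the model form on $V_0:=\mathbb{R}^{km}=\bigoplus_{i=1}^{k}W_i^{0}$. The core claim is that, for a form $\eta$ on a vector space $V$ of the linear type of $\eta_0$ — so that $\eta=\sum_i\eta_i$ with $\eta_i$ a volume form on a block $W_i$ of a splitting $V=\bigoplus_i W_i$ — the subspaces $W_i$ are \emph{canonically determined} by $\eta$. I would prove this by showing that $W_1\cup\dots\cup W_k$ coincides with $\mathcal{D}:=\{v\in V\mid\iota_v\eta\ \text{is decomposable}\}$: if $v\in W_i$, then $\iota_v\eta=\iota_v\eta_i$ is the contraction of a volume form with a vector and hence decomposable; whereas if $v$ has nonzero components in two distinct blocks $W_i,W_j$, then $\iota_v\eta$ is a sum of at least two nonzero forms of degree $m-1$ supported on the independent blocks $W_i^{*},W_j^{*},\dots$, so — using $m-1\geq 2$ — its support is the direct sum of the individual supports and its rank is at least $2(m-1)>m-1$, forcing $\iota_v\eta$ to be non-decomposable. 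By the standard fact that a vector space is not a finite union of proper subspaces, the maximal linear subspaces contained in $\mathcal{D}$ are exactly the $W_i$, so the splitting, and with it the components $\omega_i$, are determined up to permutation; this is the step that requires $m>2$.

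Since constant linear type means that the linear isomorphisms $(T_pM,\omega_p)\to(V_0,\eta_0)$ form a principal bundle under $\mathrm{Stab}(\eta_0)\subset GL(km,\mathbb{R})$, which admits smooth local sections, there exist smooth local coframes $(e^1,\dots,e^{km})$ with $\omega=\sum_i e^{(i-1)m+1}\wedge\dots\wedge e^{im}$; the summands are rank-$m$ forms, and by the pointwise uniqueness above they do not depend on the coframe up to permutation, which yields the global forms $\omega_i$ on $U$. The easy direction of the flatness statement is then immediate: if $(U,\omega)$ is flat near $p$, a flattening chart gives functions $\tilde x^{1},\dots,\tilde x^{km}$ with $\omega=d\tilde x^{1,\dots,m}+\dots+d\tilde x^{(k-1)m+1,\dots,km}$, and by uniqueness each $\omega_i$ equals one of these closed blocks, so $d\omega_i=0$ near $p$, hence everywhere on $U$.

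For the converse, suppose $d\omega_i=0$ for all $i$. Each $\omega_i$ is then closed of constant rank $m$, so $\ker\omega_i$ is a smooth distribution of corank $m$, and it is integrable: for $v,w\in\Gamma(\ker\omega_i)$ one has $\iota_{[v,w]}\omega_i=\mathcal{L}_v\iota_w\omega_i-\iota_w\mathcal{L}_v\omega_i=0$, since $\iota_v\omega_i=\iota_w\omega_i=0$ and $\mathcal{L}_v\omega_i=d\iota_v\omega_i+\iota_v d\omega_i=0$. By the Frobenius theorem, near any $p$ the form $\omega_i$ is basic with respect to a codimension-$m$ foliation, hence $\omega_i=\mathrm{pr}_i^{*}\mu_i$ for a submersion $\mathrm{pr}_i$ onto an open subset of $\mathbb{R}^m$ and an $m$-form $\mu_i$ there; since $\omega_i$ is nowhere zero and of rank $m$, the form $\mu_i$ is a volume form, and the volume normal form in dimension $m$ (Theorem \ref{voldarboux}) provides functions $y_i^{1},\dots,y_i^{m}$ near $p$ with $\omega_i=dy_i^{1}\wedge\dots\wedge dy_i^{m}$ and $\mathrm{span}_j\,dy_i^{j}=(\ker\omega_i)^{\mathrm{ann}}$.

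It remains to see that the $km$ functions $(y_i^{j})_{1\le i\le k,\,1\le j\le m}$ form a chart near $p$, i.e.\ that their differentials are linearly independent there; equivalently that $\sum_i(\ker\omega_i(p))^{\mathrm{ann}}=T_p^{*}M$, which amounts to $\bigcap_i\ker\omega_i(p)=0$. This holds because the $\omega_i(p)$ are, up to permutation, the canonical pieces of $\omega_p$, and the corresponding intersection vanishes for the model $\eta_0$. In these coordinates $\omega=\sum_i dy_i^{1}\wedge\dots\wedge dy_i^{m}$ has constant coefficients equal to $\omega_p$, so after recentering at $p$ we obtain a flattening chart, and $(U,\omega)$ is flat. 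I expect the main obstacle to be the uniqueness argument of the first part — in particular the additivity of support and rank of forms of degree at least $2$ over independent blocks, which is exactly what breaks for $m=2$ — while the remainder is a fairly routine combination of the Frobenius theorem and the volume Darboux theorem, the one point needing attention being the independence of the assembled functions $y_i^{j}$.
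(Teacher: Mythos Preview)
Your argument is correct. The pointwise uniqueness step is essentially the same as the paper's Lemma~\ref{prodlemma}: both pivot on the observation that $\iota_v\omega$ is decomposable precisely when $v$ lies in a single block, and both use $m-1\geq 2$ at exactly this point. Your phrasing via the set $\mathcal{D}$ and its maximal linear subspaces is a clean direct version of what the paper does by contradiction.

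The flatness direction is where you take a genuinely shorter route. The paper introduces the auxiliary closed $(k-1)m$-forms $\Omega_i=\omega_1\wedge\cdots\wedge\widehat{\omega_i}\wedge\cdots\wedge\omega_k$ and reads off integrability of the distributions $E_i=\ker\Omega_i$ (and of all their partial sums) from $d\Omega_i=0$; only then does it pass to the complementary distributions $E_i'=\bigoplus_{j\neq i}E_j$ to build the submersions. You bypass this entirely by noting that $E_i'=\ker\omega_i$ is integrable directly from $d\omega_i=0$ via Cartan's formula, and that $\omega_i$ is basic for the resulting foliation. This is more economical and reaches the same endpoint (local submersions plus the volume Darboux theorem). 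The paper's detour does buy something small: it exhibits integrability of each $E_i$ and of arbitrary sums $\bigoplus_{i\in I}E_i$ in one stroke, which may be of independent interest, whereas your argument only yields the $E_i'$.

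One place where the paper is more explicit than you is the smoothness of the decomposition: it spells out the $k!$-fold covering $GL(km)/SL(m)^k\to GL(km)/(S_k\ltimes SL(m)^k)$ and uses contractibility to choose sections, which makes the handling of the permutation ambiguity transparent. Your ``principal bundle admits smooth local sections'' is correct but compresses this; if you want the $\omega_i$ globally on a non--simply-connected $U$, you should note that the unordered set $\{\omega_1,\dots,\omega_k\}$ is canonical while a global ordering may require passing to a cover.
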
 

\noindent The condition $rank(\omega_i)=m$ guarantees, that $(\omega_i)_p$ is decomposable for all $p$, i.e. a wedge product of one-forms. For the proof we need the following lemma:

\begin{lemma}\label{prodlemma}
	Let $V=\mathbb R^{km}$ where $k\geq 2$ and $m>2$ and $\{e^1,...,e^{km}\}$ dual to the standard basis $\{e_1,...,e_{km}\}$ of $\mathbb R^{km}$. Let $\alpha\in\Lambda^mV^*$ be given by $\omega=e^{1,2,...,m}+e^{m+1,...,2m}+...+e^{(k-1)m+1,...,km}$. Then, up to permutation, the forms $\omega_i=e^{(i-1)m+1,...,im}$ are the unique decomposable forms satisfying $\omega=\sum_{i=1}^n\omega_i$.
\end{lemma}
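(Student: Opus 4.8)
The plan is to argue entirely at the level of the fixed space $V=\mathbb R^{km}$, organizing everything around the \emph{support} of a nonzero $p$-form $\beta$, i.e. the smallest subspace $\mathrm{supp}(\beta)\subseteq V^{*}$ with $\beta\in\Lambda^{p}\mathrm{supp}(\beta)$, equivalently the span of all $\iota_{v_{1}}\cdots\iota_{v_{p-1}}\beta$. I will use freely that $\mathrm{supp}(\beta+\beta')\subseteq\mathrm{supp}(\beta)+\mathrm{supp}(\beta')$, that $\{v\in V\mid\iota_{v}\beta=0\}=\mathrm{Ann}(\mathrm{supp}(\beta))$, and that a nonzero decomposable $\beta=\alpha_{1}\wedge\dots\wedge\alpha_{p}$ has $\mathrm{supp}(\beta)=\mathrm{span}(\alpha_{1},\dots,\alpha_{p})$ of dimension $p$, with $\iota_{v}\beta$ again decomposable.

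\textbf{Key lemma.} First I would establish the following purely linear fact: if $\tau_{1},\dots,\tau_{r}$ with $r\ge 2$ are nonzero decomposable $p$-forms, $p\ge 2$, whose supports lie in direct sum in $V^{*}$, then $\tau=\tau_{1}+\dots+\tau_{r}$ is not decomposable. This comes from a dimension count of $\ker\tau:=\{v\mid\iota_{v}\tau=0\}$: since $\iota_{v}\tau=\sum_{a}\iota_{v}\tau_{a}$ with $\iota_{v}\tau_{a}\in\Lambda^{p-1}\mathrm{supp}(\tau_{a})$ lying in distinct summands of $\bigoplus_{a}\Lambda^{p-1}\mathrm{supp}(\tau_{a})$ (here $p-1\ge 1$ is used), one gets $\ker\tau=\mathrm{Ann}\bigl(\bigoplus_{a}\mathrm{supp}(\tau_{a})\bigr)$, of dimension $\dim V-rp$; were $\tau$ decomposable this would be $\dim V-p$, forcing $r=1$. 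This is exactly where $m>2$ enters, via $p=m-1\ge 2$.

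\textbf{Main argument, in three steps.} (1) Given a decomposition $\omega=\eta_{1}+\dots+\eta_{k}$ into decomposable $m$-forms, I first check $\mathrm{supp}(\omega)=V^{*}$: writing $v=\sum_{i}v_{i}$ along the dual blocks $\tilde V_{i}=\mathrm{span}(e_{(i-1)m+1},\dots,e_{im})$, one has $\iota_{v}\omega=\sum_{i}\iota_{v_{i}}\omega_{i}$ with summands in distinct $\Lambda^{m-1}V_{i}^{*}$, so $\iota_{v}\omega=0$ forces all $v_{i}=0$; hence $\ker\omega=0$ and $\mathrm{supp}(\omega)=V^{*}$. Therefore $V^{*}=\mathrm{supp}(\omega)\subseteq\sum_{j}\mathrm{supp}(\eta_{j})$, a sum of $k$ subspaces of dimension $m$ (all $\eta_{j}\neq 0$, else the dimension drops below $km$), so in fact $V^{*}=\bigoplus_{j}W_{j}$ with $W_{j}:=\mathrm{supp}(\eta_{j})$. (2) Fix $j$ and set $\tilde W_{j}=\mathrm{Ann}\bigl(\bigoplus_{j'\neq j}W_{j'}\bigr)\subseteq V$, of dimension $m$. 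For $w\in\tilde W_{j}$ one has $\iota_{w}\eta_{j'}=0$ for $j'\neq j$, so $\iota_{w}\omega=\iota_{w}\eta_{j}$ is decomposable, while simultaneously $\iota_{w}\omega=\sum_{i}\iota_{w_{i}}\omega_{i}$ is a sum of decomposable $(m-1)$-forms supported in the independent blocks $V_{i}^{*}$. By the key lemma at most one $w_{i}$ is nonzero, so $\tilde W_{j}\subseteq\bigcup_{i}\tilde V_{i}$; since $\mathbb R$ is infinite, a subspace contained in finitely many subspaces lies in one of them, whence $\tilde W_{j}=\tilde V_{i(j)}$ and $W_{j}=V_{i(j)}^{*}$ for some index $i(j)$. (3) From $\bigoplus_{j}V_{i(j)}^{*}=V^{*}=\bigoplus_{i}V_{i}^{*}$ the map $j\mapsto i(j)$ is a bijection, so $\eta_{j}\in\Lambda^{m}V_{i(j)}^{*}=\mathbb R\,\omega_{i(j)}$; writing $\eta_{j}=c_{j}\omega_{i(j)}$ and comparing with $\omega=\sum_{i}\omega_{i}$, linear independence of the $\omega_{i}$ gives $c_{j}=1$, so $\{\eta_{1},\dots,\eta_{k}\}=\{\omega_{1},\dots,\omega_{k}\}$, which is the asserted uniqueness up to permutation.

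\textbf{Main obstacle.} The hard part is the non-decomposability lemma together with the support bookkeeping needed to invoke it; after that the rest is a routine dimension count. One subtlety worth stressing: the tempting shortcut of comparing $\omega^{\wedge k}$ with $\omega_{1}\wedge\dots\wedge\omega_{k}$ is not available, since $\omega^{\wedge k}$ vanishes identically whenever $m$ is odd, which is why the whole argument must be phrased through supports and kernels rather than exterior powers.
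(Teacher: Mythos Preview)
Your proof is correct and follows essentially the same idea as the paper's: both arguments hinge on the observation that contracting $\omega$ with a vector lying in a single block of one decomposition yields a decomposable $(m-1)$-form, which forces that vector to lie in a single block of the other decomposition (you run this from the $\eta_j$-blocks to the $\omega_i$-blocks, the paper runs it the other way and argues by contradiction). Your write-up is in fact more complete, since you prove two points the paper merely asserts---that the supports $W_j$ are forced into direct sum by $\mathrm{supp}(\omega)=V^*$, and the ``key lemma'' that a sum of $\ge 2$ decomposable $(m-1)$-forms with independent supports cannot be decomposable.
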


\begin{proof}
	Let $\{\tilde \omega_i\}$ be an alternative collection of decomposable forms with the above property, which are no permutation of $\{\omega_i\}$. We define the subspaces $\tilde E_i=\{v\in V|~ \iota_v\tilde\omega_j=0~\forall j\neq i\}$. Since we have by construction $V=\bigoplus \tilde E_i$, the projections $\tilde \pi_i:V\to \tilde E_i\subset V$ are well-defined. We can reconstruct $\{\tilde \omega_i\}$ from $\{\tilde E_i\}$ by setting $\tilde \omega_i=\tilde \pi_i^*(\omega|_{\tilde E_i})$. Hence, as $\{\tilde \omega_i\}$ is no permutation of $\{\omega_i\}$, $\{ \tilde E_i\}$ is no permutation of $\{ E_i\}$ (defined correspondingly). I.e, there exists a vector $v\in E_i$, which does not lie in a single $\tilde E_j$. As $v_i\in E_i$,  $\iota_v\omega=\iota_v\omega_i$ is decomposable. However, $\iota_v\omega=\iota_v(\sum\tilde \omega_i)$ has several nonzero summands, i.e. is not decomposable, which yields a contradiction. Hence any collection  $\{\tilde \omega_i\}$ of decomposable forms summing up to $\omega$ is a permutation of $\{\omega_i\}$.
\end{proof}

\begin{proof}[Proof of the Theorem]
	By the preceding lemma we know that forms $\omega_i$ exist pointwise. To prove their smoothness, we begin with showing that the distributions $E_i$, defined in Lemma \ref{prodlemma}, are smooth, i.e. subbundles. Assume $U$ to be open and constractible. Then there is a canonical isomorphism $TU=U\times \mathbb R^{km}$. We consider $\omega$ as a map $U\to \Lambda^m\mathbb (R^{km})^*$. As $\omega$ is of constant linear type, it maps into \[\eta\cdot  GL(\mathbb R^{km})=(e^{1,2,...,m}+e^{m+1,...,2m}+...+e^{(k-1)m+1,...,km})\cdot  GL(\mathbb R^{km})\subset\Lambda^m\mathbb (R^{km})^* .\]
	By the above lemma, the stabilizer of $\eta$ is isomorphic to $S_k\ltimes SL(\mathbb R^d)^k$, where $S_k$ is the permutation group of $k$ elements. We regard the following diagram:
	\[
	\xymatrix{
		&&\frac{GL(\mathbb R^{km})}{ SL(\mathbb R^m)^k}\ar[d]^{\pi_{\sigma}}\ar[r]^{\pi~~~~~ }&\frac{GL(\mathbb R^{km})}{ GL(\mathbb R^m)\times GL(\mathbb R^{(k-1)m})}\ar[r]^\cong&Gr_{m}(\mathbb R^{km})\\
		U \ar@{.>}[rru]^{s_i}   \ar[r]_{\omega~~~~~~}&\eta\cdot  GL(\mathbb R^{km})\ar[r]_{\cong}& \frac{GL(\mathbb R^{km})}{S_k\ltimes SL(\mathbb R^m)^k}&&},
	\]
	where $\pi$ is induced by the inclusion of $SL(\mathbb R^m)^{k-1}$ into $GL(\mathbb R^{(k-1)m})$ and $Gr_m(\mathbb R^{km})$ is the Grassmann manifold of all $m$-dimensional vector subspaces of $\mathbb R^{km}$.
	The map $\pi_\sigma$ is a $k!$-fold covering and $U$ is contractible, so the horizontal map admits $k!$ sections. We choose one section for each orbit of $S_{k-1}$, the stabilizer of $\{1\}$ of the $S_k$-action on $\{1,...,k\}$, acting on $\frac{GL(\mathbb R^{km})}{ SL(\mathbb R^m)^k}$ and denote them as $s_1,...,s_k$. Composed with $\pi$, we get $k$ smooth maps $\pi\circ s_i:U\to Gr_d(\mathbb R^{km})$. By the definition of $Gr_m(\mathbb R^{km})$ they yield $k$ smooth subbundles $E_i$ of $TU$, which correspond pointwise to the $E_i$ of the above lemma. Thus the elements $\omega_i=\omega|_{E_i}$ are smooth.\\
	
	Obviously, if $\omega$ is flat, then the $\omega_i$ are closed. Conversely assume all $\omega_i$ are closed. Then the $(k-1)m-forms$
	$\Omega_i=\omega_1\wedge \omega_2 \wedge ...\widehat{\omega_i}\wedge ...\wedge \omega_k$ are also closed. Consequently the subbundles $E_i=\{v|\iota_v\Omega_i=0\}$ are involutive and hence integrable.  Also, for any $I\subset \{1,...,k\}$ the sums $\bigoplus_{i\in I} E_i$ are integrable by the same argument. Especially $E_{i}'=\bigoplus_{j\neq i} E_j$ is integrable. Thus for any $p\in U$ there exist open sets $U_i\subset M$ containing $p$ and submersions $\phi_i:U_i\to \phi_i(U_i)\subset_{\text{open}}\mathbb R^m$, satisfying $\ker(D\phi_i)=E_{i}'|_{U_i}$. Then automatically $D\phi_i|_{E_i}:E_i|_{U_i}\to T\mathbb R^m$ is injective and thus there exists an open neighbourhood $V\subset \bigcap U_i$ of $p$ on which $\Phi=(\phi_1,...,\phi_k):V\to (\mathbb R^m)^k$ is a diffeomorphism onto its image, i.e. a chart. We know that the pullbacks $(\Phi^{-1})^*\omega_i$ are closed and of the form 
	\[f_idx^{(i-1)m+1}\wedge ...\wedge dx^{im},\]
	so $f_i$ only depends on $x^{(i-1)m+1},...,x^{im}$. The theorem then follows from applying the Darboux theorem for volume forms to the $(\Phi^{-1})^*\omega_i$.
	(For a similar statement proven differently cf. also \cite{MR3146582}.)
\end{proof}

\subsection{$(m{-}1)$-plectic complex $m$-manifolds}\label{sub:new2}

We consider here, for $m>2$, (2m)-dimensional real manifolds with a $(m{-}1)-plectic$ structure having as (constant) linear type the real part of a complex volume form, and show that such multisymplectic manifolds are flat if and only if a certain associated almost-complex structure is integrable.

\begin{theorem}\label{clxthm}
	Let $m> 2$ and $U\subset \mathbb R^{2m}$ be open and $\omega\in\Omega^m_{cl}(U)$ be of linear type $\mathbb Re((dx^1+idx^2)\wedge ...\wedge(dx^{2m-1}+idx^{2m}) )$, where $m>2$. Then, up to sign, there is a unique almost-complex structure $J$ such that the following equality holds for all $p\in U$ and $v,w\in T_pU$: 
	\begin{align}\label{clx-eq}
		\iota_{J(w)}\iota_v\omega=\iota_{w}\iota_{J(v)}\omega
	\end{align}
	
	\noindent Furthermore, $(U,\omega)$ is flat if and only if $J$ is integrable.
\end{theorem}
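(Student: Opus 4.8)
The plan is to follow the template of the proof of Theorem \ref{prodthm}: first the pointwise linear algebra, giving existence and uniqueness up to sign of $J$; then the promotion of $J$ to a smooth tensor field using constancy of the linear type; then the two directions of the flatness equivalence. The starting point is that \eqref{clx-eq} is a \emph{bidegree condition}. For any almost-complex structure $J$ on a $2m$-dimensional real vector space $V$, decompose $\omega\otimes\mathbb C$ into its $J$-bidegree components $\omega^{p,q}\in\Lambda^{p,q}_J$; evaluating \eqref{clx-eq} on vectors in the $\pm i$-eigenspaces $V^{1,0}_J, V^{0,1}_J$ of $J$ shows that \eqref{clx-eq} holds if and only if all mixed components vanish, i.e.\ $\omega\otimes\mathbb C\in\Lambda^{m,0}_J\oplus\Lambda^{0,m}_J$; equivalently, $\omega=\mathbb{Re}(\zeta)$ for a decomposable (and, by non-degeneracy, nowhere-zero) $(m,0)$-form $\zeta$. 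The hypothesis on the linear type says precisely that $J_0$ has this property, with $\zeta_0=(e^1+ie^2)\wedge\dots\wedge(e^{2m-1}+ie^{2m})$, so $J_0$ works; this is existence. For uniqueness up to sign, a decomposable $(m,0)$-form recovers $V^{0,1}_J$ as its kernel, hence recovers $J$; so if another $J$ satisfies \eqref{clx-eq}, with associated $\zeta$, then $\omega\otimes\mathbb C=\zeta+\bar\zeta=\zeta_0+\bar\zeta_0$ exhibits $\omega\otimes\mathbb C$ in two ways as a sum of two decomposable $m$-forms with complementary kernels, and the $\mathbb C$-linear version of Lemma \ref{prodlemma} (with $k=2$) forces $\{\zeta,\bar\zeta\}=\{\zeta_0,\bar\zeta_0\}$, whence $J\in\{J_0,-J_0\}$. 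This is the step that genuinely needs $m>2$: for $m=2$, $\mathbb{Re}(\zeta_0)$ is a symplectic $2$-form, with a whole family of compatible complex structures.

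\textbf{Smoothness of $J$.} This runs parallel to the corresponding part of the proof of Theorem \ref{prodthm}. Shrinking $U$ to a contractible open set and trivialising $TU\cong U\times\mathbb R^{2m}$, constancy of the linear type means $\omega$ is a smooth map into the $GL(2m,\mathbb R)$-orbit of $\eta:=\mathbb{Re}(\zeta_0)$. Using the pointwise rigidity above, the stabiliser of $\eta$ in $GL(2m,\mathbb R)$ is $SL(m,\mathbb C)\rtimes\mathbb Z/2$, the $\mathbb Z/2$ generated by an anti-holomorphic involution interchanging $\zeta_0$ and $\bar\zeta_0$. Hence $\omega$ induces a smooth map $U\to GL(2m,\mathbb R)/\bigl(SL(m,\mathbb C)\rtimes\mathbb Z/2\bigr)$, which — $U$ being contractible — lifts along the double cover $GL(2m,\mathbb R)/SL(m,\mathbb C)$; composing with the projection to $GL(2m,\mathbb R)/GL(m,\mathbb C)$, the manifold of complex structures on $\mathbb R^{2m}$, yields a smooth almost-complex structure $J$ realising \eqref{clx-eq} pointwise, the two possible lifts corresponding to $J$ and $-J$.

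\textbf{The flatness equivalence.} If $(U,\omega)$ is flat near $p$, a flat chart makes $\omega$ have constant coefficients, which after a further linear change are those of $\mathbb{Re}(\zeta_0)$; by the uniqueness step the associated $J$ is $\pm J_0$ on that chart, hence integrable. As integrability is local and insensitive to the sign of $J$, $J$ is integrable throughout $U$. Conversely, assume $J$ integrable and fix $p\in U$. By the Newlander--Nirenberg theorem there are holomorphic coordinates near $p$; let $\zeta$ be the $(m,0)$-component of $\omega\otimes\mathbb C$, normalised so that $\omega=\mathbb{Re}(\zeta)$, $\omega\otimes\mathbb C$ having no components in other bidegrees. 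Decomposing $0=d\omega$ by $J$-bidegree, and using $\partial\zeta\in\Lambda^{m+1,0}=0$ and $\bar\partial\bar\zeta\in\Lambda^{0,m+1}=0$, we get $\bar\partial\zeta=-\partial\bar\zeta$ with the two sides in the distinct bidegrees $(m,1)$ and $(1,m)$ (as $m>1$), so $\bar\partial\zeta=0$: $\zeta$ is a holomorphic $m$-form, hence closed. Writing $\zeta=f\,dw^1\wedge\dots\wedge dw^m$ in holomorphic coordinates, with $f$ holomorphic and nowhere zero, and replacing $w^1$ by the holomorphic primitive $z^1:=\int_{w^1_0}^{w^1}f(s,w^2,\dots,w^m)\,ds$, $z^j:=w^j$ for $j\geq 2$, turns $\zeta$ into $dz^1\wedge\dots\wedge dz^m$; writing $z^j=x^{2j-1}+ix^{2j}$, the real chart $(x^1,\dots,x^{2m})$ identifies $\omega=\mathbb{Re}(\zeta)$ with the constant-coefficient form $\mathbb{Re}\bigl((dx^1+idx^2)\wedge\dots\wedge(dx^{2m-1}+idx^{2m})\bigr)=\omega_p$. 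So $(U,\omega)$ is flat.

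\textbf{Main obstacle.} The technical heart is the pointwise step — converting \eqref{clx-eq} into the purity statement $\omega\otimes\mathbb C\in\Lambda^{m,0}_J\oplus\Lambda^{0,m}_J$ and extracting uniqueness up to sign from the rigidity of two-term decompositions into decomposable forms (this is exactly where $m>2$ enters) — together with the identification of the structure group $SL(m,\mathbb C)\rtimes\mathbb Z/2$ underlying the smoothness argument. Once these are in place, ``flat $\Rightarrow$ $J$ integrable'' is immediate, and ``$J$ integrable $\Rightarrow$ flat'' reduces to Newlander--Nirenberg followed by a one-variable holomorphic antiderivative, i.e.\ a holomorphic Darboux theorem for nowhere-zero holomorphic volume forms.
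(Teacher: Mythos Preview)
Your proof is correct, and your overall architecture matches the paper's: pointwise linear algebra for existence/uniqueness of $J$, smoothness from constancy of linear type, then the two flatness directions via Newlander--Nirenberg and a holomorphic Darboux theorem. The genuinely different ingredient is the pointwise uniqueness step. The paper proves a lemma (from \cite{MR2462806}) stating that for $m>2$ the full algebra
\[
\mathcal A_{\omega^{\mathbb R}}=\{A\in \mathrm{End}_{\mathbb R}(V)\mid \iota_{A(w)}\iota_v\omega^{\mathbb R}=\iota_w\iota_{A(v)}\omega^{\mathbb R}\}
\]
equals $\mathbb R\cdot\mathrm{id}\oplus\mathbb R\cdot J$; the only complex structures in this two-plane are $\pm J$, and smoothness of $J$ is read off directly from smoothness of this rank-two subbundle of $\mathrm{End}(TU)$, without the orbit/covering argument. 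Your route instead reinterprets \eqref{clx-eq} as the purity condition $\omega\otimes\mathbb C\in\Lambda^{m,0}_J\oplus\Lambda^{0,m}_J$ and then invokes the complexified $k=2$ case of Lemma~\ref{prodlemma} to pin down the decomposition $\zeta_0+\bar\zeta_0$ up to swap. This is a perfectly good alternative and has the advantage of recycling machinery already developed for Theorem~\ref{prodthm}, as well as making the stabiliser $SL(m,\mathbb C)\rtimes\mathbb Z/2$ transparent; the paper's lemma, on the other hand, gives strictly more information (the whole algebra $\mathcal A_{\omega^{\mathbb R}}$, not just its complex structures) and yields smoothness of $J$ in one line. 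For the converse direction you spell out explicitly what the paper packages as ``the holomorphic Darboux--Moser theorem for volume forms'': the bidegree argument giving $\bar\partial\zeta=0$ and the one-variable holomorphic primitive normalising $\zeta$ to $dz^1\wedge\dots\wedge dz^m$.
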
 

 \noindent For the proof we need the following lemma from \cite{MR2462806}:

 \begin{lemma}
 Let $m>2$ and $J$ a linear complex structure on the $2m$-dimensional real vector space $V$. Let $\omega=\omega^\mathbb R+i\omega^{\mathbb I}\in \Lambda^{m,0}V^*$ be non-zero. Then 
 \[
 \mathcal A_{\omega^ \mathbb R}=\{A\in End_{\mathbb R}(V)| \iota_{A(w)}\iota_v\omega^\mathbb R=\iota_{w}\iota_{A(v)}\omega^\mathbb R\}=\mathbb R \cdot \text{id} \oplus \mathbb R\cdot J
 \]
 \end{lemma}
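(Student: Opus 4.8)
The plan is to treat this as a pure linear-algebra statement and to diagonalize the defining condition by passing to the complexification $V_{\mathbb{C}}=V\otimes_{\mathbb{R}}\mathbb{C}=V^{1,0}\oplus V^{0,1}$, where $V^{1,0}$ (resp. $V^{0,1}$) is the $+i$- (resp. $-i$-) eigenspace of $J$. First I would record the two easy inclusions: $\mathrm{id}$ clearly lies in $\mathcal A_{\omega^{\mathbb R}}$, and $J$ does too. Indeed, since $\omega$ is of type $(m,0)$ one has $\iota_{J(u)}\omega=i\,\iota_u\omega$ for every $u$, and the same relation holds for the $(m{-}1,0)$-form $\iota_v\omega$; hence $\iota_{J(w)}\iota_v\omega=i\,\iota_w\iota_v\omega=\iota_w\iota_{J(v)}\omega$, and taking real parts gives $J\in\mathcal A_{\omega^{\mathbb R}}$. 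As $J$ is not a real multiple of $\mathrm{id}$, this already shows $\mathbb R\cdot\mathrm{id}\oplus\mathbb R\cdot J\subseteq\mathcal A_{\omega^{\mathbb R}}$ with the right-hand space genuinely $2$-dimensional, so all the content is the reverse inclusion $\dim_{\mathbb R}\mathcal A_{\omega^{\mathbb R}}\leq 2$.

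For that I would complexify everything. The relation $\iota_{A(w)}\iota_v\omega^{\mathbb R}=\iota_w\iota_{A(v)}\omega^{\mathbb R}$ is $\mathbb R$-bilinear and, using $\iota_u\iota_{u'}=-\iota_{u'}\iota_u$, symmetric in $(v,w)$; it therefore extends $\mathbb C$-bilinearly to $V_{\mathbb C}$ and it suffices to test it on arguments drawn from $V^{1,0}$ and $V^{0,1}$. Writing $\omega^{\mathbb R}=\tfrac12(\omega+\bar\omega)$ with $\omega\in\Lambda^{m,0}$, $\bar\omega\in\Lambda^{0,m}$, and decomposing the complexified $A_{\mathbb C}$ into blocks $P\colon V^{1,0}\!\to V^{1,0}$, $R\colon V^{1,0}\!\to V^{0,1}$, $Q\colon V^{0,1}\!\to V^{1,0}$, $S\colon V^{0,1}\!\to V^{0,1}$ (reality of $A$ forcing $S=\bar P$ and $Q=\bar R$), the bidegree bookkeeping does the work. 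For $v,w\in V^{1,0}$ only the $\omega$-part survives and the condition collapses to $\iota_{P(w)}\iota_v\omega=\iota_w\iota_{P(v)}\omega$, i.e.\ the same symmetry condition now for $P$ on the complex $m$-dimensional space $V^{1,0}$ with volume form $\omega|_{V^{1,0}}$. For $v\in V^{1,0}$, $w\in V^{0,1}$ the $\omega$- and $\bar\omega$-parts land in $\Lambda^{m-2,0}$ and $\Lambda^{0,m-2}$ respectively, which are distinct summands of $\Lambda^{m-2}V_{\mathbb C}^{*}$ exactly because $m>2$; hence both vanish separately, giving $\iota_{Q(w)}\iota_v\omega=0$ and $\iota_w\iota_{R(v)}\bar\omega=0$ for all such $v,w$.

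The key sub-step, and the place where $m\geq 3$ is genuinely used, is that an endomorphism $P$ of an $m$-dimensional space $W$ with volume form $\mathrm{vol}$ satisfying $\iota_{P(w)}\iota_v\mathrm{vol}=\iota_w\iota_{P(v)}\mathrm{vol}$ is necessarily scalar. I would prove this by fixing a basis $e_1,\dots,e_m$ with $\mathrm{vol}=e^1\wedge\cdots\wedge e^m$ and comparing, for $a\neq b$, the coefficients on the linearly independent $(m{-}2)$-forms obtained by deleting two basis covectors: matching the pair $\{a,c\}$ with $c\notin\{a,b\}$ (which exists since $m\geq 3$) kills the off-diagonal entry $P_{cb}$, so $P$ is diagonal, after which equating the coefficient on the pair $\{a,b\}$ forces all diagonal entries to agree. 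Applied to $\omega|_{V^{1,0}}$ this yields $P=\lambda\,\mathrm{id}$ for some $\lambda\in\mathbb C$, and the nondegeneracy of the volume form turns $\iota_{Q(w)}\iota_v\omega=0$ (and its conjugate) into $Q=0$, $R=0$.

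It then remains to reassemble: by reality $S=\bar P=\bar\lambda\,\mathrm{id}$, so $A_{\mathbb C}$ acts as $\lambda$ on $V^{1,0}$ and $\bar\lambda$ on $V^{0,1}$. Writing $\lambda=\alpha+i\beta$ with $\alpha,\beta\in\mathbb R$ and recalling that $J$ acts as $+i$ on $V^{1,0}$ and $-i$ on $V^{0,1}$, the real endomorphism $\alpha\,\mathrm{id}+\beta J$ has exactly this complexification; since a real endomorphism is determined by its complexification, $A=\alpha\,\mathrm{id}+\beta J\in\mathbb R\cdot\mathrm{id}\oplus\mathbb R\cdot J$. Combined with the first paragraph this gives the asserted equality. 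The main obstacle is the scalar sub-lemma for $P$; the rest is bidegree bookkeeping, and it is precisely there and in separating the $(m{-}2,0)$ and $(0,m{-}2)$ components that the hypothesis $m>2$ is indispensable.
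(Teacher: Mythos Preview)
Your argument is correct, but it follows a different route from the paper's. The paper stays in the real picture and argues intrinsically: it first shows that any two elements of $\mathcal A_{\omega^{\mathbb R}}$ commute (using a third slot, hence $m\ge 3$), so in particular every $A$ commutes with $J$ and is $\mathbb C$-linear; then it observes $\omega^{\mathbb R}(Av,v,x,\ldots)=\omega^{\mathbb R}(v,v,Ax,\ldots)=0$, upgrades this to $\iota_v\iota_{Av}\omega=0$ by moving $J$ to the spare slot, and concludes that every $v$ is a $\mathbb C$-eigenvector of $A$ because $\omega$ is a complex volume; finally all eigenvalues coincide by the same symmetry. You instead complexify, split $A_{\mathbb C}$ into $(1,0)/(0,1)$ blocks, and use bidegree bookkeeping: the mixed test $v\in V^{1,0}$, $w\in V^{0,1}$ lands in $\Lambda^{m-2,0}\oplus\Lambda^{0,m-2}$ and forces the off-diagonal blocks $Q,R$ to vanish, while the $(1,0)$ test reduces to a clean ``volume-form'' sub-lemma showing $P$ is scalar. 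Your approach is more hands-on and makes the role of $m>2$ very explicit in two separate places (separating $\Lambda^{m-2,0}$ from $\Lambda^{0,m-2}$, and having a third basis vector in the scalar sub-lemma); the paper's proof is shorter and coordinate-free but uses the extra slot more implicitly. Both are complete; the paper's is slicker, yours is perhaps more transparent about where each hypothesis enters.
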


\begin{proof}
	The ``$\supset$''-inclusion is clear. For the other inclusion we first observe, that the elements of $\mathcal A_{\omega^ \mathbb R}$ commute:
	\[
	\omega^ \mathbb R(ABv,w,x, \ldots)=\omega^ \mathbb R(v,Aw,Bx, \ldots)=\omega^ \mathbb R(BAv,w,x, \ldots).
	\]
	Especially $\mathcal A_{\omega^ \mathbb R}\subset End_{\mathbb C}(V)$, as every element has to commute with $J$. Moreover,  any element $A\in \mathcal A_{\omega^ \mathbb R}$ has to be diagonal as a complex matrix. To see that, we observe that $A(v)$ is always $\mathbb C$-linearly dependent on $v$. We have
	\[
	\omega^ \mathbb R(v,Av,x, \ldots)=\omega^ \mathbb R(v,v, Ax, \ldots)=0,
	\]
	so $\iota_{v}\iota_{A(v)}\omega^ \mathbb R=0$ for all $v$. As $\omega^ \mathbb R$ is at least a 3-form, this implies $\iota_{v}\iota_{A(v)}\omega=0$ for all $v$. Now $\omega$ is a complex volume, so $v$ is a complex eigenvector of $A$. In remains to show, that all eigenvalues are equal, but this follows from
	\[
	\lambda_1 \cdot 	\omega^ \mathbb R(v_1,v_2, \ldots)=\omega^ \mathbb R(Av_1,v_2, \ldots)=\omega^ \mathbb R(v_1,Av_2, \ldots)=	\lambda_2 \cdot 	\omega^ \mathbb R(v_1,v_2, \ldots),
	\] 
	again using the fact, that $\omega^ \mathbb R$ is the real part of a complex volume form.
\end{proof}

\begin{proof}[Proof of Theorem \ref{clxthm}]
	 At any point we choose $J_p$ to be the unique almost-complex structure compatible with the standard orientation on $U$ and satisfying \eqref{clx-eq}, existing by the above lemma. The smoothness of $\omega$ assures that the almost-complex structure $J$ varies smoothly. If $(U,\omega)$ is flat, then with respect to some chart $J$ has constant coefficients, i.e. is integrable.
	On the other hand if $J$ is integrable, then we can extend $\omega$ to an element $\omega^{\mathbb C}$ of $\Omega^{m,0}(U)$, by $\iota_v\omega^{\mathbb C}=\iota_v\omega - i \cdot \iota_{J(v)}\omega$. By the integrability of $J$, this form is still closed, i.e. a holomorphic volume form (of the complex manifold $(U,J)$). By the holomorphic version of the Darboux-Moser Theorem for volume forms, there exist holomorphic local coordinates $(z_1,...,z_n)$, such that $\omega^{\mathbb C}$ (and hence $\omega=\mathbb Re(\omega^{\mathbb C})$) has constant coefficients.
\end{proof}

\begin{remark}
	For odd $m$, the almost-complex structures defined by Equation \eqref{clx-eq}, could also be described by the equation
	\[
	(\iota_v\omega^\mathbb R) \wedge \omega^\mathbb R=\pm \iota_{J(v)}\left(\frac{1}{2}\omega^\mathbb R\wedge \omega^{\mathbb I}\right),
	\]
	as has been done for the $m=3$ case in, for example, \cite{MR2253159,MR1863733}.
\end{remark}

\subsection{2-plectic 6-manifolds}\label{subs:2pl6}
We construct here new 2-plectic structures on $\mathbb R^6$ that do not have constant linear type respectively are not flat despite having constant linear type, showing that flatness of multisymplectic manifolds is a subtle issue.

For non-degenerate three-forms in dimension six there are three linearly inequivalent normal forms. We will recall their flatness conditions, as described in \cite{126197}. The different cases were  presented in \cite{ MR2069397,MR2437342,MR1859316,MR962194,MR0238225}.

\begin{theorem}\label{thm:2pl6}
	Let $U\subset \mathbb R^6$ be open and $\omega\in \Omega^3_{cl}(U)$ (possibly degenerate). Choose any volume form $\Omega\in \Omega^6(U)$. There is a unique $J\in \Gamma(U,End(TU))=C^\infty(U,\mathbb R^{6\times 6})$ satisfying $(\iota_v\omega)\wedge \omega=\iota_{J(v)}\Omega$ for all $v\in TU$.
	Then we have:
	\begin{enumerate}[(i)]
		\item If $trace(J(p)^2)>0$, then $\omega_p$ has the linear type of $e^1\wedge e^2 \wedge e^3+e^4\wedge e^5\wedge e^6$. If this is the case on an open subset $V\subset U$, then $\omega|_V=\omega_1+\omega_2$ for decomposable forms $\omega_1,\omega_2\in \Omega^3(V)$, unique up to order. In such cases $(V,\omega|_V)$ is flat, if and only if $d\omega_1=d\omega_2=0$. 
		\item If $trace(J(p)^2)<0$, then $\omega_p$ has the linear type of $e^1\wedge e^3 \wedge e^5-e^1\wedge e^4 \wedge e^6- e^2\wedge e^3 \wedge e^6-e^2\wedge e^4 \wedge e^5$. If this is the case on an open subset $V\subset U$, then $\tilde J=\sqrt{\frac{-6}{trace(J^2)}}\cdot J$ defines an almost-complex structure.\\
		In such cases $(V,\omega|_V)$ is flat, if and only if $\tilde J$ is an integrable almost-complex structure. 
		\item If $trace(J(p)^2)=0$ and $\omega_p$ is non-degenerate, then $\omega_p$ has the linear type of $e^1\wedge e^5\wedge e^6 -e^2\wedge e^4\wedge e^6+e^3\wedge e^4\wedge e^5$. If this is the case on an open subset $V\subset U$, then $E=ker(J)\subset TV$ yields a distribution.\\
		In such cases $(V,\omega|_V)$ is flat, if and only if $E$ is an integrable distribution. 
	\end{enumerate}
\end{theorem}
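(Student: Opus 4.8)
The plan is to build the auxiliary endomorphism $J$ first, reduce the pointwise classification to the known orbit structure of three-forms in dimension six, and then feed each of the three cases into one of the normal-form theorems already established, namely Theorem \ref{prodthm}, Theorem \ref{clxthm} and Theorem \ref{multithm}.

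\emph{Construction of $J$.} Since $\Omega$ is a volume form, the contraction $\iota_\bullet\Omega\colon TU\to\Lambda^5T^*U$ is a vector bundle isomorphism; composing its inverse with the smooth bundle map $v\mapsto(\iota_v\omega)\wedge\omega$ yields the unique $J\in\Gamma(U,\mathrm{End}(TU))$ with $(\iota_v\omega)\wedge\omega=\iota_{J(v)}\Omega$, and $J$ is automatically smooth. Replacing $\Omega$ by $g\Omega$ with $g$ nowhere vanishing replaces $J$ by $g^{-1}J$, so the sign of $\mathrm{trace}(J_p^2)$, its zero set, the kernel $\ker J_p$, and the rescaled operator $\tilde J$ of (ii) are all independent of the choice of $\Omega$; this $\mathrm{trace}(J_p^2)$ is (up to a positive factor) the classical Hitchin invariant $\lambda(\omega_p)$.

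\emph{Pointwise classification and reduction.} Next I would invoke the classical fact, settled in the sources cited before the statement (e.g.\ in the references around \cite{126197}), that a non-degenerate three-form on a six-dimensional real vector space lies in exactly one of the orbits of $e^{123}+e^{456}$, $e^{135}-e^{146}-e^{236}-e^{245}$, $e^{156}-e^{246}+e^{345}$, and that these orbits are characterised respectively by $\mathrm{trace}(J^2)>0$, $\mathrm{trace}(J^2)<0$, and ($\mathrm{trace}(J^2)=0$ together with non-degeneracy of the form). Hence on each of the open subsets $V$ in (i)--(iii) the form $\omega$ is non-degenerate and of constant linear type, which is exactly the hypothesis needed by the relevant earlier theorem. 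Case (i): $e^{123}+e^{456}$ is the product linear type of Theorem \ref{prodthm} with $k=2$, $m=3$, which directly gives the smooth decomposition $\omega|_V=\omega_1+\omega_2$ into decomposable forms of rank $3$, unique up to order, and the equivalence of flatness with $d\omega_1=d\omega_2=0$. Case (ii): expanding $(dx^1+i\,dx^2)\wedge(dx^3+i\,dx^4)\wedge(dx^5+i\,dx^6)$ identifies $e^{135}-e^{146}-e^{236}-e^{245}$ with its real part, so Theorem \ref{clxthm} (with $m=3$) applies, producing an almost-complex structure whose integrability is equivalent to flatness; it remains to see that this almost-complex structure coincides (up to sign) with $\tilde J$. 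This follows from the remark after Theorem \ref{clxthm}: that remark says the structure of Theorem \ref{clxthm} satisfies $(\iota_v\omega^{\mathbb R})\wedge\omega^{\mathbb R}=\pm\,\iota_{J(v)}\bigl(\tfrac12\omega^{\mathbb R}\wedge\omega^{\mathbb I}\bigr)$, so writing $\tfrac12\omega^{\mathbb R}\wedge\omega^{\mathbb I}=\lambda\Omega$ (with $\lambda$ nowhere zero on $V$, since $\mathrm{trace}(J^2)<0$ there) one gets $J=\pm\lambda^{-1}J$ for our endomorphism, hence $\mathrm{trace}(J^2)=-6\lambda^2$ and $\tilde J=\pm J$. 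Case (iii): $e^{156}-e^{246}+e^{345}$ is a standard three-form in the sense of Subsection \ref{sub:martin}, with $W_\omega=\mathrm{span}(e_1,e_2,e_3)$; a short computation on the model shows that $J$ annihilates exactly this $W_\omega$ (and that $J^2=0$, so indeed $\mathrm{trace}(J^2)=0$), so $E=\ker J=\mathcal W_\omega$, and Theorem \ref{multithm} gives flatness if and only if $E$ is integrable.

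\emph{Main obstacle.} The delicate part is not the orbit classification, which can be quoted, but the bookkeeping that identifies our single endomorphism $J$ with the three different structures appearing in the earlier theorems --- the decomposition-defining bundles $E_i$ of Theorem \ref{prodthm}, the almost-complex structure of Theorem \ref{clxthm}, and the distribution $\mathcal W_\omega$ of Theorem \ref{multithm} --- together with keeping track of the normalising scalar $\sqrt{-6/\mathrm{trace}(J^2)}$ in case (ii) and of orientation/sign conventions throughout. Once that dictionary is in place, the three flatness criteria are immediate corollaries of the respective theorems.
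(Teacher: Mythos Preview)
Your proposal is correct and follows exactly the route the paper takes: cite the linear (pointwise) classification and then reduce cases (i), (ii), (iii) to Theorems \ref{prodthm}, \ref{clxthm}, and \ref{multithm} respectively. In fact you supply considerably more detail than the paper's two-line proof --- in particular the explicit identifications of $\tilde J$ with the structure of Theorem \ref{clxthm} via the remark following it, and of $\ker J$ with $\mathcal W_\omega$ in case (iii), are left implicit in the paper but spelled out by you.
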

\begin{proof}
	The linear statements are proven in \cite{MR2253159}. The three cases case can be reduced to special cases of Theorems \ref{prodthm}, \ref{clxthm} and \ref{multithm}.
\end{proof}

We will use the above theorems to construct 2-plectic 6-manifolds not having constant linear type or flatness properties. Similar constructions have been investigated in \cite{MR2069397,MR2437342} and other examples arise in the theory of special holonomy cf. (\cite{MR1939543,MR916718}). We will construct our examples using the following lemma (compare also the preprint \cite{1608.07424}).

\begin{lemma}\label{lem:2pl}
	Let $M=\mathbb R^6$ and 
	\[
	\omega=\omega^f= dx^{135} -dx^{146}-dx^{236} + f(x)\cdot dx^{245}\in \Omega^3(M),\]
	where $f:\mathbb R^6\to \mathbb R$ only depends on $x^2,x^4$ and $x^5$. Then $(M,\omega)$ is a multisymplectic manifold. Furthermore, $\omega_x$ is of linear type $(i)$ when $f(x)>0$, $(ii)$ when $f(x)<0$ and $(iii)$ when $f(x)=0$, using the numbering from the above theorem.
\end{lemma}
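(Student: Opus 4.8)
The plan is to establish, in order, that $\omega^f$ is closed, that it is non-degenerate at every point (so $(M,\omega^f)$ is $2$-plectic), and then to read off the linear types from the sign of the scalar invariant $\mathrm{trace}(J^2)$ occurring in Theorem \ref{thm:2pl6}. Closedness is immediate: the summands $dx^{135},dx^{146},dx^{236}$ have constant coefficients, so $d\omega^f=d(f\cdot dx^{245})=df\wedge dx^2\wedge dx^4\wedge dx^5$, and since $f$ depends only on $x^2,x^4,x^5$ the one-form $df$ is a combination of $dx^2,dx^4,dx^5$, whence every resulting summand has a repeated factor and $d\omega^f=0$. (This is precisely why the hypothesis on $f$ is imposed: a dependence of $f$ on any of $x^1,x^3,x^6$ would destroy closedness.)

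For non-degeneracy I would contract $\omega^f$ with a general tangent vector $v=\sum_{i=1}^6 a_i\,\partial_i$ and expand $\iota_v\omega^f$ in the basis $\{dx^i\wedge dx^j\}_{i<j}$. One checks that each coordinate $a_i$ appears, up to sign and with no factor of $f$, as the coefficient of at least one basis $2$-form: $a_1$ against $dx^{35}$, $a_2$ against $dx^{36}$, $a_3$ against $dx^{15}$, $a_4$ against $dx^{16}$, $a_5$ against $dx^{13}$, $a_6$ against $dx^{14}$. Hence $\iota_v\omega^f=0$ forces $v=0$ no matter what $f$ is, and together with closedness this proves $(M,\omega^f)$ multisymplectic.

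For the linear types I would apply Theorem \ref{thm:2pl6} with the volume form $\Omega=dx^{123456}$ (the sign of $\mathrm{trace}(J^2)$ not depending on this choice) and compute the endomorphism field $J$ defined by $(\iota_v\omega^f)\wedge\omega^f=\iota_{J(v)}\Omega$. Evaluating on the coordinate vector fields — a short computation, since each monomial of $\omega^f$ contains exactly one $dx^i$ from each of the pairs $\{1,2\},\{3,4\},\{5,6\}$ — gives $J(\partial_1)=-2\partial_2$, $J(\partial_2)=-2f\partial_1$, $J(\partial_3)=-2\partial_4$, $J(\partial_4)=-2f\partial_3$, $J(\partial_5)=2f\partial_6$, $J(\partial_6)=2\partial_5$. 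Thus $J$ is block diagonal for $\mathrm{span}(\partial_1,\partial_2)\oplus\mathrm{span}(\partial_3,\partial_4)\oplus\mathrm{span}(\partial_5,\partial_6)$ and $J^2=4f\cdot\mathrm{id}$, so $\mathrm{trace}(J(x)^2)=24f(x)$. The trichotomy of Theorem \ref{thm:2pl6} then applies pointwise: $f(x)>0$ gives $\mathrm{trace}(J(x)^2)>0$, hence type $(i)$; $f(x)<0$ gives $\mathrm{trace}(J(x)^2)<0$, hence type $(ii)$; and $f(x)=0$ gives $\mathrm{trace}(J(x)^2)=0$ with $\omega_x$ non-degenerate by the previous step, hence type $(iii)$.

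All of this is elementary linear algebra with coordinate forms in six variables; the only genuine labor is sign bookkeeping when reordering basis $2$- and $5$-vectors, both in the contraction $\iota_v\omega^f$ and in the $5$-forms $(\iota_{\partial_i}\omega^f)\wedge\omega^f$. I expect the block structure of $J$ — a reflection of the fact that $\omega^f$ only pairs the coordinate blocks $\{1,2\},\{3,4\},\{5,6\}$ — to be the observation that keeps the computation of $\mathrm{trace}(J^2)$ short, and the identity $J^2=4f\cdot\mathrm{id}$ to be the clean form in which the answer falls out.
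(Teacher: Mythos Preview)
Your proof is correct and follows essentially the same approach as the paper: compute the endomorphism $J$ of Theorem~\ref{thm:2pl6} with respect to the standard volume, observe that $J^2=4f\cdot\mathrm{id}$, and read off the linear type from the sign of $\mathrm{trace}(J^2)=24f$. You are more explicit than the paper about closedness and non-degeneracy (the paper simply notes that $dx^{135}-dx^{146}-dx^{236}$ is already non-degenerate), but the substance is identical.
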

\begin{proof}
	The proof is a simple consequence of the above theorem, by explicit calculation of $J$ and noticing that $dx^{135} -dx^{146}-dx^{236}$ is non-degenerate. With respect to the standard volume on $\mathbb R^6=T_x\mathbb R^6$, we obtain that 
	\[
	J(x)=\begin{pmatrix}
	0&-2f(x) &0&0&0&0\\
	-2&0&0&0&0&0\\
	0&0&0&-2f(x) &0&0\\
	0&0&-2&0&0&0\\
	0&0&0&0&0&2\\
	0&0&0&0&2f(x)&0\\
	\end{pmatrix}.
	\]
	Squaring and taking the trace completes the proof.
\end{proof}

\begin{example}[Non-constant linear type]\label{ex-r6-linear-vary}
	We set $f=x^2$. Then in any neighbourhood of $0\in M$, there exist points where $f$ is positive and points, where $f$ is negative. Hence $(M,\omega^f)$ does not have constant linear type around 0. Consequently, it can not satisfy the Darboux property at 0. 
\end{example}

\begin{remark}
	An example of non-constant linear type for non-degenerate four-forms in dimension six can be given as follows. Let $M= \mathbb R^6$ and $\omega=dx^{1234}+dx^{1256} +x^3 dx^{3456}$. At $x^3=0$ the type changes.
\end{remark}

\begin{example}[Constant linear type but non-flat]\label{nflat}
	We regard the multisymplectic submanifold $M^{>0}=\{x\in\mathbb R^6|~x^2>0\} \subset M$ from Example \ref{ex-r6-linear-vary}. We set
	
	\begin{minipage}{0.4\textwidth}
		\begin{align*}
		&\alpha_1=(\sqrt {x^2} dx^2-dx^1)\\
		&\alpha_2=(\sqrt {x^2} dx^4-dx^3)\\
		&\alpha_3=(\sqrt {x^2} dx^5+dx^6)\\
		\end{align*}
	\end{minipage}
	\begin{minipage}{0.35\textwidth}
		\begin{align*}
		&\alpha_4=(\sqrt {x^2} dx^2+dx^1)\\
		&\alpha_5=(\sqrt {x^2} dx^4+dx^3)\\
		&\alpha_6=(\sqrt {x^2} dx^5-dx^6)\\
		\end{align*}
	\end{minipage}
	
	and
	\[
	\omega_1= \frac{1}{(2\sqrt{x^2})} \alpha_1\wedge \alpha_2\wedge \alpha_3\]\[
	\omega_2=\frac{1}{(2\sqrt{x^2})}\alpha_4\wedge \alpha_5\wedge \alpha_6.
	\]
	
	It follows that
	\[
	\omega= \omega_1+\omega_2
	\text{~~~~~~~~and~~~~~~~~}
	(\omega_1\wedge \omega_2)_p\neq 0~~\forall p\in M^{>0},\]
	and the linear type is thus constantly type (i) from Theorem \ref{thm:2pl6}. We observe that $\omega_1=\frac{1}{2}\omega +\frac{1}{2}\sqrt{x^2}(dx^{246}-dx^{235}-dx^{145}) +\frac{1}{2\sqrt{x^2}}dx^{136}$ and hence 
	\[
	d\omega_1=\frac{1}{4\sqrt{x^2}}dx^{1245} +\frac{1}{4\sqrt{x^2}^3}dx^{1236}.
	\]
	
\noindent As $d\omega_1\neq 0$ on any nonempty open subset of $M^{>0}$, we know that $(M^{>0},\omega)$ is nowhere flat.
\end{example}

\begin{remark}
	Similar examples can be constructed already for three-forms in $\mathbb R^5$. In \cite{MR801210} it is shown that $(dx^{12}+dx^{34})\wedge (dx^5+x^2dx^4)\in \Omega^3(\mathbb R^5)$ is nowhere flat, although it is non-degenerate, closed and has constant linear type.
\end{remark}

\subsection{2-plectic Lie groups}
\label{sub:lie}
We prove that the canonical 2-plectic structure on a simple Lie group is flat only if the group is three-dimensional.

\begin{theorem}\label{liegrp}
	Let $(G,\omega)$ be a real simple Lie group with its canonical three-form, as described in Example \ref{lie}. Then $(G,\omega)$ has constant linear type but is flat if and only if its dimension is three.
\end{theorem}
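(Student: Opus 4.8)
The plan is to treat the two assertions of the theorem separately. \emph{Constant linear type} is immediate: by Example \ref{lie} the form $\omega$ is bi-invariant, in particular left-invariant, so for every $p\in G$ the map $(dL_{p})_{e}\colon \mathfrak g=T_{e}G\to T_{p}G$ is a linear multisymplectomorphism $(\mathfrak g,\omega_{e})\xrightarrow{\ \sim\ }(T_{p}G,\omega_{p})$, and all tangent spaces inherit the same linear type. The ``if'' direction of the flatness statement is also easy: if $\dim G=3$ then $\omega$ is a nowhere-vanishing top-degree form, i.e.\ a volume form, and $(G,\omega)$ is flat by the Darboux theorem for volume forms (Theorem \ref{voldarboux}).

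The real content is to show that $\dim G>3$ forces $(G,\omega)$ to be non-flat. The guiding idea is that $\omega$ \emph{determines the Killing metric}, so a flattening chart for $\omega$ would also flatten a metric that is not flat. Write $\langle\cdot,\cdot\rangle$ for the Killing form and recall $\omega_{e}(X,Y,Z)=\langle X,[Y,Z]\rangle$. Raising the $2$-form $\iota_{X}\omega_{e}$ to an endomorphism of $\mathfrak g$ by means of $\langle\cdot,\cdot\rangle$ gives exactly $\operatorname{ad}(X)$, so by the very definition of the Killing form one has the self-referential identity $\langle X,Y\rangle=\operatorname{tr}\!\big((\iota_{X}\omega_{e})^{\sharp}\circ(\iota_{Y}\omega_{e})^{\sharp}\big)$, with $\sharp$ raising indices by the form being computed; if $Q=c\langle\cdot,\cdot\rangle$ then the right-hand side evaluated with $\sharp_{Q}$ becomes $c^{-2}\langle X,Y\rangle$, so $Q$ satisfies the same identity only when $c^{3}=1$, i.e.\ $c=1$. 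I would combine this with the \textbf{key lemma} $\operatorname{Lie}\big(\mathrm{Aut}_{Lin}(\mathfrak g,\omega_{e})\big)=\operatorname{ad}(\mathfrak g)$ and the simplicity of $\mathfrak g$ (whence the $\operatorname{ad}(\mathfrak g)$-invariant symmetric bilinear forms are exactly the multiples of $\langle\cdot,\cdot\rangle$) to conclude that \emph{every} $A\in\mathrm{Aut}_{Lin}(\mathfrak g,\omega_{e})$ preserves $\langle\cdot,\cdot\rangle$: indeed $A^{*}\langle\cdot,\cdot\rangle$ is again invariant under the identity component $\mathrm{Aut}_{Lin}(\mathfrak g,\omega_{e})^{0}$ (the adjoint group), hence a multiple of $\langle\cdot,\cdot\rangle$, and the displayed identity pins the multiple to $1$.

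Granting the key lemma, the conclusion follows quickly. Suppose $\phi\colon U\to\mathfrak g=T_{e}G$ were a chart near $e$ with $\phi(e)=0$ and $\phi^{*}\omega_{e}=\omega$. For $p\in U$ the composite $\mathfrak g\xrightarrow{(d\phi_{p})^{-1}}T_{p}G\xrightarrow{(dL_{p^{-1}})_{p}}T_{e}G=\mathfrak g$ satisfies $F_{p}^{*}\omega_{e}=\omega_{e}$ (using left-invariance of $\omega$ and constancy of $\omega_{e}$), hence lies in $\mathrm{Aut}_{Lin}(\mathfrak g,\omega_{e})$ and therefore preserves $\langle\cdot,\cdot\rangle$. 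Unwinding, this says precisely that $\phi$ is an isometry from $(U,g)$ onto an open subset of $(\mathfrak g,\langle\cdot,\cdot\rangle)$ with its constant metric, where $g$ is the bi-invariant metric attached to $\langle\cdot,\cdot\rangle$; thus $(U,g)$ would be flat. But for a bi-invariant metric the Levi-Civita curvature on left-invariant fields is $R(X,Y)Z=-\tfrac14[[X,Y],Z]$, and $[[\mathfrak g,\mathfrak g],\mathfrak g]=\mathfrak g\neq0$ since $\mathfrak g$ is simple, so $g$ is nowhere flat --- a contradiction. (The hypothesis $\dim G>3$ enters only through the key lemma: for $\dim G=3$, $\omega_{e}$ is a volume form and $\mathrm{Aut}_{Lin}(\mathfrak g,\omega_{e})=SL(\mathfrak g)$ is far larger than the adjoint group, so the metric is \emph{not} recoverable and no contradiction arises --- as it should be.)

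The main obstacle is therefore the key lemma, $\operatorname{Lie}\big(\mathrm{Aut}_{Lin}(\mathfrak g,\omega_{e})\big)=\operatorname{ad}(\mathfrak g)$. The inclusion ``$\supseteq$'' is a one-line computation from the Jacobi identity and $\operatorname{ad}$-invariance of $\langle\cdot,\cdot\rangle$. For ``$\subseteq$'' I would argue representation-theoretically: the infinitesimal action $\rho\colon\mathfrak{gl}(\mathfrak g)\to\Lambda^{3}\mathfrak g^{*}$, $A\mapsto\big((X,Y,Z)\mapsto\omega_{e}(AX,Y,Z)+\omega_{e}(X,AY,Z)+\omega_{e}(X,Y,AZ)\big)$, is $\mathfrak g$-equivariant, so $\ker\rho$ is a $\mathfrak g$-submodule of $\mathfrak{gl}(\mathfrak g)\cong\mathfrak g\otimes\mathfrak g=S^{2}\mathfrak g\oplus\Lambda^{2}\mathfrak g$ containing the adjoint copy of $\mathfrak g$ inside $\Lambda^{2}\mathfrak g$; it then remains to verify that $\rho$ is non-zero on each of the remaining irreducible summands (the trivial summand of $S^{2}\mathfrak g$ being handled by $\rho(\operatorname{id})=3\omega_{e}\neq0$), which is where $\dim\mathfrak g>3$ is genuinely used --- in dimension $3$, $\Lambda^{3}\mathfrak g^{*}$ is one-dimensional and $\rho$ necessarily has a large kernel. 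I expect this to reduce to standard facts on the decomposition of $\mathfrak g\otimes\mathfrak g$ for simple $\mathfrak g$; alternatively, one may invoke known descriptions of the stabiliser of the Cartan $3$-form in the literature on $3$-forms.
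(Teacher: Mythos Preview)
Your overall strategy coincides with the paper's: constancy of linear type from bi-invariance, flatness in dimension three from the Darboux theorem for volumes, and non-flatness for $\dim G>3$ by showing that any linear automorphism of $(\mathfrak g,\omega_e)$ must preserve the Killing form, so that a flattening chart for $\omega$ would also flatten the bi-invariant metric $h$, contradicting its nonzero curvature. The paper's argument is the same, only it obtains the algebraic core in one stroke by quoting from the literature (Theorem~2.2 of \cite{MR3054295}) the stronger statement $\mathrm{Aut}(\mathfrak g,\omega_e)=\mathrm{Aut}(\mathfrak g,[\cdot,\cdot])$, from which $\mathrm{Aut}(\mathfrak g,\omega_e)\subset\mathrm{Aut}(\mathfrak g,\langle\cdot,\cdot\rangle)$ is immediate.

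Your route to the same inclusion is more elaborate but honest: first the ``key lemma'' $\operatorname{Lie}\big(\mathrm{Aut}_{Lin}(\mathfrak g,\omega_e)\big)=\operatorname{ad}(\mathfrak g)$, then normality of the identity component plus the self-referential trace identity to handle the other components. That reduction is correct, and the trace argument pinning $c=1$ is a nice touch. The one genuine gap is the key lemma itself: your representation-theoretic sketch (decompose $\mathfrak g\otimes\mathfrak g$ into $\mathfrak g$-irreducibles and check $\rho$ is nonzero on each non-adjoint summand) is plausible but not a proof, since the decomposition of $S^2\mathfrak g$ and $\Lambda^2\mathfrak g$ varies with the type of $\mathfrak g$ and can have several pieces. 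You acknowledge this and point to the literature; that is exactly what the paper does, only earlier and for the full group rather than the Lie algebra. In short: same proof architecture, with the paper outsourcing the hard algebraic step and you attempting (but not completing) it by hand.
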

\begin{proof}
Constancy of linear type follows immediately from the bi-invariance of $\omega$. Without loss of generality, we can assume for the rest of the proof, that $G$ is connected and simple. In the three-dimensional case the flatness is a consequence of the Darboux theorem for volume forms (Theorem \ref{voldarboux}). For all real simple Lie groups of dimension higher than three, we have $$Aut(\mathfrak g,\omega_e )=Aut(\mathfrak g,[\cdot,\cdot])\subset Aut(\mathfrak g,\langle\cdot,\cdot\rangle ),$$
where the leftmost and rightmost terms are linear automorphisms preserving the respective tensor and the middle term are the Lie algebra automorphisms of $\mathfrak g$. The left equality is the statement of Theorem 2.2 of  \cite{MR3054295} and the right inclusion follows, because the Killing form is intrinsically defined from the Lie bracket.\\
Let us assume that $G$ admits a chart $\phi:U\subset G\to  V\subset\mathfrak g$ near $e$, such that $(T_g\phi)^*\omega_e=\omega_g$, where $\omega_e$ should be interpreted as the constant coefficient extension of $\omega_e\in \mathfrak g=T_e\mathfrak g$. The natural left-invariant pseudo-Riemannian metric on $G$ is defined by $h_g=-(\theta^L_g)^*\langle \cdot ,\cdot\rangle$, where $\theta_g^L:T_gG\to \mathfrak g$ is the Maurer-Cartan one-form. By construction we have
$$(\theta^L_g)\circ(T_g\phi)^{-1}\in Aut(\mathfrak g,\omega_e).$$
 So $(\theta^L_g)\circ(T_g\phi)^{-1}$ preserves $h_e=-\langle\cdot,\cdot\rangle $, i.e.
 \[
 (T_g\phi)^*h_e=(T_g\phi)^*((\theta^L_g)\circ(T_g\phi)^{-1})^*h_e=(\theta^L_g)^*h_e=h_g
 \]
 This means that $\phi$ is a flat chart for $(G,h)$, where $h$ is the canonical left-invariant metric on $G$. Such a chart can not exist, because real simple Lie groups with canonical left-invariant metric have non-zero curvature (cf. e.g. \cite{MR719023}).
\end{proof}

\section{The group of multisymplectic diffeomorphisms}

In the last section we studied the local structure of multisymplectic manifolds. In this section we will investigate the diffeomorphisms preserving this structure. 

\begin{definition} A ``{local diffeomorphism}'' $\varphi$ of $M$ is a diffeomorphism between two open subsets $U,V$ of M. It is called ``local multisymplectic diffeomorphism'' if it satisfies $\varphi^*(\omega|_V)=\omega|_U$. The pseudogroup of local multisymplectic diffeomorphisms is called $\text{Diff}_{loc}(M,\omega)$. Its subgroup of global diffeomorphisms is denoted by $\text{Diff}(M,\omega)$ and called ``group of multisymplectic diffeomorphisms or multisymplectomorphisms'' of $(M,\omega)$. The elements of the Lie algebra $\mathfrak{X}(M,\omega)=\{X|\mathcal L_X\omega=0\}\subset \mathfrak X(M)$ are called ``multisymplectic or locally Hamiltonian vector fields''.
\end{definition}

\noindent We will consider the following question:

\begin{quote}
	``Let $(M,\omega)$ be multisymplectic. How transitive is the action of $\text{Diff}(M,\omega)$ on $M$?''
\end{quote}

\noindent We will distinguish several degrees of transitivity:
\begin{definition}
	Let $X$ be a set and $G\times X\to X, (g,p)\mapsto g(p)$ a group action. The action is called ``$k$-transitive'', if for any two k-tuples $(p_1,...,p_k)$, $(q_1,...,q_k)$  of elements in $X$ satisfying $p_i\neq p_j$ and $q_i\neq q_j$ for $i\neq j$ there exists an element $g\in G$ such that $g(p_i)=q_i$ for $i=1,...,k$.
\end{definition}

In this section we will answer this question for several classes of examples. First, in Subsection \ref{sub:41} we will review the classical cases of symplectic and volume-preserving diffeomorphisms and show that the multisymplectomorphism group of $\mathbb C^n$ with the real part of a complex volume form acts $k$-transitively for all $k$. Subsection \ref{sub:42} will treat several situations, where the multisymplectic diffeomorphisms act 1-transitively but not 2-transitively, including some examples from the last section and those discussed in \cite{MR962194}. Finally, we will briefly discuss examples, where the action is not even 1-transitive.

\subsection{Very transitive cases}\label{sub:41}
The following theorem shows that the multisymplectomorphisms of symplectic and volume forms are very transitive:

\begin{theorem}[\cite{MR0236961}]\label{moser}
	Let $(M,\omega)$ be a connected symplectic manifold or a connected manifold equipped with a volume form. Then $\text{Diff}(M,\omega)$ acts k-transitively on $M$ for all $k\in\mathbb N$.
\end{theorem}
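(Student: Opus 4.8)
The plan is to prove the statement by a Moser-type deformation argument, treating the symplectic and volume cases uniformly where possible. The key observation is that $k$-transitivity for all $k$ reduces to a \emph{local} statement: given two ordered $k$-tuples of distinct points, it suffices to produce a compactly supported multisymplectomorphism carrying one to the other. Since $M$ is connected, I can first move the points into a single chart, so the whole problem is reduced to constructing, for any two $k$-tuples of distinct points in a ball $B \subset \mathbb{R}^n$ carrying the standard symplectic resp. volume form, a diffeomorphism of $B$ (identity near $\partial B$) that preserves the form and matches the tuples.

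First I would establish the reduction to the local picture. Connectedness gives, for each $i$, a path from $p_i$ to $q_i$; covering each path by finitely many Darboux charts and using a ``transport along a vector field'' argument, one reduces to the case where all $p_i$ and all $q_i$ lie in one Darboux chart. Here I need the standard fact that on a connected manifold one can displace a single point to any nearby point by a compactly supported symplectomorphism (resp.\ volume-preserving diffeomorphism): take a smooth path, thicken it to a tube, and integrate a suitable Hamiltonian (resp.\ divergence-free) vector field supported in the tube. Doing this one point at a time, while keeping the supports of the successive isotopies disjoint from the already-placed points, handles the global-to-local passage.

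Second, the local core: given distinct points $p_1,\dots,p_k$ and $q_1,\dots,q_k$ in a ball, choose a smooth isotopy $\phi_t$ of the ball, identity near the boundary, with $\phi_0 = \mathrm{id}$ and $\phi_1(p_i) = q_i$ --- this exists because configuration spaces of distinct points in a ball of dimension $\geq 2$ are connected (in the $1$-dimensional volume case one argues directly on an interval, ordering the points). Then $\omega_t := \phi_t^* \omega$ is a family of symplectic (resp.\ volume) forms agreeing with $\omega$ near $\partial B$ and at the marked points $\{q_i\}$ for $\omega_1$. Apply the Moser trick: since $\omega_t - \omega$ is exact with primitive supported away from $\partial B$ (for the symplectic case, $\omega_t - \omega = d\beta_t$; for the volume case, $\omega_t - \omega = d\beta_t$ as well by the same cohomological triviality on the ball), one solves $\iota_{X_t}\omega_t = -\beta_t$ for a time-dependent vector field $X_t$ vanishing near $\partial B$, and the flow $\psi_t$ of $X_t$ satisfies $\psi_1^*\omega_1 = \omega$. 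The composition $\phi_1 \circ \psi_1^{-1}$ (or $\psi_1 \circ \phi_1$, with care) is then a multisymplectomorphism; the only remaining wrinkle is that Moser's flow need not fix the marked points, so one should arrange $\beta_t$ to vanish to first order at each $q_i$ (possible since $\omega_t$ and $\omega$ have the same value there, up to a further correction by an exact form vanishing at those points), making $X_t(q_i) = 0$ and hence $\psi_t(q_i) = q_i$.

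The main obstacle I anticipate is precisely this last point: ensuring the Moser vector field fixes the target points $q_i$. Choosing the primitive $\beta_t$ with the right vanishing behavior at finitely many points, simultaneously with the boundary condition, requires a careful interpolation (e.g.\ subtracting off an exact form $d(\text{bump}\cdot(\text{linear model of }\beta_t\text{ near }q_i))$), and in the volume case one must check that this does not destroy the degree/normalization constraint $\int_B \omega_t = \int_B \omega$ which is what makes $\omega_t - \omega$ exact in the first place. A cleaner alternative, which I would pursue if the direct approach gets technical, is to first move the $p_i$ to $q_i$ by an \emph{exact} symplectic (resp.\ volume) isotopy built as the flow of compactly supported Hamiltonians (resp.\ divergence-free fields) through disjoint tubes, bypassing Moser entirely --- this is essentially the argument attributed to \cite{MR0236961} and is the route I expect the authors take.
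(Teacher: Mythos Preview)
The paper does not prove this theorem at all: it is stated with a citation to \cite{MR0236961} (Boothby) and used as a black box, consistent with the authors' remark in the introduction to Section~5 that Theorems~\ref{moser} and~\ref{lepage} ``are known''. So there is nothing to compare your argument against in the paper itself.

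That said, your sketch is sound, and you have correctly identified the standard dichotomy. Your ``cleaner alternative'' --- moving the points one at a time along disjoint tubes by flows of compactly supported Hamiltonian (resp.\ divergence-free) vector fields --- is exactly the classical route in Boothby's paper and is much less fiddly than the Moser-deformation version you try first. The Moser approach can be made to work, but as you already noticed, arranging the primitive $\beta_t$ to vanish at the marked points while respecting the boundary condition is an unnecessary headache; the tube argument avoids it entirely, since each elementary move is already form-preserving by construction. I would simply lead with the tube argument and drop the Moser detour.
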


\begin{corollary}\label{sympcor}
	Let $(M,\omega)$ be a connected symplectic $2n$-dimensional manifold. Then $(M,\omega^j)$ is a multisymplectic manifold for all $j\in\{1,...,n\}$. Moreover $\text{Diff}(M,\omega^j)$ always acts $k$-transitively for all $k$. 
\end{corollary}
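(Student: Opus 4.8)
The plan is to combine the genuine transitivity of the symplectomorphism group, furnished by Theorem \ref{moser}, with the elementary observation that a symplectomorphism automatically preserves every exterior power of the symplectic form. The first point to dispatch is that $(M,\omega^j)$ really is multisymplectic for each $j\in\{1,\dots,n\}$: the form $\omega^j$ is closed (being a wedge power of a closed form), and its non-degeneracy in the multisymplectic sense ($\iota_\bullet\omega^j$ injective) is a purely linear statement. Here I would argue pointwise in a Darboux chart, where $\omega=\sum_{a=1}^n dx^a\wedge dy^a$; for a nonzero tangent vector $v$ one chooses, after a linear symplectic change, $v=\partial_{x^1}$, and then $\iota_v\omega^j = j\,(\iota_v\omega)\wedge\omega^{j-1} = j\,dy^1\wedge\omega^{j-1}$, which is nonzero because $dy^1\wedge dx^2\wedge dy^2\wedge\cdots\wedge dx^j\wedge dy^j\neq 0$ inside $\Lambda^{2j}T^*M$. (One can equally cite Example \ref{exmulticotangent}-style reasoning, but the direct computation is cleanest.) This establishes that each $\omega^j$ is a $(2j{-}1)$-plectic structure on $M$.

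The second and main point is the transitivity. A diffeomorphism $\varphi$ of $M$ with $\varphi^*\omega=\omega$ satisfies $\varphi^*(\omega^j)=(\varphi^*\omega)^j=\omega^j$ for all $j$, so there is an inclusion $\mathrm{Diff}(M,\omega)\subseteq\mathrm{Diff}(M,\omega^j)$ of diffeomorphism groups. Since $M$ is connected and symplectic, Theorem \ref{moser} gives that $\mathrm{Diff}(M,\omega)$ acts $k$-transitively on $M$ for every $k\in\mathbb N$; a fortiori the larger group $\mathrm{Diff}(M,\omega^j)$ does as well. Concretely, given two $k$-tuples of pairwise-distinct points, pick $\varphi\in\mathrm{Diff}(M,\omega)$ carrying one to the other; this same $\varphi$ lies in $\mathrm{Diff}(M,\omega^j)$ and does the job.

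There is essentially no hard step here — the corollary is a soft consequence of functoriality of $\wedge$ and the already-cited Moser-type theorem. The only place demanding a small argument is the linear non-degeneracy of $\omega^j$, i.e.\ checking $\iota_v\omega^j\neq 0$ for $v\neq 0$, which as indicated reduces via Darboux coordinates and the symplectic group action to the single model vector $\partial_{x^1}$. Note also that $2j-1$ ranges over the odd integers $1,3,\dots,2n-1$, so this produces multisymplectic structures of every odd degree on a symplectic $2n$-manifold, all with very transitive automorphism groups; this is worth a remark but requires no further proof.
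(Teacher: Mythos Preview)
Your argument is correct, and the transitivity part is exactly the paper's: the inclusion $\mathrm{Diff}(M,\omega)\subseteq\mathrm{Diff}(M,\omega^j)$ together with Theorem~\ref{moser}. The only genuine difference is in the non-degeneracy check. You pass to Darboux coordinates and use the transitivity of the linear symplectic group on nonzero vectors to reduce to $v=\partial_{x^1}$, then exhibit a nonvanishing monomial in $\iota_v\omega^j$. The paper instead uses the coordinate-free identity
\[
\iota_X(\omega^j)\wedge\omega^{n-j}=j\,(\iota_X\omega)\wedge\omega^{n-1}=\tfrac{j}{n}\,\iota_X(\omega^n),
\]
so that $\iota_X\omega^j=0$ would force $\iota_X(\omega^n)=0$, contradicting that $\omega^n$ is a volume form. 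The paper's route is shorter and avoids both Darboux and the auxiliary appeal to $\mathrm{Sp}(2n)$-transitivity; your route is more hands-on but equally valid. Either way the corollary is soft.
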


\begin{proof}
	The non-degeneracy of $\omega^j$ follows from $\iota_X(\omega^j)\wedge \omega^{n-j}=j\cdot (\iota_X\omega)\wedge \omega^{n{-}1}=\frac{j}{n}\iota_X(\omega^n)$. For $X\neq 0$ the latter is non-zero, as $\omega^n$ is a volume form.  The second statement follows immediately as $\text{Diff}(M,\omega)\subset \text{Diff}(M,\omega^j)$.
\end{proof}

\noindent We note that infinitesimally the converse is also true, except for the case $j=n$:
\begin{lemma}
	Let $(M,\omega)$ be as in Corollary \ref{sympcor} with $n>1$. Then $\mathfrak X(M,\omega^j)=\mathfrak X(M,\omega)$ for $1\leq j<n$ and $\mathfrak X(M,\omega^n)\supsetneq \mathfrak X(M,\omega)$.
\end{lemma}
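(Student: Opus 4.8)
The plan is to reduce everything to the identity $\mathcal L_X\omega^{j}=j\,\omega^{j-1}\wedge\mathcal L_X\omega=j\,L^{j-1}(\mathcal L_X\omega)$, where $L$ denotes $\omega\wedge(-)$ and $\beta:=\mathcal L_X\omega=d\iota_X\omega$ is an exact, hence closed, $2$-form. This identity makes the inclusion $\mathfrak X(M,\omega)\subseteq\mathfrak X(M,\omega^{j})$ obvious for every $j$, so the content is the reverse inclusion: membership $X\in\mathfrak X(M,\omega^{j})$ is equivalent to the pointwise condition $L^{j-1}\beta_{p}=0$ in $\Lambda^{2j}T_{p}^{*}M$ for all $p$, and the task is to decide when this forces $\beta=0$.

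For $1\le j<n$ the plan is to invoke the linear Lefschetz fact that on any $2n$-dimensional symplectic vector space $(V,\omega_{p})$ the map $L^{\,n-2}\colon\Lambda^{2}V^{*}\to\Lambda^{2n-2}V^{*}$ is an isomorphism (the case $p=2$ of $L^{\,n-p}\colon\Lambda^{p}V^{*}\xrightarrow{\ \sim\ }\Lambda^{2n-p}V^{*}$, available since $n\ge2$). If $L^{j-1}\beta_{p}=0$, wedging with $\omega_{p}^{\,n-1-j}$, which is legitimate because $n-1-j\ge0$, gives $\omega_{p}^{\,n-2}\wedge\beta_{p}=0$, whence $\beta_{p}=0$ by injectivity of $L^{\,n-2}$. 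As $p$ is arbitrary this yields $\mathcal L_X\omega=0$, i.e. $\mathfrak X(M,\omega^{j})\subseteq\mathfrak X(M,\omega)$, and hence equality.

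For $j=n$ the form $\omega^{n}$ is a volume form, so $\mathfrak X(M,\omega^{n})$ consists exactly of the vector fields with vanishing $\omega^{n}$-divergence, equivalently, by the identity above, of those $X$ for which $\mathcal L_X\omega$ is pointwise a \emph{primitive} $2$-form. To prove strictness it suffices to produce one $X\in\mathfrak X(M,\omega^{n})\setminus\mathfrak X(M,\omega)$. In a Darboux chart $(q^{1},p_{1},\dots,q^{n},p_{n})$ with $\omega=\sum_{a}dq^{a}\wedge dp_{a}$, the field $Y=q^{1}\,\partial_{q^{2}}$ has vanishing divergence ($\partial_{q^{2}}q^{1}=0$), so $\mathcal L_Y\omega^{n}=0$, whereas $\mathcal L_Y\omega=dq^{1}\wedge dp_{2}\neq0$, and $dq^{1}\wedge dp_{2}$ is indeed primitive since $n\ge2$. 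To globalise $Y$ without destroying divergence-freeness one cuts off not $Y$ itself but a primitive of $\iota_Y\omega^{n}$: one has $\iota_Y\omega^{n}=n!\,d\lambda$ with $\lambda=\tfrac12(q^{1})^{2}\,dp_{1}\wedge dp_{2}\wedge dq^{3}\wedge dp_{3}\wedge\cdots\wedge dq^{n}\wedge dp_{n}$, and with a bump function $\chi$ supported in the chart and equal to $1$ near its centre, the form $\sigma:=d(\chi\lambda)$ extends by zero to a globally defined closed $(2n{-}1)$-form on $M$. Taking $X$ to be the unique vector field with $\iota_X\omega^{n}=\sigma$, which exists since $\iota_{\bullet}\omega^{n}\colon TM\to\Lambda^{2n-1}T^{*}M$ is an isomorphism, one gets $\mathcal L_X\omega^{n}=d\sigma=0$, while near the centre (where $\chi\equiv1$) $X$ is a nonzero constant multiple of $Y$, so $\mathcal L_X\omega\neq0$; thus $X$ lies in $\mathfrak X(M,\omega^{n})$ but not in $\mathfrak X(M,\omega)$.

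The only step that is not pure bookkeeping is the isomorphism $L^{\,n-2}\colon\Lambda^{2}V^{*}\to\Lambda^{2n-2}V^{*}$, and this is the step I would spell out with care. For a self-contained treatment it follows from the primitive decomposition $\Lambda^{2}V^{*}=\mathbb R\,\omega_{p}\oplus P^{2}$ together with the $\mathfrak{sl}_{2}$-module structure on $\Lambda^{\bullet}V^{*}$ generated by $L$ and its contraction adjoint: $L^{\,n-2}$ is injective on each of the two summands and carries them into distinct pieces of the primitive decomposition of $\Lambda^{2n-2}V^{*}$. The same analysis explains why $j=n$ is exceptional, namely that $L^{\,n-1}$ annihilates exactly the primitive $2$-forms, a subspace that is nonzero precisely when $n\ge2$.
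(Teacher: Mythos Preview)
Your proof is correct. For $1\le j<n$ you and the paper argue identically: the identity $\mathcal L_X\omega^{j}=j\,\omega^{j-1}\wedge\mathcal L_X\omega$ reduces the question to the injectivity of $\omega^{n-2}\wedge(-)$ on $2$-forms, which the paper records as Lepage's divisibility theorem and you as the linear Lefschetz isomorphism $L^{n-2}\colon\Lambda^{2}V^{*}\to\Lambda^{2n-2}V^{*}$.

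For $j=n$ the approaches differ. The paper simply invokes Gromov's non-squeezing theorem, implicitly passing from a group-level statement (volume-preserving embeddings exist that are not symplectic) to the infinitesimal one. You instead exhibit an explicit divergence-free, non-symplectic vector field $Y=q^{1}\partial_{q^{2}}$ in a Darboux chart and globalise it by cutting off a primitive of $\iota_Y\omega^{n}$ rather than $Y$ itself, so that divergence-freeness survives the cutoff. Your route is far more elementary and self-contained: it avoids a deep hard-analysis result and proves precisely the infinitesimal statement on the given manifold $M$, whereas the paper's one-line citation leaves unwritten the passage from non-squeezing in $\mathbb R^{2n}$ to a vector field on an arbitrary symplectic $(M,\omega)$.
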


\begin{proof}
	The $j=n>1$ case is a consequence of Gromovs non-squeezing theorem, cf. e.g. \cite{MR809718}.	If $j<n$ we calculate: \[\mathcal{L}_X(\omega^j)=j\cdot \mathcal L_X\omega \wedge \omega ^{j-1}.\]
	So  $\mathcal L_X\omega=0$ implies $\mathcal{L}_X(\omega^j)=0$. But on the other hand $\cdot \wedge \omega^j: \Omega^2(M)\to \Omega^{2+2j}(M)$ is injective for $2j\leq 2n-2$ by Lepage's divisibility Theorem, which is stated below. 
\end{proof}

\begin{theorem}[Lepage's divisibility Theorem, see, e.g., \cite{MR882548}]\label{lepage}
	Let $\Omega\in \Lambda^2(\mathbb R^{2n})^*$ be non-degenerate. Then the following map is a bijection for $0\leq p<n$:
	\[\Lambda^p(\mathbb R^{2n})^*\to \Lambda^{2n-p}(\mathbb R^{2n})^*,~\eta\mapsto \eta\wedge \Omega^{n-p}.\] 
\end{theorem}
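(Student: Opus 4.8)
The plan is to reduce everything to the Darboux normal form $\Omega = \sum_{i=1}^{n} e^{2i-1} \wedge e^{2i}$ on $\mathbb R^{2n}$ (with $\{e^1, \dots, e^{2n}\}$ the dual basis), since the map in question is $GL$-equivariant and every non-degenerate $2$-form is linearly equivalent to this one. Once we are in the standard symplectic vector space, the statement becomes a purely combinatorial fact about the Lefschetz operator $L = \Omega \wedge (\cdot)$ acting on the exterior algebra. The key structural input is the $\mathfrak{sl}_2$-representation theory underlying hard Lefschetz: the operators $L$, its adjoint $\Lambda$ (contraction with the dual bivector), and the degree-counting operator $H$ furnish an $\mathfrak{sl}_2$-action on $\Lambda^\bullet(\mathbb R^{2n})^*$, and the claim that $L^{n-p}\colon \Lambda^p \to \Lambda^{2n-p}$ is an isomorphism for $0 \le p < n$ is exactly the statement that this representation is "balanced" in the relevant weights — i.e.\ $L^{k}\colon \Lambda^{n-k} \to \Lambda^{n+k}$ is an isomorphism for all $k \ge 0$.

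Concretely I would proceed as follows. First, observe that since the source and target have the same dimension, $\binom{2n}{p} = \binom{2n}{2n-p}$, it suffices to prove injectivity (or surjectivity) of $\eta \mapsto \eta \wedge \Omega^{n-p}$. Second, set up the $\mathfrak{sl}_2$-triple: with $\Omega = \sum e^{2i-1}\wedge e^{2i}$ define $\Lambda$ by contraction with the dual Poisson bivector $\pi = \sum \partial_{2i-1}\wedge \partial_{2i}$, and check the commutation relations $[\Lambda, L] = (n - \deg)\,\mathrm{id}$ on each $\Lambda^k$, so that $L$, $\Lambda$, $H := (n-\deg)\mathrm{id}$ form an $\mathfrak{sl}_2$-triple acting on the finite-dimensional space $\Lambda^\bullet(\mathbb R^{2n})^*$. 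Third, invoke the classification of finite-dimensional $\mathfrak{sl}_2$-representations: every such representation decomposes into irreducibles, on each of which a power $L^{k}$ maps the weight-$(-k)$ space isomorphically onto the weight-$(+k)$ space whenever that makes sense. Translating weights back to degrees ($H$ acts by $n - k$ on $\Lambda^k$), the weight-$j$ space is $\Lambda^{n-j}$, so $L^{n-p}$ sends the weight-$(p-n)$... wait, it sends $\Lambda^p$ (weight $n-p > 0$) — here I must be slightly careful with signs — to $\Lambda^{2n-p}$ (weight $p-n < 0$), and this is an isomorphism precisely by the $\mathfrak{sl}_2$ lemma applied with $k = n-p$, using $0 \le p < n$. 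Finally, translate back to arbitrary non-degenerate $\Omega$ by the linear change of basis.

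The main obstacle — or rather the main technical point to get exactly right — is the sign and combinatorial bookkeeping in the commutator $[\Lambda, L] = n - \deg$; everything downstream is formal $\mathfrak{sl}_2$ theory. An alternative, more elementary route that avoids $\mathfrak{sl}_2$ entirely is an explicit inductive argument on $n$: split $\mathbb R^{2n} = \mathbb R^{2} \oplus \mathbb R^{2n-2}$ so that $\Omega = e^1\wedge e^2 + \Omega'$ with $\Omega'$ the symplectic form on the second factor, expand any $\eta \in \Lambda^p$ as $\eta = e^1\wedge e^2 \wedge a + e^1 \wedge b + e^2 \wedge c + d$ with $a,b,c,d$ forms on $\mathbb R^{2n-2}$, expand $\Omega^{n-p}$ by the binomial theorem, and reduce injectivity of $L^{n-p}$ on $\mathbb R^{2n}$ to the inductive hypothesis on $\mathbb R^{2n-2}$ in adjacent degrees. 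This avoids representation theory but costs a somewhat fiddly multilinear computation; I would present the $\mathfrak{sl}_2$ argument as the conceptual proof and merely cite \cite{MR882548} for the details if brevity is desired.
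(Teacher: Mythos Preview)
Your proposal is correct: both the $\mathfrak{sl}_2$-triple argument (the standard hard-Lefschetz mechanism) and the inductive splitting $\mathbb R^{2n}=\mathbb R^2\oplus\mathbb R^{2n-2}$ are valid routes to the result, and you correctly observe that equality of dimensions reduces the task to injectivity. The sign wobble you flag is harmless: with your commutator $[\Lambda,L]=(n-\deg)\,\mathrm{id}$ the operator $L$ is the \emph{lowering} operator for $H=n-\deg$ (since $[H,L]=-2L$), so $\Lambda^p$ sits in weight $n-p>0$ and $L^{n-p}$ carries it isomorphically to weight $-(n-p)$, i.e.\ to $\Lambda^{2n-p}$; the conclusion is unchanged once the roles of raising and lowering are assigned consistently.

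As for the comparison: the paper does not supply its own proof of this theorem at all---it merely states the result and refers the reader to \cite{MR882548}. So there is nothing to compare your argument against; you have in fact done strictly more than the paper does here. If anything, your closing remark (``cite \cite{MR882548} for the details if brevity is desired'') is exactly what the paper chose to do.
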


Another class of multisymplectic manifolds with very transitive multisymplectomorphism groups arises from complex volume forms.

\begin{theorem}\label{transC}
	Let $n\geq 2$, $M=\mathbb C^n\cong \mathbb{R}^{2n}$ and $\omega=Re(dz^1\wedge ...\wedge dz^n)$. Then $\text{Diff}(M,\omega)$ acts $k$-transitively on $M$ for all $k$. 
\end{theorem}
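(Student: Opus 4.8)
The plan is to reduce $k$-transitivity to a combination of two ingredients: a local ``Moser-type'' statement near a finite set of points, and a connectedness/transport argument to move arbitrary configurations to a standard one. First I would observe that $(M,\omega)$ with $M=\mathbb{C}^n$ and $\omega=\mathrm{Re}(dz^1\wedge\cdots\wedge dz^n)$ is flat (it has constant linear type and is manifestly given by a constant-coefficient form in the standard coordinates), so the difficulty is entirely global: we must produce compactly supported multisymplectomorphisms moving one $k$-tuple of distinct points to another. The strategy is the standard one for such transitivity results: fix $k$ distinct points $p_1,\dots,p_k$ and $q_1,\dots,q_k$; since $\mathbb{C}^n$ is connected (indeed contractible), choose for each $i$ a smooth path from $p_i$ to $q_i$, and — after a small perturbation of the paths, possible because $2n\ge 4$ — arrange that the paths are disjoint embedded arcs. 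It then suffices to realize the ``time-one flow along these arcs'' by a multisymplectic isotopy with compact support.

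The key step is therefore the following local statement: given a point $p\in M$ and a tangent vector (or short arc through $p$), there is a compactly supported vector field $X$ with $\mathcal{L}_X\omega=0$ whose flow translates a neighborhood of $p$ a prescribed (small) amount. For this I would work in the flat chart where $\omega=\mathrm{Re}(dz^1\wedge\cdots\wedge dz^n)$ has constant coefficients and look for $X$ among the vector fields annihilated by $\omega$ in the Lie-derivative sense, i.e.\ $\iota_X\omega$ closed. One natural source: translations $\partial/\partial x^j$ are globally multisymplectic (constant coefficients), and one can cut them off using the abundance of multisymplectic vector fields — concretely, for any $(n-1)$-form $\beta$ with $d\beta$ having the right divisibility one gets a Hamiltonian vector field, but more simply one can exploit that the space of multisymplectic vector fields for a complex volume form is large, containing in particular all holomorphic vector fields with divergence-type conditions. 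I would localize by taking $X = \chi \cdot \partial/\partial x^j$ for a bump function $\chi$ and then \emph{correcting} $X$ by a further multisymplectic field so that $\mathcal{L}_X\omega=0$ exactly; the correction term is controlled because $\mathcal{L}_{\chi\partial/\partial x^j}\omega = d\chi\wedge \iota_{\partial/\partial x^j}\omega$ is exact, and one solves for a primitive within the appropriate space of forms that can arise as $\iota_Y\omega$. This is where the hypothesis $n\ge 2$ (so that $\omega$ has degree $\ge 3$) and the special structure of the complex volume form enter, as in the lemma from \cite{MR2462806} used for Theorem \ref{clxthm}.

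Assembling: having built, near each arc, a time-dependent compactly supported multisymplectic vector field transporting $p_i$ to $q_i$ while staying inside a tubular neighborhood disjoint from the other arcs and from the other $p_j,q_j$, the composition of the corresponding time-one flows is a single element of $\mathrm{Diff}(M,\omega)$ sending $p_i\mapsto q_i$ for all $i$. Since $k$ was arbitrary, $\mathrm{Diff}(M,\omega)$ acts $k$-transitively for every $k$. The main obstacle I anticipate is precisely the ``cutoff with exact correction'' step: one must check that the exact $n$-form $d\chi\wedge\iota_{\partial/\partial x^j}\omega$ lies in the image of $Y\mapsto \iota_Y\omega$ up to an exact term, and that the correcting field $Y$ can itself be chosen with support in the prescribed tube. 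This is a cohomological/linear-algebraic computation at each point combined with a partition-of-unity patching; it is routine in spirit but is the only place where real work (as opposed to formal manipulation) is needed, and it is the reason the theorem is stated for the explicit model $\mathbb{C}^n$ rather than for a general flat $(n-1)$-plectic complex manifold.
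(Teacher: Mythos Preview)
Your approach has a genuine and fatal gap for $n\geq 3$: there are \emph{no} nonzero compactly supported multisymplectic vector fields for $\omega=\mathrm{Re}(\Omega)$ with $\Omega=dz^1\wedge\cdots\wedge dz^n$, so the ``cutoff with correction'' step cannot be carried out.

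To see this, write a real vector field as $X=X'+\overline{X'}$ with $X'$ of type $(1,0)$. Since $\Omega$ is $(n,0)$ one has $\iota_X\Omega=\iota_{X'}\Omega$, hence
\[
\mathcal{L}_X\omega=\mathrm{Re}\bigl(d\,\iota_{X'}\Omega\bigr)=\mathrm{Re}\bigl(\partial\iota_{X'}\Omega+\bar\partial\iota_{X'}\Omega\bigr).
\]
The two summands are of types $(n,0)$ and $(n-1,1)$ respectively, and for $n\geq 3$ the four types $(n,0),(0,n),(n-1,1),(1,n-1)$ are pairwise distinct. Thus $\mathcal{L}_X\omega=0$ forces both $\partial\iota_{X'}\Omega=0$ and $\bar\partial\iota_{X'}\Omega=0$. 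The second equation says that each coefficient of $X'$ is holomorphic; the first then says $\mathrm{div}(X')=0$. In particular every multisymplectic vector field is the real part of a holomorphic (divergence-free) vector field on $\mathbb{C}^n$, and by the identity theorem such a field with compact support vanishes identically. Your correction term $Y$ therefore cannot exist with support in a prescribed tube, and the whole Moser-type scheme collapses. (For $n=2$ the type argument degenerates, $\omega$ is symplectic, and one is back in the classical situation of Theorem~\ref{moser}.)

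The paper circumvents this rigidity in a completely different way: it works with \emph{global} automorphisms rather than compactly supported ones, observing that the group $\mathrm{Aut}_1^{\mathrm{alg}}(\mathbb{C}^n)$ of polynomial biholomorphisms with Jacobian determinant $1$ preserves $\Omega$ and hence $\omega$. After a linear change putting the first coordinates of the $p_j$ in general position, one uses polynomial shears of the form $(x,y,z)\mapsto (x,\,y-P(x),\,z-Q(x))$ and $(x,y,z)\mapsto (x-P(z),\,y,\,z)$, where $P,Q$ are interpolating polynomials, to send any $k$ distinct points to the standard configuration $(0,0,1),\dots,(0,0,k)$. This gives $k$-transitivity directly. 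The moral is that for this multisymplectic structure the relevant symmetry group is large but ``rigid'' (holomorphic), so one must exploit algebraic automorphisms rather than local bump-function constructions.
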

We will prove the stronger statement, that the group $Aut_1^{alg}(\mathbb C^n)\subset \text{Diff}(M,\omega)$ of polynomial biholomorphisms of determinant one acts $k$-transitively on $\mathbb C^n$ for all $k$. The idea of the proof below was explained to us by Frank Kutzschebauch who showed a much more general result on biholomorphism groups in \cite{MR3667419}. 
\begin{proof}
Let $p_j=(x_j,y_j,z_j)_{1\leq j\leq k}\in \mathbb C\times \mathbb C^{n-2}\times \mathbb C$ be pairwise different points.
\begin{enumerate}
		\item Without loss of generality, we may assume that all $x_i$ are pairwise different. To see this, we will find a map $T$ in $SL(n,\mathbb C)\subset Aut_1^{alg}(\mathbb C^n)$ such that $T(p_i)$ have different first components for $i\in \{1,...,k\}$. Consider the $k\choose 2$ hyperplanes $H_{ij}=\{\psi\in (\mathbb C^n)^*|\psi(p_i-p_j)=0\}\subset (\mathbb C^n)^*$ for $1\leq i\leq j\leq k$. As there are only finitely many $H_{ij}$, the space $(\mathbb C^n)^*\backslash \bigcup H_{ij}$ is non-empty. Let $\phi_1\in(\mathbb C^n)^*\backslash \bigcup H_{ij}$. Extend $\phi_1$ to a basis $\{\phi_1,...,\phi_n\}$ of $(\mathbb C^*)^n$. Then $\tilde T(p):=(\phi_1(p),...,\phi_n(p))$ is a linear isomorphism, such that $(T(p_j))_{j\in\{1,...,k\}}$ have different first components. We get the desired map by setting $T=\lambda \tilde T$ for an appropriate $\lambda\in \mathbb C\backslash\{0\}$.
		\item There is an algebraic automorphism with Jacobian determinant 1, moving $(x_j,y_j,z_j)$ (with $x_i$ pairwise different) to $(x_j,0,j)$. Let $P:\mathbb C\to \mathbb C^{n-2}$ be a polynomial satisfying $P(x_j)=y_j$ for $j\in\{1,...,k\}$ and $Q:\mathbb C\to \mathbb C$ a polynomial satisfying $Q(x_j)= z_j-j$ for $j\in\{1,...,k\}$. Then the desired algebraic automorphism is given by $(x,y,z)\mapsto (x,y-P(x),z-Q(x))$. Note that no polynomial $P$ is necessary when $n=2$.
		\item There is an algebraic automorphism with Jacobian  determinant 1, moving $(x_j,0,j)$ (with $x_i$ pairwise different) to $(0,0,j)$. Let $P:\mathbb C\to \mathbb C$ be a polynomial satisfying $P(j)=x_j$ for $j\in\{1,...,k\}$. Then the automorphism $(x,y,z)\mapsto (x-P(z),y,z)$ has the desired property.
		\item By composing steps 1., 2. and 3. we can construct an algebraic biholomorphism $\Psi$ of determinant 1, such that $\phi(p_j)=(0,0,j)$ for $j\in\{1,...,k\}$. Given an alternative collection of points $\tilde p_1,...,\tilde p_k$ we can construct $\tilde \Psi$ in the same manner. Then $\tilde \Psi\circ \Psi^{-1}$ is an algebraic automorphism of Jacobian determinant 1 such that $\tilde \Psi\circ \Psi^{-1}(p_j)=\tilde p_j$ for $j\in \{1,...,k\}$. 
\end{enumerate}
\end{proof}

\begin{example} The multisymplectomorphisms of the manifold $M=\mathbb R^6$, $\omega=dx^{135}-dx^{146}- dx^{236}-dx^{245}$ act $k$-transitively on $M$ for all $k$. This example is just the real description of the $(n=3)$-case of the above theorem.
\end{example}

\subsection{Slightly transitive cases}\label{sub:42}

In this subsection we will treat several examples, where the action of $\text{Diff}(M,\omega)$ is 1-transitive but not 2-transitive. To identify those cases, we will use the following criterion.

\begin{lemma}\label{lemfol}
	Let $M$ be an $n$-dimensional manifold. If a group $G\subset \text{Diff}(M)$ preserves a  regular foliation $\mathcal F$ of dimension $r\not\in\{0,n\}$, its action on $M$ is not 2-transitive.
\end{lemma}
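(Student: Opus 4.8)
The plan is to argue by contradiction using the foliation to detect an obstruction to moving one pair of points to another. Suppose $G$ acts $2$-transitively on $M$ and $\mathcal{F}$ is a $G$-invariant regular foliation of dimension $r$ with $0 < r < n$. First I would fix a point $p \in M$ and consider its leaf $L_p \in \mathcal{F}$. Since $r < n$, the leaf $L_p$ is a proper subset of (the connected component containing $p$ of) $M$, so I can pick a point $q \in M$, $q \neq p$, with $q \notin L_p$; since $r > 0$, the leaf $L_p$ contains at least one point $q'$ distinct from $p$. (One should restrict attention to a single connected component of $M$ throughout, as transitivity already forces the relevant component to be preserved; alternatively one notes that $2$-transitivity on a disconnected manifold with more than one component is immediately impossible, so $M$ is connected.)

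The key step is then: because $G$ preserves $\mathcal{F}$, every $g \in G$ maps leaves to leaves, hence maps $L_p$ to the leaf $L_{g(p)}$ through $g(p)$. Consequently, for any $g \in G$, the point $g(p)$ and $g(q')$ lie in the same leaf, while $g(p)$ and $g(q)$ lie in different leaves. Now apply $2$-transitivity to the pairs $(p, q')$ and $(p, q)$: there should exist $g \in G$ with $g(p) = p$ and $g(q') = q$. But $q' \in L_p$ forces $g(q') \in L_{g(p)} = L_p$, whereas $q \notin L_p$ — a contradiction. Hence no such $G$-invariant foliation can exist if the action is $2$-transitive, which is the contrapositive of the claim.

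I do not expect a genuine obstacle here; the statement is essentially a formal consequence of the definitions, and the only points requiring a modicum of care are bookkeeping ones: ensuring the chosen points $p, q, q'$ are genuinely distinct and correctly positioned relative to $L_p$ (which is exactly where the hypotheses $r \neq 0$ and $r \neq n$ enter), and handling connectedness so that "off the leaf" points actually exist. If one wanted to be maximally careful about the regular-foliation hypothesis, one would simply invoke that a regular foliation has leaves that are (immersed) submanifolds of the stated dimension, so a leaf of dimension $0 < r < n$ is neither a single point nor an open subset — precisely what the argument uses.
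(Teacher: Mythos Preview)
Your proof is correct and follows essentially the same approach as the paper: both exploit that a leaf-preserving diffeomorphism cannot send a pair of points lying in a common leaf to a pair lying in distinct leaves, using $r\neq 0$ to find two points in one leaf and $r\neq n$ to find a point off that leaf. The only cosmetic difference is that you fix the first point via $g(p)=p$, whereas the paper simply exhibits two pairs $(p_1,p_2)$ and $(q_1,q_2)$ with no $g$ carrying one to the other.
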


\begin{proof}
	Let $\phi$ be a diffeomorphism preserving $\mathcal F$. If $p_1,p_2$ are in the same leaf $F$, then $\phi(p_1), \phi(p_2)$ have to be in the same leaf $\phi(F)$. As $r$ is required to be different from $n$ several leaves exist, and as $r$ is nonzero each leaf contains many points. We take leaves $F_1\neq F_2$ and $p_1\neq p_2$ in $F_1$ and $q_1\in F_1$ and $q_2\in F_2$, then there is no $\phi\in G$, such that $\phi(p_j)=q_j$ for $j=1,2$.
\end{proof}

Using this criterion, we will first analyse a few flat examples from the last subsection, and then give an analysis of the non-flat case built in Example \ref{nflat}.

\begin{example}\label{ediff1}
	Let $(M,\omega)=(\mathbb R^6, dx^{156} -dx^{246}+dx^{345})$, i.e. the flat model with the third linear type from  Subsection \ref{subs:2pl6}. By Theorem \ref{thm:2pl6} the integrable distribution $E$ generated by $\{\frac{\partial}{\partial x^1},\frac{\partial}{\partial x^2},\frac{\partial}{\partial x^3}\}$ is preserved by $\text{Diff}(M,\omega)$ as it is constructed naturally only using $\omega$. Hence $\text{Diff}(M,\omega)$ does not act 2-transitively on $M$. However all translations are multisymplectomorphisms. Thus, in this case, $\text{Diff}(M,\omega)$ acts transitively, but not 2-transitively on $M$.
\end{example}

\begin{remark}\label{rdiff1}
	This example can be extended to all multisymplectic manifold built as in Example \ref{exmulticotangent}. By Theorem \ref{multithm} the multisymplectomorphisms of these manifolds preserve the (foliation given by the) fibers of $\pi$, hence they do not act 2-transitively on $M$. In \cite{MR962194, MR2105190} the multisymplectomorphism groups are explicitly calculated. They are isomorphic to $\text{Diff}(Q)\ltimes \Omega_{cl}^n(Q)$, i.e. they consist of diffeomorphisms of the base and translations by closed forms on the fibres.
\end{remark}

\begin{example}\label{ediff2}
	Let $(M,\omega)=(\mathbb R^6, dx^{123}+dx^{456})$. By Theorem \ref{thm:2pl6} the forms $\omega_1= dx^{123}$ and $\omega_2=dx^{456}$ are either preserved or interchanged by multisymplectic diffeomorphisms. By an argument analogous to Lemma \ref{lemfol}, we see that starting with two points which are in different leaves with respect to both the foliations generated by $\{\frac{\partial}{\partial x^1},\frac{\partial}{\partial x^2},\frac{\partial}{\partial x^3}\}$ respectively $\{\frac{\partial}{\partial x^4},\frac{\partial}{\partial x^5},\frac{\partial}{\partial x^6}\}$, we can not arrive at a pair of points which share the same $(x^1,x^2,x^3)$-coordinates.
	Again, we can achieve 1-transitivity by translations. In conclusion, $\text{Diff}(M,\omega)$ acts transitively, but not 2-transitively on $M$. 
\end{example}

\begin{remark}
	This example also can be generalized to the setting of Theorem \ref{prodthm}. For $m>2$ and $k>1$ the multisymplectic manifold $M=\mathbb R^{km}$, $\omega=dx^{1,2,...,m}+dx^{m+1,...,2m}+...+dx^{(k-1)m+1,...,km}$ satisfies: $\text{Diff}(M,\omega)$ acts transitively, but not 2-transitively on $M$.
\end{remark}

\begin{proposition}[Nonflat]\label{nflat2} Let $M^{>0}=\{(x^1,x^2,x^3,x^4,x^5,x^6)\in \mathbb R^6~|~x^2>0\}$, $f:\mathbb R^{>0}\to \mathbb R^{>0},~f(x^2)=x^2$ and $\omega^f=  dx^{135} -dx^{146}-dx^{236} + f(x^2)\cdot dx^{245}$. Then $(M^{>0},\omega^f)$ is multisymplectic and of constant linear type. Furthermore it is non-flat and its multisymplectic diffeomorphisms act 1-transitively but not 2-transitively.
\end{proposition}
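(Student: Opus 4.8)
The first three claims are not new: $(M^{>0},\omega^f)$ is multisymplectic and of constant linear type, and is non-flat, are exactly Lemma~\ref{lem:2pl} and Example~\ref{nflat} specialised to $f(x^2)=x^2$. On $M^{>0}$ one has $f>0$, so the pointwise type is constantly type~(i) of Theorem~\ref{thm:2pl6}; the canonical decomposition $\omega^f=\omega_1+\omega_2$ into decomposable $3$-forms is the one written down in Example~\ref{nflat}, and the computation there gives $d\omega_1=\frac{1}{4\sqrt{x^2}}\,dx^{1245}+\frac{1}{4(x^2)^{3/2}}\,dx^{1236}$, which is nonzero at every point of $M^{>0}$, so by Theorem~\ref{thm:2pl6}(i) the manifold is nowhere flat. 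Thus only the transitivity assertions remain.

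For $1$-transitivity I would exhibit enough multisymplectomorphisms. First, any translation $x\mapsto x+c$ with $c^2=0$ preserves $\omega^f$, since it fixes the only non-constant coefficient $f(x^2)=x^2$ while the monomials $dx^{ijk}$ are translation-invariant, and it maps $M^{>0}$ into itself. Second, for each $t>0$ the linear map $\varphi_t(x^1,\dots,x^6)=(t^{3/2}x^1,\,t x^2,\,t^{-1}x^3,\,t^{-3/2}x^4,\,t^{-1/2}x^5,\,x^6)$ maps $M^{>0}$ into itself and satisfies $\varphi_t^*\omega^f=\omega^f$ — a one-line check, the exponents $\tfrac{3}{2},1,-1,-\tfrac{3}{2},-\tfrac{1}{2},0$ being chosen precisely so that each of $dx^{135},dx^{146},dx^{236}$ and $x^2\,dx^{245}$ is fixed. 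Since $\varphi_t$ scales the $x^2$-coordinate by $t$, given $p,q\in M^{>0}$ one first applies $\varphi_{q^2/p^2}$ and then a suitable translation along the remaining five coordinates to carry $p$ to $q$; hence $\text{Diff}(M^{>0},\omega^f)$ acts $1$-transitively.

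For the failure of $2$-transitivity the plan is to produce a canonical integrable distribution of intermediate dimension and invoke Lemma~\ref{lemfol}. By Theorem~\ref{thm:2pl6}(i) the pair $\{\omega_1,\omega_2\}$ is intrinsic to $\omega^f$ up to order, so every $\varphi\in\text{Diff}(M^{>0},\omega^f)$ satisfies $\varphi^*\omega_1\in\{\omega_1,\omega_2\}$; since $d\omega_1+d\omega_2=d\omega^f=0$, this forces $\varphi^*(d\omega_1)=\pm\,d\omega_1$. Hence the $4$-form $d\omega_1$ is $\text{Diff}(M^{>0},\omega^f)$-invariant up to sign, and so are the codistribution $N^{*}:=\{\mu\in T^{*}M^{>0}\mid \mu\wedge d\omega_1=0\}$ and its annihilator $Q\subset TM^{>0}$. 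Writing $d\omega_1=dx^1\wedge dx^2\wedge\rho$ with $\rho=\frac{1}{4\sqrt{x^2}}\,dx^{45}+\frac{1}{4(x^2)^{3/2}}\,dx^{36}$, one checks that $\rho$ restricts to a non-degenerate $2$-form on $\mathrm{span}(\partial_3,\partial_4,\partial_5,\partial_6)$ at every point, so $\mu\wedge d\omega_1=0$ forces $\mu\in\mathrm{span}(dx^1,dx^2)$; therefore $N^{*}$ has constant rank $2$ and $Q=\ker(dx^1)\cap\ker(dx^2)=\mathrm{span}(\partial_3,\partial_4,\partial_5,\partial_6)$. This $Q$ is integrable — its leaves are the slices $\{x^1=\mathrm{const},\,x^2=\mathrm{const}\}$, or simply because $dx^1$ and $dx^2$ are closed — and has dimension $4\notin\{0,6\}$, so Lemma~\ref{lemfol} shows that $\text{Diff}(M^{>0},\omega^f)$ is not $2$-transitive.

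The main obstacle is this last step: one must locate a genuine $\text{Diff}(M^{>0},\omega^f)$-invariant that is \emph{integrable}. The distributions immediately supplied by the type-(i) decomposition, namely the kernels $\ker(\iota_\bullet\omega_i)$, are \emph{not} involutive here (this is exactly why $(M^{>0},\omega^f)$ is non-flat), so one has to pass to a derived object; the exact $4$-form $d\omega_1$ — equivalently the bivector it determines via the canonical volume $\omega_1\wedge\omega_2$ — does the job, and the real content of the argument is the constant-rank claim for $N^{*}$, which rests on the non-degeneracy of the residual $2$-form $\rho$ throughout $M^{>0}$.
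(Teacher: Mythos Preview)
Your proof is correct and follows the same overall strategy as the paper---translations plus a scaling for $1$-transitivity, then an invariant foliation extracted from $d\omega_1$ together with Lemma~\ref{lemfol} for the failure of $2$-transitivity---but the extraction step is organised differently. The paper introduces the volume $\Omega=\omega_1\wedge\omega_2=2\sqrt{x^2}\,dx^{123456}$, defines the bivector $\xi$ by $\iota_\xi\Omega=d\omega_1$, and then computes the invariant $2$-form $\iota_\xi d\omega_1=\tfrac{1}{16(x^2)^{5/2}}\,dx^{12}$, whose kernel is the desired foliation. Your route---reading the foliation off directly as the annihilator of the rank-$2$ codistribution $N^*=\{\mu:\mu\wedge d\omega_1=0\}$---is more economical: it avoids the auxiliary volume and bivector entirely, and the constant-rank check via the nondegeneracy of the residual $2$-form $\rho$ is cleaner than the explicit bivector computation. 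Both arguments land on the same foliation by the slices $\{x^1=\text{const},\,x^2=\text{const}\}$. For $1$-transitivity your explicit one-parameter family $\varphi_t$ is the integrated form of the infinitesimal argument the paper gives via a locally Hamiltonian Euler-type vector field; your exponents are easily checked and the argument is self-contained.
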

	
\begin{proof}(Notations as in Example \ref{nflat}) As discussed in Example \ref{nflat}, $(M^{>0},\omega^f)$ has constant linear type and is non-flat. Since the decomposable forms $\omega_1,\omega_2$ fulfilling $\omega=\omega^f=\omega_1+\omega_2$ are unique up to order by Theorem \ref{thm:2pl6}, a multisymplectic diffeomorphism of $(M^{>0},\omega^f)$ preserves or permutes $\omega_1$ and $\omega_2$. Hence it preserves (or reverts the sign of) $\Omega=\omega_1\wedge \omega_2=2\sqrt{x^2}dx^{123456}$ and $d\omega_1$ and thus they preserve the unique bivector field $\xi$ satisfying the equation $\iota_\xi\Omega=d\omega_1$. Moreover, any diffeomorphism preserving $\omega$ also has to preserve or revert the sign of $\iota_\xi\iota_\xi\Omega\in \Omega^2(M^{>0})$. \\
	
	\noindent In our case
	\[
	\xi~~~=~~~\frac{1}{8x^2}\frac{\partial}{\partial x^3}\wedge\frac{\partial}{\partial x^6}~~~+~~~~\frac{1}{8(x^2)^2}\frac{\partial}{\partial x^4}\wedge\frac{\partial}{\partial x^5}
	\] 
	and hence 
	\[
	\iota_\xi\iota_\xi\Omega=\iota_\xi d\omega_1=\frac{1}{16\sqrt{x^2}^5}dx^1\wedge dx^2
	\]
	So, in our case, $	\iota_\xi\iota_\xi\Omega$ is closed, hence its kernel yields a foliation preserved by the multisymplectic diffeomorphisms of $\omega$. This foliation does not depend on the possible sign ambiguities from above. {So, the multisymplectic diffeomorphisms of $\omega$ do not act 2-transitively on $M^{>0}$. However, as it turns out they do act 1-transitively, as we will now see}. For 1-transitivity it suffices to check by a direct comutation, that $\mathfrak X(M^{>0},\omega)$, includes the complete vector fields:
	\[
	\frac{\partial}{\partial x^i},~i\neq 2 \text{ and } x^1\frac{\partial}{\partial x^1}+x^2\frac{\partial}{\partial x^2}- \frac{1}{2}x^4\frac{\partial}{\partial x^4}+ \frac{1}{2}x^5\frac{\partial}{\partial x^5}-x^6\frac{\partial}{\partial x^6}.
	\]
\end{proof}

\noindent In the case of simple Lie groups the proof of Theorem \ref{liegrp} implies the following

\begin{proposition}\label{pdiff1}
		Let $(G,\omega)$ be a compact real simple Lie group with its canonical three-form, as described in Example \ref{lie}. Then $\text{Diff}(M,\omega)$ acts 1-transitively on $G$. However, it acts $2$-transitively if and only if $dim(G)=3$.
\end{proposition}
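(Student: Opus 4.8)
The plan is to leverage two facts established in the proof of Theorem \ref{liegrp}: for a compact simple Lie group $G$ of dimension $>3$, one has $\mathrm{Aut}(\mathfrak g,\omega_e)=\mathrm{Aut}(\mathfrak g,[\cdot,\cdot])\subset \mathrm{Aut}(\mathfrak g,\langle\cdot,\cdot\rangle)$, and the canonical left-invariant metric $h$ on $G$ has non-zero curvature. First I would establish 1-transitivity: since $\omega$ is left-invariant (indeed bi-invariant), every left translation $L_g$ is a multisymplectomorphism, and $G$ acts transitively on itself, so $\mathrm{Diff}(G,\omega)\supset\{L_g\mid g\in G\}$ already acts transitively. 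This settles the easy half, and also handles the $\dim G=3$ case: there $\omega$ is a bi-invariant volume form, and by Theorem \ref{moser} (Moser) the volume-preserving diffeomorphism group of the connected manifold $G$ is $k$-transitive for all $k$, in particular $2$-transitive.

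The substance is showing that for $\dim G>3$ the action is \emph{not} $2$-transitive. The idea is that a multisymplectomorphism $\varphi$ must preserve the canonical left-invariant metric $h$, hence is a Riemannian isometry of $(G,h)$; since $h$ has non-constant (non-zero) curvature one cannot map an arbitrary pair of points to an arbitrary pair at the ``same separation'', obstructing $2$-transitivity. To make this precise I would argue pointwise: fix $g\in G$ and a multisymplectomorphism $\varphi$ with $\varphi(g)=g'$. The linear map $\Phi_g:=\theta^L_{g'}\circ(T_g\varphi)\circ(\theta^L_g)^{-1}:\mathfrak g\to\mathfrak g$ satisfies $\Phi_g^*\omega_e=\omega_e$ (because $\omega$ is left-invariant and $\varphi$ preserves $\omega$), so $\Phi_g\in\mathrm{Aut}(\mathfrak g,\omega_e)\subset\mathrm{Aut}(\mathfrak g,\langle\cdot,\cdot\rangle)$, whence $\Phi_g$ preserves $h_e=-\langle\cdot,\cdot\rangle$. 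Translating back via the Maurer--Cartan form gives $(T_g\varphi)^*h_{g'}=h_g$, exactly as in the proof of Theorem \ref{liegrp}; thus $\varphi$ is an isometry of $(G,h)$.

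Given that $\mathrm{Diff}(G,\omega)$ consists of isometries of $(G,h)$, I would then invoke a standard Riemannian fact: an isometry of a manifold is determined by its $1$-jet at a point, and isometries preserve all curvature invariants. Concretely, pick the scalar curvature function or, better, a suitable smooth invariant $F:G\to\mathbb R$ built from the curvature tensor that is \emph{not} locally constant — here one can even take $F$ constant (by homogeneity) but with the curvature tensor itself non-parallel, so that the invariant distinguishing pairs is a function of two points, e.g. the value of the curvature operator evaluated along the geodesic connecting them, or simply: an isometry carrying $p_1\mapsto q_1$ and $p_2\mapsto q_2$ must preserve the Riemannian distance $d(p_1,p_2)=d(q_1,q_2)$. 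Hence choosing $p_1,p_2,q_1,q_2$ with $d(p_1,p_2)\neq d(q_1,q_2)$ — possible since $G$ is not a single point and distances take a continuum of values — shows no such $\varphi$ exists, so the action is not $2$-transitive.

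The main obstacle is making the curvature/isometry obstruction rigorous and clean: one must either (a) cite that $\mathrm{Diff}(G,\omega)$ embeds into $\mathrm{Isom}(G,h)$ and use preservation of the distance function — which is the shortest route and I would take it — or (b) if one wants the stronger flavour matching Theorem \ref{liegrp}'s use of non-zero curvature, argue that a $2$-transitive isometry group would force constant curvature (two-point homogeneity), contradicting the non-vanishing, non-constant Riemann tensor of the canonical metric on a compact simple group of dimension $>3$; for this one should recall that the canonical bi-invariant metric on such $G$ is Einstein but not of constant sectional curvature (its sectional curvatures $\tfrac14\|[X,Y]\|^2$ vanish on commuting pairs and are positive otherwise, so they are genuinely non-constant once $\mathrm{rank}\,G\geq 1$ and $\dim G>3$). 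Route (a) is self-contained given the distance-preservation of isometries, so I would present that and merely remark on (b).
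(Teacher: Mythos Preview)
Your argument is correct, and shares the key step with the paper: both establish $\mathrm{Diff}(G,\omega)\subset\mathrm{Isom}(G,h)$ by the pointwise computation $\theta^L_{g'}\circ T_g\varphi\circ(\theta^L_g)^{-1}\in\mathrm{Aut}(\mathfrak g,\omega_e)\subset\mathrm{Aut}(\mathfrak g,\langle\cdot,\cdot\rangle)$ from Theorem \ref{liegrp}. The difference lies in how non-2-transitivity is then extracted. You take the most direct route: isometries preserve the Riemannian distance, so two pairs of points at different distances cannot be mapped to one another. The paper instead identifies the isometry group explicitly, citing that for a compact connected simple Lie group with bi-invariant metric one has $\mathrm{Diff}_0(G,h)=(G\times G)/Z(G)$ acting by $(g_1,g_2)\cdot x=g_1xg_2^{-1}$; since this action preserves the bi-invariant form $\omega$, one gets the equality $\mathrm{Diff}_0(G,\omega)=\mathrm{Diff}_0(G,h)$, and then observes that this finite-dimensional group acts 1- but not 2-transitively (the full group differing from the identity component only by a discrete factor). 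Your argument is more elementary and self-contained; the paper's buys the stronger conclusion that the identity component of the multisymplectomorphism group is precisely $(G\times G)/Z(G)$. One cosmetic point: your detour through pointwise curvature invariants is a dead end, as you yourself note (homogeneity makes them constant), so you could drop that paragraph and go straight to the distance argument.
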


\begin{proof}
	For the three-dimensional case, the statement follows from Theorem \ref{moser}. For all other dimensions the statement $Aut(\mathfrak g,\omega_e )\subset Aut(\mathfrak g,\langle\cdot,\cdot\rangle)$ from the proof of Theorem \ref{liegrp} implies, that $\text{Diff}(M,\omega)\subset \text{Diff}(M,h)$ (using the left invariance of both tensors). Especially the connected components of the identity satisfy $\text{Diff}_0(M,\omega)\subset \text{Diff}_0(M,h)$. On the other hand $\text{Diff}_0(M,h)=(G\times G)/Z(G)$ (cf. eg. \cite{MR0412354}), acting by $(l_{g_1},r_{g_{2}}^{-1})$, which clearly preserves the biinvariant form $\omega$. Thus 
	\[\text{Diff}_0(M,\omega)= \text{Diff}_0(M,h).\]
	The statement now follows, because $\text{Diff}(M,\omega)/\text{Diff}_0(M,\omega)$ is discrete and $\text{Diff}_0(M,h)$ acts 1-transitively but not 2-transitively.
\end{proof}
\begin{remark}
	Partial results in this direction have been described in \cite{MR3267563}. We also note that $\text{Diff}(M,\omega)\neq \text{Diff}(M,h)$. For the inversion diffeomorphism $\phi:G\to G, \phi(g)=g^{-1}$, we have $T_e \phi(X)=-X$, so $\phi^*h=h$, but $\phi^*\omega=-\omega$.
\end{remark}

\subsection{Non-transitive cases}

A simple necessary criterion for 1-transitivity of a multisymplectic diffeomorphism group is constant linear type. Two areas, where the multisymplectic form is not of the same linear type can not be multisymplectomorphic.  

\begin{example}
	In Example \ref{ex-r6-linear-vary} the spaces $\{x^2<0\}$, $\{x^2=0\}$ and $\{x^2>0\}$ are preserved by (local) multisymplectic diffeomorphisms. Especially, the group $\text{Diff}(M,\omega)$ does not act transitively on $M$.
\end{example}

\noindent Now we will build a compact version of the above example.
\begin{example}[Compact]
	We set $f=sin(2\pi x^2)$. Then the can regard the quotient multisymplectic manifold $M=\mathbb R^6/\mathbb Z^6$ with the form induced by the $\omega$ above, which we call $\tilde \omega$. Again $(M,\tilde \omega)$ does not have constant linear type. Consequently it does not satisfy the Darboux property and the group $\text{Diff}(M,\tilde \omega)$ does not act transitively on $M$.
\end{example}
Using the fact that having the Darboux property is preserved by multisymplectic diffeomorphisms, we can build an example of a multisymplectic manifold of constant linear type, on which the local multisymplectic diffeomorphisms do not act transitively. 
\begin{proposition}[Constant linear type]\label{exclt}
	Let $M^{>0}=\{(x^1,x^2,x^3,x^4,x^5,x^6)\in \mathbb R^6~|~x^2>0\}$, $f:\mathbb R^{>0}\to \mathbb R^{>0}$ be a smooth function satisfying $f|_{]0,1]}=1$ and $f(t)=t$ for $t\geq 2$ and $\omega^f=  dx^{135} -dx^{146}-dx^{236} + f(x^2)\cdot dx^{245}$. Then $(M^{>0},\omega^f)$ is multisymplectic and of constant linear type, but the group of multisymplectic diffeomorphisms does not act transitively on it.
\end{proposition}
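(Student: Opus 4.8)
The plan is to exhibit two open subsets of $M^{>0}$ on which $\omega^f$ is flat everywhere and flat nowhere respectively, and then to use that flatness near a point is invariant under local multisymplectic diffeomorphisms. To begin, since $f$ depends only on $x^2$ and $x^2>0$ throughout $M^{>0}$, Lemma \ref{lem:2pl} applies and gives that $\omega^f$ is non-degenerate (hence multisymplectic); moreover $f(x^2)>0$ everywhere, so the same lemma shows that $\omega^f_x$ has linear type $(i)$ at every point $x\in M^{>0}$, whence the linear type is constant.

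Next I would isolate the two regions. On the open set $V_0:=\{x\in M^{>0}\mid 0<x^2<1\}$ we have $f(x^2)=1$ by hypothesis, so $\omega^f|_{V_0}=dx^{135}-dx^{146}-dx^{236}+dx^{245}$ is a constant-coefficient form; hence $(V_0,\omega^f|_{V_0})$ is flat, a translate of the identity chart serving as a flat chart near any point of $V_0$. On the open set $V_\infty:=\{x\in M^{>0}\mid x^2>2\}$ we have $f(x^2)=x^2$, so $\omega^f|_{V_\infty}$ is precisely the restriction of the form analyzed in Example \ref{nflat}. The computation made there, yielding $d\omega_1=\frac{1}{4\sqrt{x^2}}dx^{1245}+\frac{1}{4\sqrt{x^2}^3}dx^{1236}$, which is non-zero on every nonempty open subset of $M^{>0}$, together with Theorem \ref{thm:2pl6}(i), shows that $\omega^f$ is flat at no point of $V_\infty$.

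Finally I would record the invariance step: if $\varphi$ is a local multisymplectic diffeomorphism of $(M^{>0},\omega^f)$ and $\phi$ is a flat chart near $p$ (so $\phi(p)=0$ and $\phi^*\omega^f_p=\omega^f$), then $(\varphi_*)_p\circ\phi\circ\varphi^{-1}$ is a flat chart near $\varphi(p)$, using $(\varphi^{-1})^*\omega^f=\omega^f$ and $(\varphi_*)_p^*\omega^f_{\varphi(p)}=\omega^f_p$. Consequently no element of $\text{Diff}(M^{>0},\omega^f)$ can send a point of $V_0$ to a point of $V_\infty$, so the action is not transitive. The only mildly delicate point is this last invariance observation (that ``flat near a point'' is a multisymplectomorphism invariant); everything else is a direct appeal to Lemma \ref{lem:2pl}, Theorem \ref{thm:2pl6}, and the explicit non-flatness computation of Example \ref{nflat}.
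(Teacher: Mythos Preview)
Your proof is correct and follows essentially the same approach as the paper: identify the flat region $\{0<x^2<1\}$ and the nowhere-flat region $\{x^2>2\}$ via Lemma \ref{lem:2pl}, Theorem \ref{thm:2pl6}, and Example \ref{nflat}, then conclude non-transitivity from the invariance of flatness under multisymplectic diffeomorphisms. You supply more detail than the paper (the explicit verification of constant linear type and the invariance argument), but the argument is the same.
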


\begin{proof}
		The form $\omega^f$ is flat on $\{ x\in M^{>0}~|~x^2\in]0,1[\}$ and non-flat on$\{ x\in M^{>0}~|~x^2>2\}$ by Theorem \ref{thm:2pl6}, Lemma \ref{lem:2pl} and Example \ref{nflat}. 
		As a flat subset can not be equivalent to a non-flat one, this means that $\text{Diff}(M,\omega)$ does not act transitively on $M^{>0}$.
\end{proof}

\section{Observables and Symmetries}
In this section we generalize the notions of an observable and of an (infinitesimal) symmetry from symplectic to multisymplectic geometry, following the ideas of Baez-Rogers for the first and Callies-Fregier-Rogers-Zambon for the second notion (see \cite{MR2892558} and \cite{MR3552542}) The results of this section are either published, or - in the case of the third subsection - can be found on the arXiv preprint server and will be published with full details elsewhere. Thus we do not give proofs here but concentrate on explaining the theory and giving examples. Since the notions treated in this section seem to become central in multisymplectic geometry we felt obliged to, at least, report on them.

\subsection{The Lie $\infty$-algebra of observables}\label{obs1}
One of the key features of a symplectic form $\omega$ on a manifold $M$, is the Lie algebra structure $\{\cdot,\cdot\}_\omega$ it induces on $C^\infty(M)$. The bracket of two functions $f_1,f_2$ is defined by $\{f_1,f_2\}_\omega=\omega(X_{f_1},X_{f_2})$, where $X_{f_i}$ is the unique vector fields satisfying $\iota_{X_{f_i}}\omega=-df$. Trying to generalize the equation defining $X_{f_i}$ to $n$-plectic manifolds with $n>1$, one has to either turn $X_{f_i}$ into multivector fields or to concentrate on differential forms $f_i$ of degree $n{-}1$. Following Baez and Rogers, we choose the latter approach here but observe new subtleties: In general, neither do all $n{-}1$-forms $\alpha$ admit a vector field $X_\alpha$ satisfying the HDW equation $ \iota_{X_\alpha}\omega=-d\alpha$, nor do those admitting such a vector field form a Lie algebra. However, they do form a Lie $\infty$-algebra, cf. \cite{MR2892558, 9783658123901}. 
\begin{definition}
	Let $(M,\omega)$ be an $n$-plectic manifold. We define the  ``Lie $n$-algebra of observables'' $(L_\infty(M,\omega),\{l_k\}_{k\in\{1,...,n+1\}})$ as follows. As a graded vector space it is given by
	\[
	L_\infty(M,\omega)=\bigoplus_{i=0}^{n-2}\Omega^i(M)\oplus \Omega^{n{-}1}_{Ham}(M,\omega),
	\]
	where 
	\[\Omega^{n{-}1}_{Ham}(M,\omega)=\{\alpha~|~d\alpha=-\iota_{X_\alpha}\omega \text{ for some }X_\alpha\in\mathfrak X(M)\}\subset \Omega^{n{-}1}(M).\]
	We turn $L_\infty(M,\omega)$ into a differential graded vector space with differential $l_1=d$ on $\bigoplus_{i=0}^{n-2}\Omega^i(M)$ and differential $l_1=0$ on  $\Omega^{n{-}1}_{Ham}(M,\omega)$. Furthermore, for $1<k\leq n+1$, we introduce maps \[l_k:\Lambda^k\Omega^{n{-}1}_{Ham}(M,\omega)\to L_\infty(M,\omega),\]
	\[l_k(\alpha_1,...,\alpha_{k})=-(-1)^{k(k+1)/2}\iota_{X_{\alpha_k}}...\iota_{X_{\alpha_{1}}}\omega,\]
	where $d\alpha_{i}=-\iota_{X_{\alpha_i}}\omega$. We extend them to operations $\Lambda^kL_\infty(M,\omega)\to L_\infty(M,\omega)$ trivially (i.e. by zero).
\end{definition}
\begin{remark}\label{linfty-properties}
	The operations $\{l_k\}_{k\in\{1,...,n+1\}}$ satisfy the relations
	\[\partial l_k=l_1l_{k+1},\]
for $1<k<n+2$, where $l_{n+2}$ should be interpreted as the zero map. These relations show that $(L_\infty(M,\omega),\{l_k\}_{k\in\{1,...,n+1\}})$ is a Lie $\infty$-algebra, cf. e.g. \cite{9783658123901}. Here $\partial$ denotes the Chevalley-Eilenberg-operator given by
\[
(\partial l_k)(\alpha_1,...,\alpha_{k+1})=\sum_{i<j}(-1)^{i+j}l_k(l_2(\alpha_i,\alpha_j), \alpha_1 ,...,\widehat{\alpha_i},...,\widehat{\alpha_j},...,\alpha_{k+1}),
\] 
where $\widehat \alpha_i$ means that $\alpha_i$ is left out. This operator is defined for any skew-symmetric map with domain a Lie $\infty$-algebra (especially a Lie algebra).
\end{remark}

\begin{example}[Symplectic forms]
	Let $(M,\omega)$ be a 1-plectic (i.e. symplectic) manifold. Then $L_\infty(M,\omega)=C^\infty(M)$, $l_1=0$ and $l_2=\{\cdot,\cdot\}$ is the classical Poisson multiplication of functions on a symplectic manifold.
\end{example}

\begin{example}[Volumes]
	We regard $\mathbb R^n$ with $n\geq 3$ with the standard volume form $\omega=dx^1\wedge ...\wedge dx^n$ as an $n{-}1$-plectic manifold and describe its $l_2$ operation. Let $\alpha,\tilde \alpha$ be $(n{-}2)$-forms. They can be written as follows:
	\[
	\alpha=\sum_{i<j}f_{ij}\iota_{\frac{\partial}{\partial x^j}}\iota_{\frac{\partial}{\partial x^i}}\omega,~~~
	\tilde \alpha=\sum_{i<j}\tilde f_{ij}\iota_{\frac{\partial}{\partial x^j}}\iota_{\frac{\partial}{\partial x^i}}\omega.
	\]
	Hence, we have
	\[d\alpha=-\sum_{i<j}\left(
	\frac{\partial f_{ij}}{\partial x^i}
	\iota_{\frac{\partial}{\partial x^j}}\omega -
	\frac{\partial f_{ij}}{\partial x^j}
\iota_{\frac{\partial}{\partial x^i}}\omega
	\right),~~~~
	X_\alpha=\sum_{i<j}\left(
	\frac{\partial f_{ij}}{\partial x^i}
	{\frac{\partial}{\partial x^j}}-
	\frac{\partial f_{ij}}{\partial x^j}
	{\frac{\partial}{\partial x^i}}
	\right).
	\]

\noindent Setting $f_{ji}=-f_{ij}$ this can be rewritten to 
\[	X_\alpha=\sum_{j}\left(\sum_{k\neq j} 
	\frac{\partial f_{kj}}{\partial x^k}
	\right){\frac{\partial}{\partial x^j}}.
\]

\noindent Consequently, we have
\[
l_2(\alpha,{\tilde \alpha})=\sum_{i<j}\left(
\left(\sum_{k\neq j}  \frac{\partial \tilde f_{kj}}{\partial x^k} \right)
\left(\sum_{l\neq i}  \frac{\partial f_{li}}{\partial x^l} \right)
-
\left(\sum_{l\neq i}  \frac{\partial \tilde f_{li}}{\partial x^l} \right)
\left(\sum_{k\neq j}  \frac{\partial  f_{kj}}{\partial x^k} \right)
\right)\iota_{\frac{\partial}{\partial x^j}}\iota_{\frac{\partial}{\partial x^i}}\omega.
\]
As a consequence of the Darboux theorem for volume forms, (the binary operation of) the observable Lie $(n{-}1)$-algebra of any $(n{-}1)$-plectic $n$-dimensional manifold locally has this form.
\end{example}

\begin{example}[Sums]\label{linfy-sums}
	Let $(M,\omega)$ and $(\tilde M,\tilde \omega)$ be $(n{-}1)$-plectic. There is a (strict) morphism of Lie $\infty$-algebras
	\[L_\infty(M,\omega)\oplus L_\infty(\tilde M,\tilde \omega) \to  L_\infty(M\times \tilde M,\pi_M^*\omega + \pi_{\tilde M}^*\tilde\omega),\]
	given by
	\[
	(\alpha, \tilde \alpha)\mapsto \pi_M^*\alpha + \pi_{\tilde M}^*\tilde\alpha.
	\]
\end{example}

\begin{example}[Products]\label{linftyprod}Given an $n$-plectic manifold $(M,\omega)$ and an $m$-plectic manifold $(\tilde M,\tilde \omega)$ of not necessarily equal degrees, there is a morphism of Lie $\infty$-algebras 
	\[L_\infty(M,\omega)\oplus L_\infty(\tilde M,\tilde \omega) \to  L_\infty(M\times \tilde M,\pi_M^*\omega \wedge \pi_{\tilde M}^*\tilde\omega).\]
constructed in \cite{MR3487554}. It is an extension of the linear map
\[\Omega^{n{-}1}_{Ham}(M\omega)\oplus \Omega^{m-1}_{Ham}(\tilde M,\tilde \omega)\to  \Omega^{n+m}_{Ham}(M\times \tilde M,\pi_M^*\omega \wedge \pi_{\tilde M}^*\tilde\omega),  \]
\[
(\alpha,\tilde \alpha)\mapsto \pi_{ M}^*\alpha\wedge \pi_{\tilde M}^*\tilde \omega + \pi_{ M}^*\omega\wedge \pi_{\tilde M}^*\tilde \alpha.
\]
Unlike the previous case, in general this morphism has ``higher'' components of the type 
	\[\Lambda^k\left(L_\infty(M,\omega)\oplus L_\infty(\tilde M,\tilde \omega)\right) \to  L_\infty(M\times \tilde M,\pi_M^*\omega \wedge \pi_{\tilde M}^*\tilde\omega)\]
also for $k>1$.
\end{example}

\begin{example}[Compact simple Lie groups]
	In the case of connected compact simple Lie groups, we can get a feeling for $L_\infty(G,\omega)$ by regarding the sub-Lie $\infty$-algebra of left-invariant differential forms:
	\[
	L_\infty(G,\omega)^G= \xymatrix{C^\infty(M)^G\ar[r]\ar[d]^\cong& \Omega^1_{Ham}(G,\omega)^G \ar[d]^\cong\\ \mathbb R\ar[r]^0& \mathfrak g^*}. 
	\]
	Identifying $\mathfrak g^*$ with $\mathfrak g$ by use of the Killing form, we can interpret the operations $l_2$ and $l_3$ as follows.
	\[
	l_2:\Lambda^2\mathfrak g \to \mathfrak g,~~l_2(X,Y)=[X,Y],
	\]
	\[
	l_3:\Lambda^3\mathfrak g \to \mathbb R,~~l_3(X,Y,Z)=-\langle X,[Y,Z]\rangle.
	\]
	Thus, $L_\infty(G,\omega)^G\cong (\mathbb R\overset{l_1}\to{\mathfrak g}, l_1=0, l_2=[\cdot, \cdot], l_3= -\langle \cdot ,[\cdot ,\cdot]\rangle)$.
\end{example}

\begin{example}[Abelian $L_\infty$-algebra]In \cite{MR3156115} a 2-plectic 7-manifold with no non-trivial Hamiltonian vector fields is constructed. Thus, $ \Omega^{n{-}1}_{Ham}(M,\omega)=\Omega^{n{-}1}_{cl}(M)$, $l_2=0$ and $l_3=0$. 
	
\end{example}

\subsection{Comoment maps}\label{obs2}
Let $(M,\omega)$ be an $n$-plectic manifold. There is a linear map $L_\infty(M,\omega)\to \mathfrak X(M)$ which maps the binary operation $l_2$ to the Lie bracket of vector fields. It is given by $\alpha\mapsto X_\alpha$ (the unique vector field satisfying the HDW equation $d\alpha=-\iota_{X_\alpha}\omega $) on $\Omega^{n{-}1}_{Ham}(M)$ and zero on all forms of lower degrees. In the language of Lie $\infty$-algebras it is a Lie $\infty$-morphism. Now, given a Lie algebra action $\zeta:\mathfrak g\to \mathfrak X(M)$ (we assume $\zeta$ to be a Lie algebra homomorphism i.e. an infinitesimal right action), we may ask, whether there is a Lie $\infty$-morphism $F:\mathfrak g\to L_\infty(M,\omega)$  lifting this action, i.e. making the following diagram of Lie $\infty$-algebras commute (cf. \cite{MR3552542}).
\[
\xymatrix{
	&L_\infty(M,\omega)\ar[d]\\
	\mathfrak g\ar@{-->}[ur]^{F}\ar[r]^{\zeta}& \mathfrak X(M)
	}
\]
To answer this question, we will first describe the properties a Lie $\infty$-morphism from a Lie algebra to $L_\infty(M,\omega)$ has to satisfy.

\begin{lemma}[\cite{MR3552542,9783658123901}]
	Let $(M,\omega)$ be $n$-plectic and $\mathfrak g$ a Lie algebra. A Lie $\infty$-morphism $F:\mathfrak g\to L_\infty(M,\omega)$ is given by a family of skew-symmetric maps $\{f_i\}_{i=1,...,n}$, where 
	\[
	f_1:\mathfrak g\to \Omega^{n{-}1}_{Ham}(M,\omega),
	\]
	\[
	f_i:\Lambda^i \mathfrak g\to \Omega^{n-i}(M),\text{ for }i>1,
	\]
	satisfying the conditions
	\[
	\partial f_i + l_1 f_{i+1}=-f^*_1l_{i+1}
	\]
	for $i\in\{1,...,n\}$, where  $f_{n+1}$  should be interpreted as the zero map, $\partial$ is the Chevalley-Eilenberg-operator from Remark \ref{linfty-properties} and $f_1^*l_{i+1}$ is given by the pullback formula
	\[
	f_1^*l_{i+1}(\xi_1,...,\xi_{i+1})=l_{i+1}(f_1(\xi_1),...,f_1(\xi_{i+1}).
	\]
\end{lemma}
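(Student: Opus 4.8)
The plan is to unwind the general definition of an $L_\infty$-morphism in the case where the source is an ordinary Lie algebra. Recall that, after the usual d\'ecalage, an $L_\infty$-morphism $F\colon \mathfrak g\to L_\infty(M,\omega)$ is the same thing as a coalgebra morphism between the cofree conilpotent cocommutative coalgebras cogenerated by $\mathfrak g[1]$ and by $L_\infty(M,\omega)[1]$; component-wise this is a family of skew-symmetric multilinear maps $F_k\colon \Lambda^k\mathfrak g\to L_\infty(M,\omega)$ of degree $1-k$, subject to one quadratic coherence relation for each $m\ge 1$ (see e.g. \cite{9783658123901}, or \cite{MR3552542} for the multisymplectic specialization). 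First I would do the degree bookkeeping, setting $f_k:=F_k$. In the grading of $L_\infty(M,\omega)$ the space $\Omega^{n{-}1}_{Ham}(M,\omega)$ sits in degree $0$ and $\Omega^i(M)$ in degree $i-(n{-}1)$; hence the degree-$(1{-}k)$ part of $L_\infty(M,\omega)$ is $\Omega^{n{-}1}_{Ham}(M,\omega)$ for $k=1$, it is $\Omega^{n-k}(M)$ for $2\le k\le n$, and it is zero for $k>n$. So $f_1$ automatically takes values in $\Omega^{n{-}1}_{Ham}(M,\omega)$ (which is exactly the condition making the expressions $l_j(f_1(\xi_1),\dots,f_1(\xi_j))$ meaningful), $f_i\colon\Lambda^i\mathfrak g\to\Omega^{n-i}(M)$ for $2\le i\le n$, and $f_{n+1}$ together with all higher components is forced to vanish. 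This produces precisely the data $\{f_i\}_{i=1,\dots,n}$ of the statement.

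Next I would reduce the coherence relations. On the ``brackets-after-$F$'' side of the relation at level $m$ one meets the term $l_1^L\circ F_m$ and terms $l_p^L\big(F_{k_1}(\cdots),\dots,F_{k_p}(\cdots)\big)$ with $p\ge 2$ and $k_1+\dots+k_p=m$. By construction $l_p^L$ for $p\ge 2$ is the extension by zero of a map defined only on $p$ arguments of degree $0$ (it is a contraction of $\omega$ with $p$ Hamiltonian vector fields), and $F_1$ is the unique component with values in degree $0$; hence such a term survives only when $k_1=\dots=k_p=1$, which forces $p=m$ and gives the contribution $l_m^L(f_1(\xi_1),\dots,f_1(\xi_m))=(f_1^\ast l_m)(\xi_1,\dots,\xi_m)$. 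The term $l_1^L\circ F_m$ equals $d\circ f_m$ (for $2\le m\le n+1$; here $l_1^L=d$ because $f_m$ has values in $\Omega^{n-m}\subset\bigoplus_{i\le n-2}\Omega^i$, and for $m>n+1$ everything vanishes trivially). On the ``$F$-after-brackets'' side only $l_2^{\mathfrak g}=[\cdot,\cdot]$ contributes, since $l_1^{\mathfrak g}=0$; the surviving terms are $F_{m-1}$ precomposed with $l_2^{\mathfrak g}$ applied to a single pair of arguments, and summing over all pairs with the Koszul signs gives exactly the Chevalley--Eilenberg coboundary $(\partial f_{m-1})(\xi_1,\dots,\xi_m)$, with $\partial$ normalized as in Remark \ref{linfty-properties}. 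Collecting terms and writing $i=m-1$ yields
\[
\partial f_i + l_1 f_{i+1} = -\,f_1^\ast l_{i+1},\qquad i=1,\dots,n,
\]
with $f_{n+1}=0$; the relation at $m=1$ is vacuous ($l_1^L$ vanishes on $\Omega^{n{-}1}_{Ham}$), and those at $m>n+1$ read $0=0$. Running the same computation backwards shows that any family $\{f_i\}_{i=1,\dots,n}$ of skew-symmetric maps of the indicated types satisfying these identities assembles into an $L_\infty$-morphism, so the two descriptions coincide.

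The only genuine work — and hence the main obstacle — is sign bookkeeping: one must fix a single convention for the d\'ecalage and for the graded symmetrization occurring in the general coherence relations, and then check that all Koszul signs collapse exactly (i) into the normalization of $\partial$ used in Remark \ref{linfty-properties}, (ii) into the overall minus sign on the right-hand side, and (iii) into the relative sign between $\partial f_i$ and $l_1 f_{i+1}$. There is nothing conceptually subtle here; in fact one may instead simply quote the component description of $L_\infty$-morphisms from \cite{9783658123901} and specialize it to a Lie-algebra source, the only extra input being the degree-$0$ vanishing of $l_p^L$ for $p\ge 2$, which is immediate from their definition as contractions of $\omega$.
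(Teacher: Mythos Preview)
The paper does not prove this lemma at all: it is stated with citations to \cite{MR3552542,9783658123901}, and the introduction to Section~6 explicitly says that results in this section are quoted without proof. So there is no ``paper's own proof'' to compare against.

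That said, your argument is the standard and correct one, and is essentially what one finds in the cited sources. The two structural observations you isolate are exactly the right ones: (a) since $\mathfrak g$ is concentrated in degree $0$, the degree-$(1{-}k)$ requirement on $F_k$ pins down its target as $\Omega^{n-k}(M)$ (resp.\ $\Omega^{n-1}_{Ham}$ for $k=1$) and kills $F_k$ for $k>n$; and (b) since the higher brackets $l_p^L$ for $p\ge 2$ are extended by zero off the degree-$0$ part, the only surviving ``brackets-after-$F$'' term with $p\ge 2$ is the one with all $k_i=1$, while on the source side only $l_2^{\mathfrak g}$ contributes and produces the Chevalley--Eilenberg operator $\partial$. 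Your honest remark that the residual work is purely sign bookkeeping is accurate; with the normalizations of $l_k$ and $\partial$ fixed in Remark~\ref{linfty-properties}, the signs do collapse to $\partial f_i + l_1 f_{i+1} = -f_1^\ast l_{i+1}$, but one really does have to pick a single d\'ecalage convention and carry it through rather than assert it.
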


\begin{definition}
	Let $\zeta:\mathfrak g\to \mathfrak X(M)$ be a Lie algebra homomorphism and $(M,\omega)$ an $n$-plectic manifold. A ``(homotopy) comoment'' for $\zeta$ is a Lie $\infty$-morphism $F=\{f_i\}_{i=1,...,n}$ from $\mathfrak g$ to $L_\infty(M,\omega)$ satisfying $df_1(\xi)=-\iota_{\zeta(\xi)}\omega$, i.e. making the diagram above commute. A comoment for a Lie group action $\phi:M\times G\to M$ is defined as a comoment of its corresponding infinitesimal action. 
\end{definition}

\begin{theorem}[\cite{MR3552542,MR3353735}]\label{comthm}
	Let $(M,\omega)$ be $n$-plectic and $\zeta:\mathfrak g\to \mathfrak X(M)$ be a Lie algebra homomorphism.
	\begin{enumerate}
		\item A comoment for $\zeta$ can not exist unless $L_{\zeta(\xi)}\omega=0$ for all $\xi\in \mathfrak g$, i.e. unless $\zeta$ preserves $\omega$.
		\item Assume $\zeta$ preserves $\omega$. Then for $i\in\{1,...,n+1\}$ the maps $\Lambda^i\mathfrak g\to \Omega^{n+1-i}(M)$ $(\xi_1,...,\xi_i)\mapsto \iota_{\zeta(\xi_i)}...\iota_{\zeta(\xi_1)}\omega$ induce well-defined elements $g_i$ of $H^i(\mathfrak g)\otimes H^{n+1-i}_{dR}(M)$, where $H^*(\mathfrak g)$ is the Lie algebra cohomology of $\mathfrak g$ and $ H^{*}_{dR}(M)$ is the de Rham cohomology of $M$.
		\item A comoment for $\zeta$ exists if and only if $g_i=0$ for $i\in \{1,...,n{+}1\}$.
		\item Especially, if $\omega$ admits a $\zeta$-invariant potential $\eta$, then it has a comoment given by the formulas
		\[
		f_k(\xi_1,...,\xi_k)=(-1)^{k}(-1)^{\frac{k(k+1)}{2}}\iota_{\zeta(\xi_k)}...\iota_{\zeta(\xi_1)} \eta, ~~k\in\{1,...,n\}.
		\]
	\end{enumerate} 
\end{theorem}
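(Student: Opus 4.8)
The plan is to reformulate the problem homologically: under the invariance hypothesis, a homotopy comoment for $\zeta$ is exactly a primitive, in a bicomplex built from $\omega$ and $\zeta$, of a canonically defined cocycle, and the $g_i$ are the K\"unneth components of the class of that cocycle. Statement (1) I would dispatch immediately: if $F=\{f_i\}$ is a comoment then by definition $\iota_{\zeta(\xi)}\omega=-df_1(\xi)$, so Cartan's formula gives $L_{\zeta(\xi)}\omega=d\iota_{\zeta(\xi)}\omega+\iota_{\zeta(\xi)}d\omega=-d^2f_1(\xi)+0=0$.

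For the rest I would introduce, for $0\le k\le n+2$, the cochains
\[
\mu_k\colon\Lambda^k\mathfrak g\to\Omega^{n+1-k}(M),\qquad \mu_k(\xi_1,\ldots,\xi_k)=\iota_{\zeta(\xi_k)}\cdots\iota_{\zeta(\xi_1)}\omega,
\]
so $\mu_0=\omega$ and $\mu_{n+2}=0$, and regard $\mu_k$ as an element of bidegree $(k,n+1-k)$ of the bicomplex $K^{p,q}=\mathrm{Hom}(\Lambda^p\mathfrak g,\Omega^q(M))=C^p(\mathfrak g;\mathbb R)\otimes\Omega^q(M)$ carrying the de Rham differential $d$ and the Chevalley--Eilenberg differential $\delta$ of $\mathfrak g$ \emph{with trivial coefficients}. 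The computational heart is the Cartan-calculus identity $d\mu_k=\pm\,\delta\mu_{k-1}$ for $1\le k\le n+1$ (the sign depending on $k$), obtained from the standard expansion of $d(\iota_{X_k}\cdots\iota_{X_1}\omega)$ into Lie-derivative terms $\iota_{X_k}\cdots\widehat{\iota_{X_i}}\cdots\iota_{X_1}(L_{X_i}\omega)$, commutator terms $\iota_{[X_i,X_j]}(\cdots)$ and a term proportional to $\iota_{X_k}\cdots\iota_{X_1}d\omega$: closedness of $\omega$ kills the last, invariance ($X_i=\zeta(\xi_i)$, $L_{\zeta(\xi_i)}\omega=0$ by (1)) kills the Lie-derivative terms, and the commutator terms reassemble, via $[\zeta(\xi_i),\zeta(\xi_j)]=\zeta([\xi_i,\xi_j])$, into $\pm\delta\mu_{k-1}$ --- it is precisely the disappearance of the Lie-derivative terms that makes the coefficients trivial. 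Since $d$ and $\delta$ commute, for the standard sign in the total differential $D$ the element $\mu:=\sum_{k=1}^{n+1}\mu_k$ is a $D$-cocycle of total degree $n+1$ (the top degree causing no trouble, $\mu_{n+2}=0$). As $C^\bullet(\mathfrak g)$ and $\Omega^\bullet(M)$ are concentrated in nonnegative degrees, the algebraic K\"unneth theorem over $\mathbb R$ gives $H^\bullet(\mathrm{Tot}\,K,D)\cong H^\bullet(\mathfrak g;\mathbb R)\otimes H^\bullet_{dR}(M)$; I would then define $g_i$ to be the component of $[\mu]\in H^{n+1}(\mathrm{Tot}\,K)$ in $H^i(\mathfrak g)\otimes H^{n+1-i}_{dR}(M)$ --- well-defined because $[\mu]$ is, the $\mu_k$ being canonical --- represented, after the usual zig-zag in the bicomplex, by the stated maps (note $\mu_i$ itself need not be $d$-closed for $i\ge 2$, e.g. $d\mu_2(\xi,\eta)=\pm\mu_1([\xi,\eta])$, which is exactly why the bicomplex is needed). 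This proves (2).

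For (3) I would unwind the preceding lemma: a comoment is a family $f_i\colon\Lambda^i\mathfrak g\to\Omega^{n-i}(M)$, $1\le i\le n$, with $df_1(\xi)=-\iota_{\zeta(\xi)}\omega$ and $\partial f_i+l_1f_{i+1}=-f_1^*l_{i+1}$. Since $l_1$ acts here as the de Rham differential, since $\partial$ is the trivial-coefficient Chevalley--Eilenberg operator of Remark \ref{linfty-properties}, and since $f_1^*l_{i+1}=\pm\mu_{i+1}$ (because $X_{f_1(\xi)}=\zeta(\xi)$), these equations reassemble, after tracking one overall sign, into the single bicomplex equation $D\bigl(\sum_{i=1}^{n}\pm f_i\bigr)=-\mu$, with $f_i$ of total degree $n$. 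Hence a comoment exists iff $\mu$ is $D$-exact, iff $[\mu]=0$, iff all $g_i=0$. For (4), given a $\zeta$-invariant potential $\eta$ (so $d\eta=\pm\omega$ and $L_{\zeta(\xi)}\eta=0$), I would run the same Cartan computation for $\nu_k(\xi_1,\ldots,\xi_k)=\iota_{\zeta(\xi_k)}\cdots\iota_{\zeta(\xi_1)}\eta$: invariance of $\eta$ again annihilates the Lie-derivative terms, while $d\eta=\pm\omega$ now produces a $\mu_k$-term, so that $\sum_k\pm\nu_k$ is an explicit $D$-primitive of $\mu$; matching with the $L_\infty$-sign conventions in the definition of $l_k$ yields precisely $f_k=(-1)^k(-1)^{k(k+1)/2}\iota_{\zeta(\xi_k)}\cdots\iota_{\zeta(\xi_1)}\eta$, and I would finish by verifying $\partial f_k+l_1f_{k+1}=-f_1^*l_{k+1}$ directly.

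The conceptual content is short; the real labour --- and the step I expect to be the main obstacle --- is the sign bookkeeping: establishing the Cartan ``big formula'' for $d(\iota_{X_k}\cdots\iota_{X_1}\omega)$ with correct signs, confirming that the surviving commutator terms genuinely constitute the trivial-coefficient differential $\delta$, and then reconciling the $(-1)^{k(k+1)/2}$ factors and the $\partial$/$l_1$ conventions of the preceding lemma with the sign chosen in the total differential $D$. By contrast the K\"unneth decomposition and statement (1) are routine.
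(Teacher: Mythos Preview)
The paper does not prove this theorem: it is stated with attribution to \cite{MR3552542,MR3353735}, and the introduction to Section 6 says explicitly that ``we do not give proofs here but concentrate on explaining the theory and giving examples.'' So there is no ``paper's own proof'' to compare against.

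That said, your proposal is correct and is precisely the argument of the cited references. The bicomplex $C^\bullet(\mathfrak g)\otimes\Omega^\bullet(M)$ with total differential built from $d$ and the trivial-coefficient Chevalley--Eilenberg differential, the identification of $\mu=\sum_k\mu_k$ as a total-degree-$(n{+}1)$ cocycle via the extended Cartan formula (where closedness of $\omega$ and $\zeta$-invariance kill the unwanted terms), the K\"unneth decomposition producing the classes $g_i$, and the reinterpretation of the comoment equations $\partial f_i+l_1f_{i+1}=-f_1^*l_{i+1}$ as $D(\sum\pm f_i)=-\mu$ --- this is exactly the machinery of \cite{MR3552542}, and the cohomological obstruction statement (3) in the form given here is the content of \cite{MR3353735}. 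Your diagnosis that the only nontrivial labour is sign bookkeeping is accurate; the conceptual skeleton you outline is complete and would go through once the signs are pinned down. One small caution: you should check that the element $\mu$ you write, starting at $k=1$, really has the same total cohomology class as the one starting at $k=0$ (i.e.\ including $\mu_0=\omega$); the $k=0$ piece is $d$-closed and contributes the $g_0$-component in $H^0(\mathfrak g)\otimes H^{n+1}_{dR}(M)$, which the statement of the theorem omits but which is automatically killed by any $f_1$ satisfying $df_1(\xi)=-\iota_{\zeta(\xi)}\omega$ together with the exactness of $[\omega]$ forced by that equation when one unwinds --- in fact it is not forced, and the literature sometimes includes $[\omega]\in H^{n+1}_{dR}(M)$ as an additional obstruction, so be alert to whether the $i=1,\ldots,n{+}1$ range in the theorem is intended to subsume this or not.
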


\begin{example}[Symplectic]
	Let $(M,\omega)$ be symplectic and $\zeta:\mathfrak g\to \mathfrak X(M)$ a Lie algebra homomorphism. Then the above definition collapses to the classical notion of (equivariant) comoment map. I.e. a multisymplectic comoment is a Lie algebra homomorphism $f=f_1:\mathfrak g\to C^\infty(M)$ satisfying $X_{f(\xi)}=\zeta(\xi)$ for all $\xi\in\mathfrak g$. A necessary condition for the existence such of a comoment is the $\zeta$-invaricance of $\omega$. In such cases the sufficient condition is given by the classes $g_1=(\xi\mapsto \iota_{\zeta(\xi)}\omega)\in H^1(\mathfrak g)\otimes H^1_{dR}(M)$ and $g_2=((\xi_1,\xi_2)\mapsto \iota_{\zeta(\xi_2)}\iota_{\zeta(\xi_1)}\omega)\in H^2(\mathfrak g)\otimes H^0_{dR}(M)$. If $g_1$ vanishes, then a linear (not necessarily equivariant) comoment exists and $g_2$ is the obstruction against equivariance (compare \cite{MR0464312}). 
\end{example}

\begin{example}[Sums and products]
	Let $(M,\omega)$ and $(\tilde M,\omega)$ be multisymplectic manifolds and $\zeta:\mathfrak g\to X(M)$ and $\tilde\zeta:\tilde {\mathfrak g}\to\mathfrak X(M)$ be Lie algebra homomorphisms. Then there is an induced Lie algebra homomorphism $(\zeta, \tilde\zeta):\mathfrak g\oplus \tilde {\mathfrak g}\to \mathfrak X(M\times \tilde M)$. 
\end{example}

\begin{example}[Multicotangent bundles]
Let $G$ be a Lie group and $\vartheta^Q:Q\times G\to  Q$ a right action. For each $g$ the map $\vartheta^Q_g:Q\to Q$ is a diffeomorphism. Then $T\vartheta^Q_g:TQ\to TQ$ is a fiberwise linear diffeomorphism, which makes the following diagram commute:
\begin{align*}
\begin{xy}
\xymatrix{ TQ\ar[d]\ar[r]^{T\vartheta^Q_g}&TQ\ar[d]\\
	Q\ar[r]^{\vartheta^Q_g}&Q
}
\end{xy}
\end{align*}
With the map $T\vartheta^Q_g$ at hand we construct a diffeomorphism $\Lambda^nT^*(\vartheta^Q_g):\Lambda^nT^*Q\to \Lambda^nT^*Q$. Let $\alpha$ be an element of $\Lambda^nT^*Q$ with $\pi(\alpha)=p\in Q$ and $v_1,...,v_n\in T_{\vartheta^Q_gp}Q$.
\begin{align*}
(\Lambda^nT^*(\vartheta^Q_g))(\alpha)(v_1,...,v_n)=\alpha((T_p\vartheta^Q_g)^{-1}v_1,...,(T_p\vartheta^Q_g)^{-1}v_n),
\end{align*}

where $(T_p\vartheta^Q_g)^{-1}:T_{\vartheta^Q_gp}Q\to T_pQ$ is the inverse of the linear map $T_p\vartheta^Q_g:T_pQ\to T_{\vartheta^Q_gp}Q$. Then $\vartheta^M_g:=\Lambda^nT^*(\vartheta^Q_g)$ defines a right action which makes the following diagram commute:

\begin{align*}
\begin{xy}
\xymatrix{ \Lambda^nT^*Q\ar[d]^{\pi}\ar[r]^{\vartheta^M_g}&\Lambda^nT^*Q\ar[d]^{\pi}\\
	Q\ar[r]^{\vartheta^Q_g}&Q
}
\end{xy}
\end{align*}
Thus we have a right action $\vartheta^M$ of $G$ on $M=\Lambda^nT^*Q$ for $1\leq n\leq \mathrm{dim}(Q)$. To see that the action is n-plectic and even strongly Hamiltonian with respect to the canonical n-plectic structure $\omega$ it suffices to show that the n-form $\theta$ is $G$-invariant. Regard $\alpha \in \Lambda^nT^*Q$ and $v_1,...,v_n\in T_\alpha(\Lambda^nT^*Q)$,
\begin{align*}
((\vartheta^M_g)^*\theta)_\alpha(v_1,...,v_n)&=\theta_{\vartheta^M_g\alpha}((T\vartheta^M_g)v_1,...,(T\vartheta^M_g)v_n)\\
&=\vartheta^M_g(\alpha)((T\pi)(T\vartheta^M_g)v_1,...,(T\pi)(T\vartheta^M_g)v_n)\\
&=\vartheta^M_g(\alpha)((T(\pi\circ \vartheta^M_g))v_1,...,(T(\pi\circ\vartheta^M_g))v_n)\\
&=\vartheta^M_g(\alpha)((T(\vartheta^Q_g\circ \pi))v_1,...,(T(\vartheta^Q_g\circ \pi))v_n)\\
&=\alpha((T\vartheta^Q_g)^{-1}(T(\vartheta^Q_g\circ \pi))v_1,...,(T\vartheta^Q_g)^{-1}(T(\vartheta^Q_g\circ \pi))v_n)\\
&=\alpha((T\pi)v_1,...,(T\pi)v_n)\\
&=\theta_\alpha(v_1,...,v_n).
\end{align*}
Thus $(\vartheta^M_g)^*\theta=\theta$ and thus $\omega$ is $G$-invariant (especially $\mathfrak g$-invariant) with an invariant potential. Theorem \ref{comthm} now implies that the action is strongly Hamiltonian with homotopy co-moment map defined via the $G$-invariant potential  $\eta=-\theta$ of $\omega$.
\end{example}

\begin{example}[Subbundles of multicotangent bundles, cf. \cite{MR1244450}]
	Let $V$ be an involutive subbundle of $TQ$ and $\vartheta^Q:Q\times G\to Q$ a right-action preserving $V$, i.e. $T\vartheta_g^Q(V)=V$ for all $g\in G$. Then, for $1\leq i\leq \mathrm{rank}(V)\leq n\leq \mathrm{dim}(Q) $, $\vartheta^M$ preserves the subbundle $\Lambda^n_iT^*Q$, and thus defines a multisymplectic action on it. This action inherits the comoment from $M=\Lambda^nT^*Q$.
\end{example}

\begin{example}[Simple real Lie groups]
Recall from Theorem 21.1. in \cite{MR0024908} that for a semi-simple Lie algebra $\mathfrak g$ we have $H^1(\mathfrak g)=0=H^2(\mathfrak g)=0$ and $0\neq [\omega_e]_{CE}=[\langle[\cdot,\cdot],\cdot\rangle]_{CE}\in H^3(\mathfrak g)$, where $\langle\cdot,\cdot\rangle$ again denotes the Killing form of $\mathfrak g$.

Assume the real connected simple Lie group $G$ acts on itself from the right by $(g,x)\mapsto x\cdot g$. The corresponding infinitesimal action $\zeta$ extends a $\xi \in\mathfrak g$ to a left-invariant vector field $\zeta(\xi)=\xi^l$. This action preserves the multisymplectic structure $\omega$ on $G$. Since $\omega$ is bi-invariant we obtain:
\begin{align*}
\omega(\zeta(\xi_1),\zeta(\xi_2),\zeta(\xi_3))=\omega_e(\xi_1,\xi_2,\xi_3).
\end{align*}
Thus $g_3=[\omega_e]_{CE}\in H^3(\mathfrak g)= H^3(\mathfrak g)\otimes H^0_{dR}(G)$ does not vanish and therefore $\zeta$ can not admit a comoment map.\\

On the other hand, the conjugation right-action $c:G\times G\to G,~c_g(x)=g^{-1}xg$ does admit a comoment map, cf. \cite{MR3552542}.
\end{example}

\subsection{Conserved quantities}\label{obs3}
On a symplectic manifold $(M,\omega)$, given a Hamiltonian function $H\in C^\infty(M)$, a conserved quantity is a function $f\in C^\infty(M)$ satisfying $L_{X_H}f=0$. This construction has been generalised to multisymplectic manifolds in the following manner (\cite{1610.05592}):

\begin{definition}
	Let $(M,\omega)$ be $n$-plectic and $H\in \Omega^{n{-}1}_{Ham}(M,\omega)$. We will call $H$ a ``Hamiltonian form'' in the sequel. A differential form $\alpha\in L_\infty(M,\omega)$ is called a ``conserved quantity'' if $L_{X_H}\alpha$ is exact. It is called ``locally conserved'', if $L_{X_H}\alpha$ is closed and ``strictly conserved'', when $L_{X_H}\alpha=0$. 
\end{definition}
\begin{remark}
	Of course, to define conservedness it suffices if $\alpha$ is any differential form, it does need not to be an element of $ L_\infty(M,\omega)$. However, we will focus on conserved quantities, which are also elements of the $L_\infty$-algebra.
\end{remark}
\begin{example}[Symplectic manifolds] In the case of symplectic manifolds conserved quantities and strictly conserved quantities coincide. $L_{X_H}f$ is exact if and only if it is zero. Locally conserved quantites are functions $f$ such that $L_{X_H}f=-\{H,f\}$ is constant (cf. eg. \cite{MR0289078}). 
\end{example}

\begin{theorem}[\cite{1610.05592}] Let $(M,\omega)$ be $n$-plectic and $H\in \Omega^{n{-}1}_{Ham}(M,\omega)$.
	\begin{enumerate}
		\item Let $\alpha_1,...,\alpha_k$ be locally conserved. Then $l_k(\alpha_1,...,\alpha_k)$ is strictly conserved.
		\item Let $\zeta:\mathfrak g\to \mathfrak{X}(M)$ be an action admitting a comoment $\{f_k\}$. If $L_{\zeta(\xi)}H$ is closed for all $\xi\in\mathfrak g$, then the image of $\partial f_k:\Lambda^{k+1}\mathfrak g\to \Omega^{n-k}(M)$ consists of conserved quantities for $k=1,...,n$, where $\partial f_k$ is defined analogously to $\partial l_k$ in Remark \ref{linfty-properties}.
		\item Let $\Sigma$ be an $d$-dimensional closed manifold, $\sigma_0:\Sigma\to M$ smooth and $\sigma_t=\phi^{X_H}_t\circ \sigma_0$, where $\phi^{X_H}$ is the flow of $X_H$. Then, for a conserved quantity $\alpha\in \Omega^d(M)$, the value of the following integral is independent of $t$: 
		\[\int_{\Sigma}\sigma_t^*\alpha.\]
	\end{enumerate}
\end{theorem}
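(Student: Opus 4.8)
The plan is to handle the three assertions in turn, after isolating a short computational lemma that powers the first two. Throughout I write $X:=X_H$ and record at once that $X$ is multisymplectic: $L_X\omega=d\iota_X\omega+\iota_Xd\omega=-d^2H=0$.

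\emph{Step 0: a criterion for local conservation.} I would first note that for a Hamiltonian $(n{-}1)$-form $\alpha$ with vector field $X_\alpha$, the Cartan identity $L_X\iota_{X_\alpha}-\iota_{X_\alpha}L_X=\iota_{[X,X_\alpha]}$, together with $d\alpha=-\iota_{X_\alpha}\omega$ and $L_X\omega=0$, gives
\[
d(L_X\alpha)=L_X(d\alpha)=-L_X(\iota_{X_\alpha}\omega)=-\iota_{[X,X_\alpha]}\omega .
\]
By non-degeneracy of $\omega$ this means: $\alpha$ is locally conserved (i.e. $L_X\alpha$ closed) if and only if $[X,X_\alpha]=0$. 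This equivalence is the engine of Steps 1 and 2.

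\emph{Step 1: Assertion 1.} Since $l_k$ vanishes on forms of degree $<n-1$, only the case $\alpha_1,\dots,\alpha_k\in\Omega^{n-1}_{Ham}(M,\omega)$ is non-trivial, and then $[X,X_{\alpha_i}]=0$ for every $i$ by Step 0. Writing $l_k(\alpha_1,\dots,\alpha_k)$ as a sign times $\iota_{X_{\alpha_k}}\cdots\iota_{X_{\alpha_1}}\omega$ and pushing $L_X$ through the contractions with the Cartan identity, each term either contains some $\iota_{[X,X_{\alpha_i}]}$ or ends in $L_X\omega$, hence vanishes; so $L_X\,l_k(\alpha_1,\dots,\alpha_k)=0$, which is strict conservation.

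\emph{Step 2: Assertion 2.} This splits into two sub-steps. (a) I would show $[X,\zeta(\xi)]=0$ for all $\xi\in\mathfrak g$. Since $\zeta(\xi)$ is the Hamiltonian vector field of $f_1(\xi)$, by Step 0 it suffices to check that $L_Xf_1(\xi)$ is closed. Using the Cartan-type identity $L_{X_\alpha}\beta=d(\iota_{X_\alpha}\beta)+l_2(\alpha,\beta)$ for Hamiltonian forms, applied to $(\alpha,\beta)=(H,f_1(\xi))$ and to $(\alpha,\beta)=(f_1(\xi),H)$, and the skew-symmetry $l_2(H,f_1(\xi))=-l_2(f_1(\xi),H)$, one obtains $L_Xf_1(\xi)=-L_{\zeta(\xi)}H$ modulo exact forms; this is closed because $L_{\zeta(\xi)}H$ is closed by hypothesis, whence $[X,\zeta(\xi)]=0$. (b) Then I would invoke the comoment relations $\partial f_k=-f_1^*l_{k+1}-l_1f_{k+1}$ for $k\in\{1,\dots,n\}$ (with $f_{n+1}=0$, and $l_1f_{k+1}=df_{k+1}$ for $k<n$ since $f_{k+1}$ has form-degree $n-k-1\le n-2$), and apply $L_X$. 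The $l_1f_{k+1}$-term yields the exact form $d(L_Xf_{k+1})$ (or $0$); the term $L_X\bigl(l_{k+1}(f_1(\xi_1),\dots,f_1(\xi_{k+1}))\bigr)=L_X\bigl(\pm\iota_{\zeta(\xi_{k+1})}\cdots\iota_{\zeta(\xi_1)}\omega\bigr)$ vanishes by the Step 1 computation applied to the commuting fields $\zeta(\xi_i)$. Hence $L_X\bigl((\partial f_k)(\xi_1,\dots,\xi_{k+1})\bigr)$ is exact for all $\xi_i$, i.e. $\operatorname{im}(\partial f_k)$ consists of conserved quantities.

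\emph{Step 3: Assertion 3.} Let $\phi_t$ be the flow of $X_H$, which exists for $t$ in an interval around $0$ on a neighbourhood of the compact set $\sigma_0(\Sigma)$, so $\sigma_t=\phi_t\circ\sigma_0$ is defined. Differentiating under the integral sign and using $\tfrac{d}{dt}\phi_t^*\alpha=\phi_t^*(L_{X_H}\alpha)$ together with $\sigma_t^*=\sigma_0^*\circ\phi_t^*$,
\[
\frac{d}{dt}\int_\Sigma\sigma_t^*\alpha=\int_\Sigma\sigma_0^*\Bigl(\frac{d}{dt}\phi_t^*\alpha\Bigr)=\int_\Sigma\sigma_t^*(L_{X_H}\alpha).
\]
Since $\alpha$ is conserved, $L_{X_H}\alpha=d\beta$ for some $\beta\in\Omega^{d-1}(M)$, so $\sigma_t^*(L_{X_H}\alpha)=d(\sigma_t^*\beta)$, and Stokes' theorem on the closed manifold $\Sigma$ makes this integral vanish. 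Hence $\int_\Sigma\sigma_t^*\alpha$ is independent of $t$ on its interval of definition.

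\emph{Main obstacle.} The real work is in Step 2(a): extracting the bracket identity $[X_H,\zeta(\xi)]=0$ from the comparatively weak hypothesis that $L_{\zeta(\xi)}H$ is merely \emph{closed} (not zero), which forces careful bookkeeping of the sign and normalization conventions in the definitions of the $l_k$ and of the comoment $\{f_k\}$. Once that commutation is secured, Steps 1 and 3 are short applications of Cartan calculus and of Stokes' theorem, respectively.
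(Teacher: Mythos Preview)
The paper does not actually prove this theorem: as the authors state at the start of Section~6, the results of Subsection~\ref{obs3} ``can be found on the arXiv preprint server and will be published with full details elsewhere. Thus we do not give proofs here.'' So there is no proof in the paper to compare against; your argument fills in what the paper deliberately omits.

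Your proof is correct and well organized. The key insight you isolate in Step~0---that for $\alpha\in\Omega^{n-1}_{Ham}$ one has $d(L_{X_H}\alpha)=-\iota_{[X_H,X_\alpha]}\omega$, whence local conservation of $\alpha$ is equivalent to $[X_H,X_\alpha]=0$---is exactly the right engine, and your identification of Step~2(a) as the crux is accurate. The identity $L_{X_\alpha}\beta=d(\iota_{X_\alpha}\beta)+l_2(\alpha,\beta)$ you use there checks out against the paper's sign convention $l_2(\alpha,\beta)=\iota_{X_\beta}\iota_{X_\alpha}\omega$, and adding the two instances gives $L_{X_H}f_1(\xi)+L_{\zeta(\xi)}H=d\bigl(\iota_{X_H}f_1(\xi)+\iota_{\zeta(\xi)}H\bigr)$, so closedness of $L_{\zeta(\xi)}H$ indeed forces closedness of $L_{X_H}f_1(\xi)$ and hence $[X_H,\zeta(\xi)]=0$. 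Steps~1, 2(b), and~3 then follow as you say. One minor comment: in Step~3 you might note that completeness of $X_H$ is not assumed, so the statement should be read as holding on the interval of existence of $\sigma_t$---which you do say, but it is worth emphasizing since the paper's phrasing is slightly informal on this point.
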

\begin{remark}
	In \cite{1610.05592} the above statements are refined and analysed in more detail. For instance, 2. can be interpreted as the following statement in terms of Lie algebra homology: Let $\Lambda^\bullet \mathfrak g$ be the exterior algebra over $\mathfrak g$ and $\delta:\Lambda^{k} \mathfrak g\to \Lambda^{k-1} \mathfrak g$ the differential $\delta$ be given on generators by 
	\[
	\delta_k(\xi_1\wedge ...\wedge \xi_{k})=\sum_{i<j} (-1)^{i+j}[\xi_i,\xi_j]\wedge \xi_1\wedge ...\wedge \widehat{\xi_i}\wedge ...\wedge \widehat{\xi_j}\wedge ...\wedge \xi_{k},
	\]
	where the hat symbol indicates omission. The above statement means, that $f_k$ maps $Im(\delta_{k+1})$ to conserved quantities. It does, however, also map $ker(\delta_k)$ to locally conserved quantities. A sufficient condition for $f_k(ker(\delta_k))$ to be conserved is $L_{\zeta(\xi)}H=0$ for all $\xi\in\mathfrak g$.
\end{remark}

\bibliographystyle{halpha}
\bibliography{sample}

\end{document}